%
%
%
\documentclass[12pt, a4]{amsart}

\usepackage{fancyhdr}
\usepackage[all]{xy}
\usepackage{graphicx}
\usepackage{amsmath}
\usepackage{amsthm}
\usepackage{amssymb}

\usepackage{latexsym,amsfonts,amscd,yhmath,epsfig,epic}



\usepackage{ccaption}
\changecaptionwidth
\captionwidth{5.6in}

%
%
%
%
%

\usepackage{imakeidx}
\makeindex            
%
%


\newcommand{\del}{{\delta}}
\newcommand{\PP}{{\mathbb{P}}}
\newcommand{\conv}{\operatorname{Conv}}\newcommand{\Pic}{\operatorname{Pic}}
\newcommand{\Iso}{\operatorname{Iso}}
\newcommand{\Int}{\operatorname{Int}}
\newcommand{\Aut}{\operatorname{Aut}}
\newcommand{\R}{\mathbb{R}}
\newcommand{\CC}{{\mathbb C}}  
\renewcommand{\le}{\leq} 

\newcommand{\ko}{\mathcal{O}}
\newcommand{\kf}{\mathcal{F}}
\newcommand{\Sing}{\operatorname{Sing}}
\newcommand{\mt}{\operatorname{mt}}
\newcommand{\bd}{\boldsymbol{d}}
\newcommand{\bq}{\boldsymbol{q}}
\newcommand{\Lam}{\Lambda}
\newcommand{\Sig}{\Sigma}
\newcommand{\eps}{\varepsilon}
\newcommand{\Z}{\mathbb{Z}}

\newtheorem{definition}{Definition}[section]
\newtheorem{theorem}[definition]{Theorem}
\newtheorem{remark}[definition]{Remark}

\newtheorem{conjecture}[definition]{Conjecture}

\newtheorem{proof*}{Proof}
\newtheorem{lemma}[definition]{Lemma}

\makeindex             


\begin{document}

\title{Plane algebraic curves with prescribed singularities}
\author{Gert-Martin Greuel and Eugenii Shustin}
%
%
\maketitle



\begin {abstract} We report on the problem of  the existence of complex and real algebraic curves in the plane with prescribed singularities up to analytic and topological equivalence. The question is whether, for a given positive integer $d$ and a finite number of  given analytic or topological singularity types, there exist a plane (irreducible) curve of degree $d$ having singular points of the given type as its only singularities. The set of all such curves is a quasi-projective variety, which we call an equisingular family $ESF$.
We describe, in terms of numerical invariants of the curves and their singularities, the state of the art concerning necessary and sufficient conditions for the non-emptiness and $T$-smoothness (i.e., smooth of expected dimension) of the corresponding $ESF$.
The considered singularities can be arbitrary, but we spend special attention to plane curves with nodes and cusps, the most studied case, where still no complete answer is known in general.
An important result is, however, that the necessary and the sufficient conditions show the same asymptotics for $T$-smooth equisingular families if the degree goes to infinity.\\
\end{abstract}

{\footnotesize
MSC2020 subject classification: 14-02, 14B, 14D, 14H, 14J,14P, 32-02, 32B, 32C, 32G, 32S

Keywords: Plane algebraic curves, singularities, deformations, equisingular families, genus bound, Pl\"ucker formula, Miyaoka-Yau inequality, spectral bound, existence problem, $T$-smoothness problem}

\tableofcontents

\section*{Introduction}

\label{sec:1}

%
%
%

Singular algebraic curves, their existence, deformation, families (from the
local and global point of view) attract continuous attention of algebraic
geometers since the last century. The aim of this survey is to give an account
of results, trends and bibliography related to the existence of curves with prescribed singularities with a focus on algebraic curves in the plane.
We consider the existence problem for complex and real plane curves with given singularities up to  analytic and topological equvalence.
The general problem is: given an integer $d>0$ and
analytic or topological singularity types $S_1,\dots,S_r$, does there
exist a curve (resp. an irreducible curve) of degree $d$ in $\PP^2$ having $r$ singular points of
types $S_1,\dots,S_r$, respectively, as its only singularities?

An important particular case is the same problem for one singularity.
Namely, let $S$ be an analytic or topological type.
What is the minimal degree $d(S)$ of a curve in $\PP^2$ having a singular point of
type $S$\,? In other words, we ask about
a polynomial normal form of minimal
degree of the given singularity.

%

The space  $|dH| = |H^0(\PP^2,\ko_{\PP^2}(d))|$ of all curves of degree $d$ in $\PP^2$,
$H$ a hyperplane in $\PP^2$,  can be identified with the punctured vector space of homogeneous polynomials of degree $d$ in $3$ variables modulo multiplication with a non-zero constant.
That is, $|dH| = \CC [x_0,x_1,x_2]_d \smallsetminus \{0\}/ \mathbb C^*$
is a projective space of dimension $N=(d^2+3d)/2$. The subspace of this $\PP^N$, consisting of (irreducible) curves of degree $d$ in $\PP^2$ having $r$ singular points of types $S_1,\dots,S_r$ (and may be other not specified singularities) is the {\em Equisingular Family $(ESF)$} which we denote by
$$V_d^{(irr)} (S_1,\dots,S_r)$$
(it may be empty). This description of $V_d^{(irr)} (S_1,\dots,S_r)$  is set-theoretically, but it is shown in \cite{GLS5}  that these sets  are quasi-projective subvarieties of $\PP^N$ (see \cite[Proposition I 1.61 and Proposition I 1.71]{GLS5} for a simple proof in the case of one singularity), which can be endowed with a unique (not necessarily reduced) scheme structure representing the functor of equianalytic resp. equisingular deformations (see \cite[Theorem II 2.36]{GLS5}).\\

The following geometric problems concerning equisingular families  of plane
curves have been of interest to algebraic geometers since the early 20th century:

\begin{itemize}
\item
{\em Existence Problem}:\index{existence!problem}\index{problem!existence} Is $V_d^{(irr)}(S_1,\dots,S_r)$
non-empty?

\noindent
\item
{\em $T$-Smoothness Problem}:\index{T-smoothness!problem}\index{problem!T-smoothness}
If $V_d^{(irr)}(S_1,\dots,S_r)$ is non-empty, is it $T$-smooth, i.e.
smooth and of the "expected" dimension (see end of the Preliminaries)?


\noindent
\item
{\em Irreducibility Problem}: \index{irreducibility problem}\index{problem!irreducibility} Is  $ V_d^{(irr)}(S_1,\dots,S_r)$ irreducible?

\noindent
\item
{\em Deformation Problem}: \index{deformation problem}\index{problem!deformation}
What are the adjacencies of the singularities of a curve of degree $d$ if it varies inside $|dH|$?
\end{itemize}

First of all, a {\em complete} answer to these questions
is known only for the case of plane nodal curves
(Severi \cite{Se}, Harris \cite{HJ}): the inequality
$0\le n\le\frac{(d-1)(d-2)}{2}$ is necessary and sufficient for
the nonemptiness, $T$-smoothness, and irreducibility of the family  $ V_d^{irr}(nA_1)$
of irreducible plane curves of degree $d$ with $n$ nodes as their
only singularities, and, additionally, for the independent smoothing
of prescribed nodes while keeping the others, induced by the space of plane curves of degree $d$.

Already for plane curves with ordinary cusps a reasonable complete
answer is hardly possible, due to a large gap between the known upper
bounds of the number of cusps and the known examples of curves with many
cusps. Due to the irregular behavior of such examples, it seems unrealistic
to expect a sufficient condition for either non-emptiness,
or $T$-smoothness, or irreducibility, which is at the same time necessary
(as in the case of plane nodal curves).\\

This situation has motivated us to pursue the following goal:
describe the {\em regular region} \index{regular!region} of $V_d^{(irr)}(S_1,\dots,S_r)$ (i.e. the  nonempty and $T$-smooth part), in a possibly precise form, which should be
\begin{enumerate}\item[(i)] {\em universal}, i.e. applicable to arbitrary
singularities, \item[(ii)] {\em numerical}, i.e. expressed as
relations (inequalities) for numerical invariants of the curves and their singularities, \item[(iii)] {\em
asymptotically optimal} or {\em asymptotically proper}, i.e. having
either the same asymptotics or are asymptotically ``comparable" with the known
examples of irregular (empty or non-$T$-smooth) equisingular families if $d$ goes to infinity.\end{enumerate}
We like to emphasize that the one can expect asymptotically optimal or  asymptotically proper results (about
nonemptiness, $T$-smoothness, irreducibility, ...) only for  the regular region, we do not see any systematic behavior for the irregular region of $V_d^{(irr)}(S_1,\dots,S_r)$ if $d \to \infty$.

In this survey we focus mainly on the existence problem and give only a short account on answers to the other problems.
We give always precise references, including original sources and in addition hints to the methods whenever appropriate. We feature both complex and real singular curves. A special attention is paid to curves with nodes and cusps, curves with simple, ordinary, and semi-quasihomogeneous singularities, in which cases one can apply specific constructions and formulate general restrictions in a simpler form.

In general, there is only one universal approach which provides sufficient existence results for arbitrary topological and analytical singularity types and any degree, both over the complex and over the real fields, and which is asymptotically comparable with the necessary conditions. This approach combines two main ingredients:
the {\em theory of zero-dimensional schemes} related to planar curve singularities and the cohomology vanishing theory for their ideal sheaves, and the {\em patchworking construction}\index{patchworking!construction}\index{construction!patchworking}. While the cohomological approach, which builds a bridge between the local and global geometry of singular algebraic curves, is not treated in this survey, we explain the parchworking method in several interesting situations.
Furthermore, we mention important results on the existence of curves with nodal singularities on other algebraic surfaces and in the projective space, and address several related problems.

For a comprehensive treatment of these problems  and detailed proofs, and more generally of the theory of topologically and analytically equisingular families of curves on surfaces, see the monograph \cite{GLS3}. In the present survey, we basically follow the main concept of the monograph \cite{GLS3} providing more details in certain places, for instance, in Section \ref{ssec:3.1} as well as in Section \ref{ssec:3.2}, where Theorems \ref{t-ordinary1}, \ref{t-sqh} and \ref{t-squ1} are new.\\

\subsection*{Preliminaries: isolated singularities} \label{prelim}

We work
mainly with algebraic varieties (not necessarily reduced ore irreducible) but use the Euclidean topology and analytic structure sheaf  (unless otherwise stated), called {\em algebraic complex spaces}\index{algebraic!complex space} (see \cite[Notations and Conventions]{GLS3} for a precise definition). An {\em algebraic curve}\index{algebraic!curve} resp. {\em algebraic surface}\index{algebraic!surface} means an algebraic complex space of pure dimension one resp. two.
By a {\em real algebraic variety}\index{real!algebraic variety}\index{variety!real algebraic} resp. {\em real analytic variety}\index{real!analytic variety}\index{variety!real analytic}  we mean an algebraic resp. analytic variety equipped with an anti-holomorphic involution. By a {\em hypersurface}\index{hypersurface} we mean an effective Cartier divisor in a smooth variety $\Sig$.

A {\em singularity}\index{singularity} is by definition the germ
$(X,z)$ of a complex space, may be smooth. A singularity $(X,z)$ is {\em isolated} \index{isolated singularity}\index{singularity!isolated} if $X\setminus \{z\}$ is smooth for some representative $X$. Two hypersurface singularities $(X,z)\subset(\Sigma,z)$ and
$(X',z')\subset(\Sigma,z')$ are called {\it analytically equivalent}\index{analytically!equivalent}\index{equivalent!analytically}
(resp. {\it topologically equivalent}\index{topologically!equivalent}\index{equivalent!topologically}) if there exists an analytic isomorphism (resp. a homeomorphism) of  neighborhoods of $z$ resp. $z'$ in
$\Sigma$ mapping $(X,z)$ to $(X',z')$.

The analytic equivalence can be expressed as an isomorphism
of the analytic local rings:
$\ko_{X,z}\cong{\ko}_{X',z'}.$
The topological equivalence is used in this paper only for reduced plane curve singularities where it is
completely characterized by discrete invariants (see \cite{Za1}, \cite{Wa2}, \cite{Te}, \cite{BK},  \cite{GLS5}):
Namely, two reduced plane curve singularities $(C,z)$ and $(C',z')$ are {\it topologically equivalent}\index{topologically!equivalent} \index{equivalent!topologically} iff there exists a bijection of local branches such that the Puiseux pairs of the corresponding branches coincide, as well as the pairwise intersection multiplicities of the corresponding branches; equivalently if they have embedded resolutions by
blowing up points such that the systems of multiplicities of the reduced total transforms coincide.
  The second definition is the preferred one since it generalizes to deformations over non-reduced base spaces.

Analytic resp. topological equivalence classes of isolated singular
points are called {\em (contact) analytic types}\index{analytic!type}\index{type!analytic} resp. {\em topological types}\index{topological!type}\index{type!topological} (or  {\em analytic}\index{analytic!singularity}\index{singularity!analytic} resp. {\em topological} {\em singularities})\index{topological!singularity}\index{singularity!topological}. For {\em simple} or {\em ADE} singularities (cf. \cite{GLS5}) analytic and topological types coincide and we talk simply about their type. Of particular interest are the simple singularities of type $A_1$, called {\em nodes} \index{node} , given in local analytic coordinates as $x^2+y^2=0$ and of  type $A_2$, called {\em (ordinary) cusps} \index{cusp}, given as $x^2+y^3=0.$

Important numerical invariants are the Milnor number, the delta invariant and the kappa-invariant.
Let $(X,z)\subset (\Sig,z) \cong (\CC^n,0)$
be an isolated hypersurface singularity and
$f\in \CC\{x_1, \dots, x_n\} \cong \ko_{\Sigma,z}$ a defining power series in local coordinates $x_1, \dots, x_n$. Then
$$\mu(X,z):=\dim_\CC {\CC\{x_1, \dots, x_n\}}/\langle\frac{\partial
f}{\partial x_1},\dots,\frac{\partial f}{\partial x_n}\rangle$$
\index{$mu(X,z)$@$\mu(X,z)$}
is the {\em Milnor number}\index{Milnor nunber} of $(X,z)$  and
$$\tau(X,z):=\dim_\CC {\CC\{x_1, \dots, x_n\}}/\langle f, \frac{\partial
f}{\partial x_1},\dots,\frac{\partial f}{\partial x_n}\rangle
\index{$tau(X,z)$@$\tau(X,z)$}$$
the {\em Tjurina number}\index{Tjurina number} of $(X,z)$,
which is the dimension of the base space of the semiuniversal deformation of $(X,z)$.

For  a reduced curve singularity $(C,z)$ we call
$$\delta(C,z):=\dim_\CC(\nu_*{\ko}_{\overline C}/{\ko}_C)_z$$
the {\em delta-invariant ($\delta$-invariant)}\index{delta invariant@$\delta$-invariant} \index{$d(C,z)$@$\delta(C,z)$}  \index{delta!invariant}
of
$(C,z)$, where $\nu:\overline C\to C$ is the normalization of a small representative $C$ of $(C,z)$. Let $(C,z)$ be a reduced plane curve singularity defined by $f  \in \CC\{x, y\}$. The {\em kappa-invariant ($\kappa$-invariant)}
\index{kappa invariant@$\kappa$-invariant} \index{$k(C,z)$@$\kappa(C,z)$}  \index{kappa invariant}
of $(C,z)$ is the intersection multiplicity of $f$ with a generic polar, that is,
\begin{equation}\kappa(C,z) := \dim_\CC{\CC\{x, y\}}/\langle f,\alpha\frac{\partial f}{\partial x}+ \beta\frac{\partial f}{\partial y}\rangle,\label{kappa1}\end{equation}
with $(\alpha:\beta) \in \PP^1$ generic. We also write $\mu(f), \ \del(f)$ and $\kappa(f)$. Recall for a plane curve singularity $f$ the formulas (cf. \cite{Mi} and  \cite[Proposition I. 3.35 and Proposition I. 3.38]{GLS5})
\begin{eqnarray}
\mu(f) & = 2\del(f) - r(f) +1,\nonumber\\
\kappa(f) & = \mu(f) +\mt(f) -1,\nonumber
\end{eqnarray}
where $r(f)=r(C,z)$ is the {\em number of branches} of $(C,z)$ (irreducible factors of $f$) and $\mt(f)=\mt(C,z)$ the {\em multiplicity} of  $(C,z)$ (degree of lowest non-vanishing term of $f$).

We introduce further the {\em tau-es-invariant ($\tau^{es}$-invariant)}
$$\tau^{es}(C,z) := \tau(C,z) - \dim_\CC T^{1,es}(C,z)
= \dim_\CC \ko_{\Sig,z}/I^{es}(f), $$
\index{tau-es invariant@$\tau^{es}$-invariant} \index{$tes(C,z)$@$\tau^{es}(C,z)$} \index{tau-es invariant}with $I^{es}(f)$ the equisingularity ideal (cf. \cite[Definition 1.1.63]{GLS3}) and  $T^{1,es}(C,z)$ the tangent space to the equisingular stratum (= the $\mu$-constant stratum) $\Delta^\mu$ in the base of the semiuniversal deformation of $(C, z)$. Since $\Delta^\mu$
is smooth, $\tau^{es}(C,z)$ is equal to the codimension of the $\mu$-constant stratum in the ($\tau$-dimensional) base space of the semiuniversal deformation of $(C, z)$, which coincides with the  codimension of the $\mu$-constant stratum in the ($\mu$-dimensional) base space of the semiuniversal unfolding of $f$. We have
also (cf. \cite[Lemma 1.3]{GL})
$$ \tau^{es}(C,z) = \mu(C,z) - \text{ modality}(f), $$
where  {\em modality}($f$) is the  modality of the function $f$ with respect to right equivalence. Note that $\tau^{es}(C,z)$ can be effectively computed in terms of the resolution invariants of $(C,z)$), an algorithm is implemented in {\sc Singular} \cite{DGPS}. For details we refer to \cite[ Remark to Corollary II.2.71]{GLS5} and to \cite[Corollary 1.1.64]{GLS3}.

Now we  can explain more precisely the {\em $T$-smoothness property}.\index{T-smoothness} Let $S$ be
an analytic resp. topological singularity type of a plane curve singularity $(C,z)$.
The requirement that a curve of degree $d$ has a singularity of type $S$ imposes $\tau(S):=\tau(C,z)$ resp. $\tau^{es}(S):=\tau^{es}(C,z)$  conditions on the space of all curves of degree $d$ (cf. \cite{GK} for anaytic types and  \cite{GL} for toplogical types).
Let $S_1,...,S_q$ be analytic types and
$S_{q+1},...,S_r$ topolgical types of the degree $d$-curve $C\subset \PP^2$. Then we say that
$V_d^{(irr)} (S_1,\dots,S_r)$ has the {\em expected dimension}\index{dimension!expected}\index{expected dimension} at $C$ if its dimension at $C$ is
\begin{eqnarray}
\frac {d^2+3d}{2}
- \sum_{i=1}^q \tau(S_i) - \sum_{i=q+1}^r \tau^{es}(S_i), \label{expdim}
\end{eqnarray}
and $V_d^{(irr)} (S_1,\dots,S_r)$ is {\em $T$-smooth at $C$}\index{T-smooth} if it is smooth of expected dimension at $C$
(in particular, the number (\ref{expdim}) must be non-negative). We refer to  \cite[Corollary 6.3]{GK},  \cite[Theorem 3.6]{GL}, and \cite[Theorem 2.2.40]{GLS3} for this and for further properties of $V_d^{(irr)} (S_1,\dots,S_r)$.

\section{Singular plane curves: restrictions}  \label{sec.arb:restr}\index{restrictions!for existence}

Various restrictions for the existence of plane curves of degree $d$ with prescribed
singularities $S_1,\dots,S_r$ have been found. We recall the most important ones.

\subsection{Genus formula and B{\'e}zout's theorem}
First, one should mention the general classical bound\index{genus!bound}\index{bound!genus}
\begin{equation}
\label{genbound}
\sum_{i=1}^r \delta(S_i) \: \leq \:
\frac{(d\!\!\;-\!\!\;1)(d\!\!\;-\!\!\;2)}{2}\,,
\end{equation}
for the existence of an irreducible plane curve of degree d havings $r$ singularities of type $S_1, \dots, S_r$, which
results from the genus formula (\ref{genusformula}).

For a  reduced (not necessarily irreducible) plane curve we get
as necessary bound for the existence, i.e., for $V_d(S_1,\dots S_r) \neq \emptyset$,
the inequality\index{Bezout!bound}\index{bound!Bezout}
\begin{equation}
\label{mubound}
\sum_{i=1}^r \mu(S_i)  \: \leq \: (d\!\!\;-\!\!\;1)^2.
\end{equation}
This is a consequence of {\em B{\'e}zout's theorem} (see e.g. \cite[Theorem II. 1.16]{GLS3})\index{Bezout!theorem}:
\medskip

{\em Two plane projective curves $C,D \subset \PP^2$ of degrees $c$ and $d$, respectively, which have no component in common, intersect at $c\cdot d$
points, counting intersection multiplicities. That is,
 \begin{equation}
\label{bezout}
c\cdot d=\sum_{z\in C\cap D}\dim_\CC \,\ko_{\PP^n,z}/\langle f,g \rangle,
\end{equation}
with $f$ resp. $g$ being local equations of $C$ resp. $D$ at $z$.}
\medskip

To see (\ref{mubound}) let $C$ be given by a homogeneous polynomial $F \in \CC[x_0,x_1,x_2]$ of degree $d$ and let
$F'_\alpha=\sum_{i=0}^2\alpha_i \partial F/\partial x_i $ and
$F'_\beta = \sum_{i=0}^2\beta_i \partial F/\partial x_i$ with
$\alpha_i , \beta_i$ generic, be two generic
polars of $C$, both of degree $d-1$. The intersection points of $\{F'_\alpha = 0\}$ and $\{F'_\beta = 0\}$ include the singular points of $C$ and the intersection multiplicities are just the corresponding Milnor numbers. Thus, we get
(\ref{mubound}).

\bigskip

For the proof of (\ref{genbound}) let us deduce the genus formula. Recall that for an arbitrary projective scheme $X$ the {\em arithmetic genus} \index{arithmetic genus} \index{genus!arithmetic} is defined as
$$p_a(X) := (-1)^{\dim X} (\chi(\ko_X)-1).$$
Here, for any coherent sheaf $\kf$ on $X$,
$\chi(\kf) = \sum (-1)^i \dim_\CC H^i(X, \kf)$
 is the {\em (algebraic) Euler characteristic} \index{Euler characteristic!analytic} of $\kf$.
 \medskip

 For a curve $C$ we have $p_a(C) = 1- \chi(\ko_C) = 1- \dim_{\CC} H^0(C,\ko_C) + \dim_{\CC} H^1(C,\ko_C)$. If $C$ is reduced and connected, then we have  $H^0(C,\ko_C) =\CC$ and  hence we get for the arithmetic genus
$p_a(C) = \dim_{\CC} H^1(C,\ko_C) \geq 0.$  If $C$ has $s$ connected components $C_1,...,C_s$,  the additivity of the Euler characteristic implies
$p_a(C) = 1-s  + \dim_{\CC} H^1(C,\ko_C) =
1-s + \sum_{i=1}^sp_a(C_i)$, which may be negative for $s>1$.
\medskip

 The {\em geometric genus} \index{genus!geometric} of a reduced curve $C$ is defined as the arithmetic genus of the normalization $\overline C$ of $C$, hence \index{genus!formula}
 \begin{equation}
\label{geomgenus}
 g(C) := p_a(\overline C)  = p_a(C) - \delta(C),
\end{equation}
with $\delta(C) := \dim_\CC H^0(\nu_*\ko_{\overline C}/\ko_C)$ the {\em total delta invariant of C}\index{delta!invariant!total} and
$\nu: \overline C \to C$ the normalization map.
(\ref{geomgenus}) follows from applying $ \chi$ to the exact sequence
$$0\to \ko_C \to \nu_*\ko_{\overline C} \to \nu_*\ko_{\overline C}/\ko_C \to 0,$$
noting that $ \chi (\nu_*\ko_{\overline C} ) = \chi(\ko_{\overline C})$ and
$H^i(\nu_*\ko_{\overline C}/\ko_C) = 0$ for $i>0$. Moreover,
$\delta(C) = \sum_{z\in\Sing(C)}\!\del(C,z),$
with $\del(C,z)$ the local  delta invariant of $C$ at $z$.\index{delta!invariant!local}
For a smooth curve $C$  the arithmetic genus and the geometric genus coincide ($\delta(C) =0$).

\medskip
If $C$ is irreducible, then $\overline C$ is  connected and smooth and $g(C) = p_a(\overline C) = g(\overline C)\geq 0$.
If $C$ is a reduced curve with $s$ irreducuible components $C_1,...,C_s$, we have  \index{genus!formula}
\begin{equation}
\label{geomgenus_red}
g(C)  = 1-s + \sum_{i=1}^sg(C_i)
\end{equation}
and hence $g(C) +s-1 \geq 0.$ The general genus formulas (\ref{geomgenus}) and (\ref{geomgenus_red}) were first proved by Hironaka \cite[Theorem 2]{Hir} using the resolution of the singularities of $C$ (he defines $g(C)$ as $\sum_{i=1}^sg(C_i)$).

\medskip
 If $C \subset \PP^2$ is a  {\em plane curve} of degree $d >0$, then $C$ is connected (by Bezout's theorem) and we have
 \begin{equation}
\label{pa} \index{genus!arithmetic}
p_a(C) =\: \frac{(d\!\!\;-\!\!\;1)(d\!\!\;-\!\!\;2)}{2}.
\end{equation}
 This follows from the exact sequence
 $$0 \to \ko_{\PP^2}(-d) \to \ko_{\PP^2} \to \ko_C \to 0,$$
giving $1-p_a(C) = \chi(\ko_C) = \chi(\ko_{\PP^2}) - \chi(\ko_{\PP^2}(-d))
= 1- \chi(\ko_{\PP^2}(-d))$, and from
$\chi(\ko_{\PP^2}(-d)) = \dim_{\CC} H^2(\PP^2,\ko_{\PP^2}(-d))$ =
${d-1} \choose {2}$.
\medskip

Now, if $C \subset \PP^2$ is reduced and irreducible, then $\overline C$ is smooth and connected and the geometric genus $g(C) = g(\overline C)$ is non-negative. The formulas (\ref{geomgenus}) and  (\ref{pa}) imply the
{\em genus formula}\index{genus!formula}
\begin{equation}
\label{genusformula}
 g(C) =
\frac{(d\!\!\;-\!\!\;1)(d\!\!\;-\!\!\;2)}{2}\,- \delta(C).
\end{equation}
Since $g(C)$ is non-negative for an irreducible curve of degree $d$ we get the inequality (\ref{genbound}).
\medskip

Of course, B{\'e}zout's\index{Bezout!theorem} theorem leads to various further necessary
conditions for the existence of the curve $C$ such as, for instance
by considering a line through $2$ points or a conic through $5$ points,
$$ \max_{i\neq j} \,\bigl(\mt(S_i)+ \mt(S_j)\bigr) \:\leq\: d\,, \quad
 \max_{\#(I)=5} \:\sum_{i\in I} \mt(S_i) \: \leq \: 2d\,.
$$
Finally we mention the inequality
\begin{eqnarray}\index{bound!regular existence}\index{regular!existence}
 \sum_{i=1}^q \tau(S_i) + \sum_{i=q+1}^r \tau^{es}(S_i) \leq  \frac {d^2+3d}{2}
\label{regex}
\end{eqnarray}
for {\em regular existence}, that is, for the existence of a curve $C \subset \PP^2$ of degree $d$ with $q$ analytic singularities $S_1,...,S_q$ and $r-q$ topological singularities $S_{q+1},...,S_r$, such that
$V_d(S_1,\dots S_r)$ is $T$-smooth at $C$ (cf. (\ref{expdim}) and \cite[Corollary 6.3 (ii)]{GK}, \cite[Corollary 3.9]{GL}, \cite[Theorem 2.2.40]{GLS3}).

\subsection{Pl\"ucker formula}\index{Pl\"ucker!formula}

Besides the genus formula and B{\'e}zout's theorem the Pl\"ucker formula provide necessary bounds for the existence, which are often sharper. Let's deduce these formulas.

Let \mbox{$C\subset\PP^2$} be a reduced, irreducible curve of degree
\mbox{$d>1$}, given by a homogeneous polynomial \mbox{$F\in
  \CC[x_0,x_1,x_2]$}.
Denote by \mbox{$C^*\subset (\PP^2)^*$} its {\em dual
    curve}\index{dual!curve}, that is,
$$ C^*:=\left\{(a_0:a_1:a_2)\in (\PP^2)^* \:\left|\:
    \begin{array}{l}
V(a_0x_0+a_1x_1+a_2x_2) \text{ is tangent}\\
 \text{to $C$ at some point }p\in C
    \end{array}
\right.\right\}
$$
Here \mbox{$(\PP^2)^*$} is the {\em (dual) projective
  2-space}\index{dual!projective space},  whose points
\mbox{$(a_0:a_1:a_2)$} are in 1-1 correspondence with the lines
\mbox{$V(a_0x_0+a_1x_1+a_2x_2)\subset \PP^2$}.

We have a natural rational duality morphism
$\bd:C\dashrightarrow C^*$, mapping a (smooth) point $z$ of $C$ to its tangent at $z$:
Let $z\in C$ and $P$ an irreducible component of the germ $(C,z)$. In
local affine coordinates $x,y$ such that $z=(0,0)$ and the $x$-axis is tangent to $P$,
this component admits a parametrization
$$\begin{cases}&x=t^p\\ &y=\lambda t^q+O(t^{q+1})\end{cases}\quad 1\le p<q,\quad \lambda\ne0,\quad t\in(\CC,0)\ .$$
and the tangent lines to the points of $P$ are given by equations $y=b(t)x+c(t)$ with
\begin{equation}b(t)=\frac{\lambda q}{p}t^{q-p}+O(t^{q-p+1}),\quad
c(t)=\lambda t^q+O(t^{q+1}),\quad t\in(C,0)\ .\label{en-dual}\end{equation}
It follows that generically the duality morphism $\bd$ is 1-1, and hence birational.
Furthermore, $C^*$ is an irreducible projective curve\footnote{An equation $F^*$ for $C^*$ can be obtained
  as follows: let
$$ g(x_1,x_2) := a_0^d\cdot F\left(\frac{-(a_1x_1+a_2x_2)}{a_0},
  x_1,x_2,\right) \in \CC(a_0,a_1,a_2)[x_1,x_2]\,,$$
and compute the discriminant \mbox{$D\in
\CC[a_0,a_1,a_2]\setminus\{0\}$} of $g(1,x_2)$. $D$ is homogenous of
degree \mbox{$2d^2\!-d$} and is the product of $F^*$ with some
number of linear factors. Hence, factorizing $D$ and removing all
linear factors, we get an equation for $C^*$.} of degree
\mbox{$d^*\!>1$}. The degree $d^*$ of $C^*$
is classically called the {\em class of $C$}.\index{class!of a curve}

Let $C=V(F)$ with $F(x_0,x_1,x_2)$ a homogeneous polynomial of degree $d$.
At a smooth point $z\in C$, the coefficients of the tangent line are given by
$$a_0=\frac{\partial F}{\partial x_0},\quad a_1=\frac{\partial F}{\partial x_1},\quad a_2=\frac{\partial F}{\partial x_2}\ .$$
Let $(\CC,0) \ni t \mapsto z(t)= (x_o(t),x_1(t),x_2(t))$ parametrize the germ $(C,z)$. Since $F(z(t))\equiv0$, we have
$\frac{\partial F(z(t))}{dt} = \frac{\partial F}{\partial x_0}\cdot \frac{dx_0}{dt}+
\frac{\partial F}{\partial x_1}\cdot \frac{dx_1}{dt}+
\frac{\partial F}{\partial x_2}\cdot \frac{dx_2}{dt}=0$, {that is, with $a_i(t) = a_i(z(t))$,
\begin{equation}\label{par}
a_0(t)\cdot\frac{dx_0}{dt}+
a_1(t)\cdot\frac{dx_1}{dt}+a_2(t)\cdot\frac{dx_2}{dt}=0\ .
\end{equation}
Combining this with the Euler formula
$dF=x_0\partial F/\partial x_0 + x_1\partial F/\partial x_1 + x_2\partial F/\partial x_2$, hence
$a_0(t)x_0(t)+a_1(t)x_1(t)+a_2(t)x_2(t)=0$, we obtain
\begin{equation}\label{par-dual}
x_0(t)\cdot\frac{da_0}{dt}+x_1(t)\cdot\frac{da_1}{dt}+x_2(t)\cdot\frac{da_2}{dt}=0\ ,
\end{equation}
which is dual to (\ref{par}).
Thus, the dual to $C^*$ is the original curve $C$.

We call a tangent line $L$ to $C$ a {\em singular
  tangent\/}\index{singular!tangent}\index{tangent!singular}, if
\begin{quote}
\begin{enumerate}
\item[(a)]  either $L$ is tangent to $C$ at a singular point,
\item[(b)] or $L$ is tangent to $C$ at more than one point,
\item[(c)]  or $L$ intersects $C$ at a non-singular point with multiplicity
  $>2$.
\end{enumerate}
\end{quote}
The set of singular
tangents is finite, since the set $\Sing(C)$ is finite, and the conditions
(b) and (c) determine $L$ as a singular point of $C^*$ (cf. formula (\ref{en-dual})). Hence, there exists a point
\mbox{$\bq=(q_0\!:\!\!\:q_1\!:\!\!\:q_2)\in\PP^2\setminus C$} which
does not lie on any singular tangent. Denote by $\Lam_{\bq}$ the
pencil of lines through $q$. Recall that a line
\mbox{$L\in\Lam_{\bq}$} is tangent to $C$ at the (non-singular) point
\mbox{$z\in C$} iff $z$ lies also on the polar\index{polar!curve} curve
relative to $\bq$, that is, iff
$$F(z)\,=\,0 \,=\, q_0\frac{\partial F}{\partial x_0}(z)+
q_1\frac{\partial F}{\partial x_1}(z)+ q_2\frac{\partial F}{\partial
x_2}(z)\, .$$ We observe that \mbox{$d^*$} is the number of lines
\mbox{$L\in\Lam_{\bq}$} tangent to $C$ at non-singular points, and
that \mbox{$\{q_0\frac{\partial F}{\partial x_0}\!+q_1\frac{\partial
F}{\partial x_1} \!+q_2\frac{\partial F}{\partial x_2}=0\}$} is a
generic polar of $C$. Applying B\'ezout's  theorem
(\ref{bezout}) to the non-singular intersection points of $\{F=0\}$ with
a generic polar (which gives $d^*$ as total number) and the singular
intersection points (which gives the kappa-invariant $\kappa$ at each intersection point, we obtain the {\em first Pl\"ucker
formula}\index{Pl\"ucker!formula} (cf. (\ref{kappa1})):
\begin{equation}
d^*=d(d-1)-\sum_{z\in\Sing(C)}\kappa(C,z).\label{plucker}
\end{equation}
In particular, if $C$ has $n$ nodes and $k$ cusps as its only
singularities one gets
\begin{equation}
d^*=d(d-1)-2n-3k. \label{e304}
\end{equation}
\medskip

We derive now the Riemann-Hurwitz formula and give another proof of the genus formula.
Let \mbox{$\overline{C}\to C$} be the normalization
map. Then the {\em topological Euler characteristic}\index{Euler characteristic!topological}\index{topological!Euler characteristic} of  $C$ satisfies (using Mayer-Vietoris)
\begin{eqnarray}
\chi_{\text{top}}(C)&=& \chi_{\text{top}}\bigl(\overline{C}\bigr)-
\!\!\sum_{z\in\Sing(C)}\!\bigl(r(C,z)\!\!\:-\!\!\:1\bigr)
\nonumber\\
&=&
2-2g(C)-\!\!\sum_{z\in\Sing(C)}\!\bigl(r(C,z)\!\!\:-\!\!\:1\bigr)\,
,\label{e305}
\end{eqnarray}
where $r(C,z)$ is the number of irreducible branches of the germ
$(C,z)$. Besides, considering the projection of $C$ on some
straight line \mbox{$L_0\not\supset\{\bq\}$} from the point $\bq$
leads to the following version of the topological
{\em Riemann-Hurwitz formula}\index{Riemann-Hurwitz formula},
$$\chi_{\text{top}}(C)\,=\,d\cdot \chi_{\text{top}}(L)-d^*-
\!\!\sum_{z\in\Sing(C)}\!\bigl(\mt(C,z)\!\!\:-\!\!\:1\bigr)\, ,$$
since a line \mbox{$L\in\Lam_{\bq}$}, which is tangent to $C$ at a
non-singular point, meets $C$ at \mbox{$d\!\!\:-\!\!\:1$} points, and
a line \mbox{$L\in\Lam_{\bq}$} through a point
\mbox{$z\in\Sing(C)$} meets $C$ at
\mbox{$d\!\!\:-\!\!\:\mt(C,z)\!\!\:+\!\!\:1$} points.
Combining the last equation with (\ref{plucker}) and (\ref{e305}),
we come to the  genus formula (\ref{genusformula}), \index{genus!formula}
$$g(C)=\frac{(d-1)(d-2)}{2}-\!\!\sum_{z\in\Sing(C)}\!\del(C,z).$$
\medskip

We also mention the \emph{second Pl\"ucker formula}\index{Pl\"ucker!formula}: for any reduced plane curve of degree $d\ge3$ which does not contain lines as components, the following equality holds
\begin{equation}\sum_{z\in\Sing(C)}h(C,z)=2d(d-2)-\sum_{z\in C\setminus\Sing(C)}((C\cdot T_zC)_z-2),
\label{eq:hess}\end{equation}
where $(C\cdot T_zC)_z$ stands for the intersection number of the curve $C$ with its tangent line $T_zC$ at the point $z$. According to \cite{Sh14}, if $z\in\Sing(C)$, then
\begin{equation}h(C,z)=3\kappa(C,z)+\sum_{C'}((C'\cdot T_zC')_z-2\mt(C',z)),\label{eq:hesssh}\end{equation}
where $C'$ ranges over all local branches of $C$ at $z$ (i.e., irreducible components of the germ $(C,z)$).

 The second Pl\"ucker equation for a reduced plane curve of degree $d\ge3$ which does not contain lines as components and with $n$ nodes and $k$ cusps states\index{Pl\"ucker!formula}
\begin{equation}
    k^* = 3 d (d - 2 ) - 6n - 8k,
    \label{kdual}
\end{equation}
where $k^*$ is the number of cusps of $C^*$. This follows from (\ref{eq:hess}) and (\ref{eq:hesssh}): indeed, when there are no flexes at
the nodes, and at all smooth flexes we have a triple intersection with the tangent, then $h(A_1)=3\kappa(A_1)+0=6$, $h(A_2)=3\kappa(A_2)-1=8$.

\subsection{Miyaoka-Yau inequality}
By applying the log-Miyaoka inequality, F.\ Sakai \cite[Theorem A]{Sa}
obtained the necessary condition
\index{bound!Miyaoka-Yau}
$$\sum_{i=1}^r \mu(S_i) \: < \: \frac{2\nu}{2\nu+1} \cdot \left(d^2\! -\frac{3}{2}\!\: d \right)
$$
where $\nu$ denotes the maximum of the multiplicities $\mt (S_i)$, \mbox{$i=1,\dots,r$}. In \cite{Sa} further bounds for the total Milnor number are given. In particular, if \mbox{$S_1, \dots,S_r$} are $ADE$-singularities then
$$\sum_{i=1}^r \mu(S_i) \: < \: \left\{
\renewcommand{\arraystretch}{2.0}
\begin{array}{cl}
\displaystyle{
\frac{3}{4}\!\:d^2\! -\frac{3}{2} \!\: d +2} & \text{ if $d$ is
even}\,,\\
\displaystyle{\frac{3}{4}\!\:d^2\! - d +\frac{1}{4}} & \text{ if $d$
  is odd}\,,
\end{array}
\right.
$$
is necessary for the existence of a plane curve with $r$ singularities of types
\mbox{$S_1, \dots,S_r$}.

Applying the strengthened Bogomolov-Miyaoka-Yau inequality in the form of Miyaoka \cite{Miy} to the desingularized double covering of the plane ramified along a curve with simple singularities, i.e., $A_r$, $D_r$, $E_6$, $E_7$, $E_8$, Hirzebruch and Ivinskis \cite{HF,Iv} obtained the following bound for a reduced plane curve $C$ of an even degree $d\ge6$ having only simple singularities:
\begin{equation}\index{bound!Hirzebruch-Ivinskis}
\sum_{z\in\Sing(C)}m(C,z)\le\frac{d(5d-6)}{2}\ ,
\label{eq:HI}\end{equation}
where the invariant $m(C,z)$ can be computed as follows:
\begin{equation}m(A_r)=\frac{3r(r+2)}{r+1},\quad m(D_r)=\frac{3(4r^2-4r-9)}{4(r-2)},\label{eq:Iv1}\end{equation}
$$m(E_6)=\frac{167}{8},\quad m(E_7)=\frac{383}{16},\quad m(E_8)=\frac{1079}{40}.$$

Langer \cite[Theorem 1]{Lan} generalized the Bogomolov-Miyaoka-Yau inequality to orbifold
Euler numbers and obtained an upper bound to the number of simple singularities of curves on surfaces.
In particular (see \cite[Theorem 9.4.2
and formula (11.1.1)]{Lan}), for any reduced curves of degree $d\ge10$ with $n$ nodes and $k$ cusps it yields
the bound
\begin{equation}
(2-\alpha)\,n+\left(\frac{7}{2}-\frac{3}{2}\alpha-\frac{1}{24\alpha}\right)k\le\left(1-\frac{\alpha}{3}\right)d^2-d
\label{elan1}
\end{equation}
with an arbitrary $\frac{3}{10}\le\alpha\le\frac{5}{6}$, which is always better than Hirzebruch-Ivinskis' bound
(\ref{eq:HI}).
Substituting $\alpha=\frac{\sqrt{73}-1}{24}$, one obtains the maximal coefficient of $k$ in (\ref{elan1}), and hence
\begin{equation}\index{bound!Langer}
\frac{6059+7\sqrt{73}}{10512}\,n+k\le\frac{125+\sqrt{73}}{432}\,d^2-\frac{511+11\sqrt{73}}{1752}\,d\ .
\label{elan2}\end{equation}

\subsection{Spectral bound} \index{bound!spectral}
Further necessary conditions can be obtained by applying the semicontinuity of the singularity spectrum (see \cite{Va}), which works in any dimension. The  singularity spectrum of a hypersurface singularity $f:\CC^{n+1} \to \CC$  gathers the information about the eigenvalues of the monodromy operator $T$ and about the Hodge filtration
$\{F^p\}$ on its vanishing cohomology. The  {\em (singularity) spectrum} \index{singularity!spectrum} \index{spectrum} is defined as an unordered $\mu(f)$-tuple of rational numbers $(a_1,...,a_\mu)$  (counted with frequencies), where
the frequency of the number $a$ in the spectrum is equal to the dimension of the eigenspace of the semisimple part of $T$ acting on $F^p/F^{p+1}, \, p=[n-a]$, with eigenvalue $exp(-2\pi i a)$.
If $F: X \to S$  is a good representative of a deformation of $f$,  let $\Sig_{F^{-1}(s)}$ denote the union of all spectra of the singular points in the fiber $F^{-1}(s)$, where the frequency of $a$ in
$\Sig_{F^{-1}(s)}$ is the sum of its frequencies in the spectra of all singular points of $F^{-1}(s)$.

The {\em semicontinuity of the spectrum} \index{spectrum!semicontinuity}  says that any half open interval $(t,t+1] \subset \R$ is a semicontinuity domain for $F$, that is, the sum $M_{F^{-1}(s)}$ of the frequencies of the elements of $(t,t+1]$ in $\Sig_{F^{-1}(s)}$  is upper semicontinuous for $s\in S$ (\cite[Theorem 2.4]{St}).
Before that Varchenko \cite{Va} had proved that for deformations $F$ of low weight of a quasi-homogeneous $f$ even every open interval $(t,t+ 1)$ is a semicontinuity domain.\\

The semicontinuity of the singularity spectrum can be used to
compute effectively an upper bound for the number of isolated hypersurface
singularities of a given type occurring on a hypersurface
\mbox{$V\subset \PP^n$} of degree $d$. More precisely, one can apply
the following two facts:
\begin{itemize}
\item any hypersurface of degree $d$ with isolated singularities can be obtained
  as small deformation of $V(f_d)$, where $f_d(x_1,...,x_n)$ is
  a nondegenerate $d$-form, which stays fixed in the deformation, while all variable terms have degree $<d$\,\footnote{Such deformations are called \emph{lower}.}; furthermore, the space of nondegenerate $d$-forms is connected;
  hence for the bound it is sufficient to compute the spectrum of
  $f=x_1^d+\ldots+x_n^d$;
\item if precisely $M$ of the spectral
  numbers (counted with their frequencies) of the singularity defined by
  \mbox{$f=x_1^d+\ldots+x_n^d$}
  are in the interval \mbox{$(t,t+1]$}, \mbox{$t\in \R$},  then the sum of the frequencies of the spectral numbers in the interval $(t,t+1]$ of the  singularities (close to $0$)
  of a small deformation $F$ of $f$ can be at most $M$, i.e.,
  $M_{F^{-1}(s)} \leq M_{f^{-1}(0)}=M$
(cf. \cite[Theorem 2.4]{St}; if the deformation is of low weight, we can use even the open interval $(t,t+1)$ by \cite{Va}).
\end{itemize}

For instance, we can look for an upper bound for the number of cusps which may
appear on a curve of degree $11$ by using \textsc{Singular} \cite{DGPS,Schu}\index{Singular@\textsc{Singular} code!for spectrum}:

\begin{small}\begin{verbatim}
LIB "gmssing.lib";
ring r=0,(x,y),ds;
poly g=x^2-y^3;         // a cusp
list s1=spectrum(g);    // spectral numbers of a cusp (with mult's)
s1;
//-> [1]:
//->      _[1]=-1/6  _[2]=1/6
//-> [2]:
//->      1,1
\end{verbatim} \end{small}
That is, for a cusp we have $M_{g^{-1}(0)} =2$ for each interval $(t,t+1)$ with $t< -1/6$, $t+1 > 1/6$.
\begin{small}\begin{verbatim}
poly f = x^11+y^11;
list s2 = spectrum(f);  // spectral numbers of f (with mult's)
s2;
//-> [1]: (spectral numbers)
//->     _[1]=-9/11  _[2]=-8/11  _[3]=-7/11  _[4]=-6/11  _[5]=-5/11
//->     _[6]=-4/11  _[7]=-3/11  _[8]=-2/11  _[9]=-1/11  _[10]=0
//->     _[11]=1/11  _[12]=2/11  _[13]=3/11  _[14]=4/11  _[15]=5/11
//->     _[16]=6/11  _[17]=7/11  _[18]=8/11  _[19]=9/11
//-> [2]: (frequencies or multiplicities)
//->     1,2,3,4,5,6,7,8,9,10,9,8,7,6,5,4,3,2,1
\end{verbatim}
  \end{small}
  Having computed the spectral numbers, we look for an
appropriate interval \mbox{$(t,t+1)$} to apply the semicontinuity theorem $M_{F^{-1}(s)} \leq M_{f^{-1}(0)}$, $F$ a deformation of $f$. If $F^{-1}(s)$ contains $k$ cusps,
then $M_{g^{-1}(0)} k =2k \leq M_{f^{-1}(0)}$.
Choosing \mbox{$t=-\frac{2}{11}$} we get $2k \leq 63$, i.e.,  at most $31$ cusps can appear on a curve of degree 11.

The same result can be computed by using
the {\sc Singular} procedure
\texttt{spsemicont} to get the sharpest bound for the number of singularities obtainable in the above way:
\begin{small}
\begin{verbatim}
spsemicont(s2,list(s1),1);
// -> [1]: 31
\end{verbatim}
\end{small}

We recall that the spectrum is a topological invariant of the curve singularity, and, for example, according to \cite{St0} for the quasihomogeneous singularity $x^m+y^n=0$ is the multiset (set with frequencies)
\begin{equation} \index{spectrum}
\left\{\frac{i}{m}+\frac{j}{n}-1\ :\ 1\le i\le m-1,\ 1\le j\le n-1\right\}.\label{eq:spectrum}\end{equation}
A simple algorithm for computing the spectrum of an arbitrary isolated curve singularity was suggested in \cite{Ker}.

Varchenko \cite{Va} used the semicontinuity of the spectrum to give an upper bound for the number of  nondegenerate singular points (i.e. of type $A_1$) on arbitrary hypersurfaces in $\PP^n$.
Let $N_n(d)$ be the maximal number of singular points of type $A_1$ which can exist on a hypersurface in $\PP^n$ of degree $d$. He proves the following  inequality (conjectured by Arnold),
\begin{equation} \index{bound!spectral}
N_n(d) \leq A_n(d) =a_nd^{\,n}+\text{ (lower degrees of {\em d})},
\label{eq:var1}
\end{equation}
with  $a_n\sim \sqrt {(6/\pi)n}=1.3819...\sqrt n, \text{ if } n \to \infty$. 

\section{Plane curves with nodes and cusps}\label{sec:2}

The simplest singularities, the node $A_1$ and the ordinary cusp $A_2$, typically occur in most of the questions related to singular curves. The case of curves with nodes and cusps is also the most studied case, both in classical and in modern algebraic geometry. It suggests beautiful results and challenging problems.
Furthermore, the study of the particular case of curves with nodes and cusps led to the development of important techniques and the discovery of most interesting phenomena in the geometry of singular algebraic curves and their families. We shall demonstrate this for the problem of the existence of a plane curve of a given degree with a given collection of nodes and cusps, both in the complex and real case.

\subsection{Plane curves with nodes}\label{ssec:2.1}

We start with the construction of complex plane curves with only nodes as singularities and with any prescribed number being allowed by the genus bound. The construction is due to Severi
\cite{Se} and very simple. It uses, however, the $T$-smoothness of a family of nodal curves in an essential way, called classically the ''completeness of the characteristic linear series''. For a modern proof see \cite{GK} and \cite[Section 4.5.2.1]{GLS3}.

For real curves their existence with the number of nodes below or equal to the genus bound is also classically known and due to Brusotti \cite{Br}, using the $T$-smoothness of the family of real nodal curves.
But in the real case we have to distinguish between three kinds of nodes: hyperbolic, elliptic and non-real (coming in complex conjugate pairs). The fact that, subject to the genus bound,  any prescribed distribution among the three different kinds can be realized was proved much later in
\cite{Sh6}, and with a different method by Pecker \cite{Pe1,Pe3,Pe4}.
The construction is much more difficult than in the complex case. It uses a ``patchworking construction'' invented by Viro for non-singlar real curves and  extended to singular curves in \cite{Sh6, Sh12}
(see also \cite[Sections 2.3 and 4.5.1]{GLS3}).\\

\noindent
{\bf Complex curves}
\medskip

Let $C$ be a complex plane irreducible curve of degree $d$ with $n$ nodes. The genus bound (\ref{genbound}) yields
\begin{equation}n\le\frac{(d-1)(d-2)}{2},\label{nodalbound}\end{equation}
since $g(C)\ge0$ and the $\delta$-invariant of the node is $1$. If we assume that $C$ consists of $s$ irreducible components, then we get \index{bound!genus}
\begin{equation}n\le\frac{(d-1)(d-2)}{2}+s-1.\label{nodalbound-reducible}\end{equation}
It goes back to Severi \cite{Se} that the bounds (\ref{nodalbound}) and (\ref{nodalbound-reducible}) are, in fact, necessary and sufficient for the existence of a plane curve of degree $d$ with $n$ nodes. More precisely,

\begin{theorem}\label{t-Severi}\index{existence!of nodes}
The bound (\ref{nodalbound}) is necessary and sufficient for the existence of a complex plane irreducible curve of degree $d$ with $n$ nodes as its only singularities.

Furthermore, for any $s\ge2$, any positive integers $d,d_1,...,d_s$ satisfying $d=d_1+...+d_s$, and nonnegative integers $n_1,...,n_s$, the inequalities
$$n_i\le\frac{(d_i-1)(d_i-2)}{2},\quad i=1,...,s,$$ are necessary and sufficient for the existence of a complex plane reduced curve of degree $d$ splitting into $s$ irreducible components of degrees $d_1,...,d_s$ and having
$$n=\sum_{i=1}^sn_i+\sum_{1\le i<j\le s}d_id_j$$ nodes as its only singularities, while the $i$-th component has precisely $n_i$ nodes, $i=1,...,s$.
\end{theorem}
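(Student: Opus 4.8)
The plan is to treat necessity and sufficiency separately, and within each to reduce the reducible statement to the irreducible one.

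Necessity is immediate from the tools already assembled. For an irreducible curve $C$ of degree $d$ with $n$ nodes, the genus formula (\ref{genusformula}) together with $\delta(A_1)=1$ gives $g(C)=\frac{(d-1)(d-2)}{2}-n$, and $g(C)\ge 0$ yields (\ref{nodalbound}). In the reducible situation each component $C_i$ is an irreducible curve of degree $d_i$ carrying $n_i$ of the nodes as its own singularities, so the same genus bound applied to $C_i$ gives $n_i\le\frac{(d_i-1)(d_i-2)}{2}$; the remaining nodes are the transversal intersection points of distinct components, and B\'ezout's theorem (\ref{bezout}) forces their number to be exactly $\sum_{i<j}d_id_j$, which is the asserted bookkeeping for $n$.

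For the sufficiency in the irreducible case I would start from the most degenerate nodal curve of degree $d$: a union $C_0=L_1\cup\dots\cup L_d$ of $d$ generic lines, which has exactly $\binom{d}{2}=\frac{d(d-1)}{2}$ nodes and no other singularities. The key input is the $T$-smoothness of the family of nodal curves at $C_0$, classically the completeness of the characteristic linear series, which here is elementary (the $\binom{d}{2}$ nodes impose independent conditions on curves of degree $d$, the expected dimension $\frac{d^2+3d}{2}-\binom{d}{2}=2d$ coinciding with the dimension of the space of $d$-line arrangements); cf. \cite{GK} and \cite[Section 4.5.2.1]{GLS3}. This $T$-smoothness is equivalent to the possibility of smoothing the nodes independently by degree-$d$ curves: for any subset $T$ of the nodes there is a family of curves of degree $d$ smoothing exactly the nodes in $T$ and preserving the others. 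Associating to $C_0$ its dual graph (the complete graph $K_d$, with vertices the lines $L_i$ and one edge per node joining the two lines through it), the generic member of such a smoothing has irreducible components in bijection with the connected components of the subgraph spanned by the edges in $T$. I would therefore choose $T$ to contain a spanning tree ($d-1$ edges), making the smoothed subgraph connected and hence the generic smoothing irreducible; this already produces an irreducible curve with $\binom{d}{2}-(d-1)=\frac{(d-1)(d-2)}{2}$ nodes, the maximal value. To realize an arbitrary $n$ with $0\le n\le\frac{(d-1)(d-2)}{2}$, I enlarge $T$ by $\frac{(d-1)(d-2)}{2}-n$ of the remaining (non-tree) nodes, of which exactly $\frac{(d-1)(d-2)}{2}$ are available; the generic smoothing is then an irreducible plane curve of degree $d$ with precisely $n$ nodes.

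The reducible case then follows by combining the irreducible case with a general-position argument. For each $i$ the irreducible statement provides an irreducible curve $C_i$ of degree $d_i$ with exactly $n_i$ nodes; applying independent generic projective transformations $g_i\in\Aut(\PP^2)$ places the $g_iC_i$ in general position, so that no component passes through a node of another, no three components are concurrent, and distinct components meet transversally. Then $C=g_1C_1\cup\dots\cup g_sC_s$ is reduced with $s$ irreducible components, its only singularities are nodes, the $i$-th component retains its $n_i$ self-nodes, and by B\'ezout each pair contributes $d_id_j$ transversal nodes, giving the required total $n=\sum_i n_i+\sum_{i<j}d_id_j$. I expect the main obstacle to be precisely the $T$-smoothness / independent-smoothing step: the spanning-tree combinatorics and the general-position argument are elementary, but the passage from an abstract smoothing of individual nodes to an honest family of degree-$d$ curves realizing it is the substantive deformation-theoretic ingredient.
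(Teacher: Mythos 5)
Your proposal is correct and follows essentially the same route as the paper: both start from a union of $d$ generic lines, invoke Severi's independent-smoothing property ($T$-smoothness, via \cite{GK} and \cite[Section 4.5.2.1]{GLS3}) to smooth a prescribed subset of the $\binom{d}{2}$ nodes, and obtain the reducible case by placing the irreducible pieces in general position. Your spanning-tree choice is just a mild generalization of the paper's specific choice (smoothing the $d-1$ nodes lying on one fixed line, i.e.\ the star spanning tree), and merging the two smoothing steps into one changes nothing of substance.
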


\begin{proof}
Severi \index{construction!Severi's} proved that, given a nodal plane curve $C$ of degree $d$, the germ of the family of curves of degree $d$ having a node in a neighborhood of an arbitrary singular point of $C$, is a smooth hypersurface germ in ${\mathcal O}_{\PP^2}(d)|\simeq\PP^{d(d+3)/2}$, and, moreover, all these germs intersect transversally at $C$ (for a modern treatment see \cite[Corollary 6.3]{GK} and \cite[Corollary 4.3.6]{GLS3}). This fact immediately yields that there exists a deformation of $C$ in $\PP^{d(d+3)/2}$ along which prescribed nodes are smoothed out, while the others persist (possibly changing their position). Thus, given $n$ and $d$ satisfying (\ref{nodalbound}), we take the union of $d$ straight lines in general position, which is a curve with $\frac{d(d-1)}{2}$ nodes (the maximum by (\ref{nodalbound-reducible})). Then choose some line and deform the curve by smoothing out all $d-1$ intersection points of this line with the other lines, obtaining an irreducible, rational curve with $\frac{(d-1)(d-2)}{2}$ nodes (see Figure \ref{fig:lines}). Finally, we take another deformation by smoothing out $\frac{(d-1)(d-2)}{2}-n$ nodes and obtain an irreducible curve of degree $d$ with $n$ nodes as desired.

In the reducible case, we take irreducible curves of degrees $d_1,...,d_s$ with $n_1,...,n_s$ nodes respectively and place them in general position in the plane.
\end{proof}

\begin{figure}[ht] 
\bigskip
\begin{center}
\includegraphics[height=4.5cm,width=12cm]{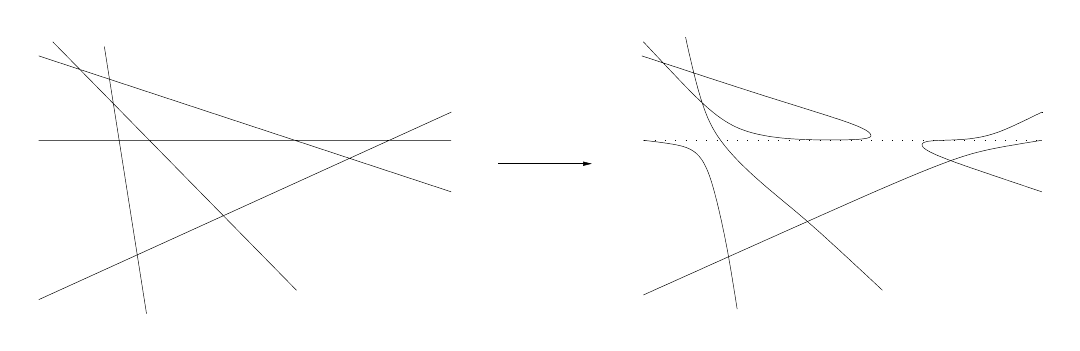}
\caption {Severi's construction of irreducible nodal curves \label{fig:lines}}
\end{center}
\end{figure}
\begin{remark}\label{r-tsmoothness} {\em We would like to underline the importance of the fact that each curve appeared in the Severi's construction was a member of a smooth equisingular family in $\PP^{d(d+3)/2}$ of expected dimension, and the germ of the family was the transversal intersection of smooth equisingular families corresponding to individual singular points of the given curve. That is, given such a curve with a possibly maximal number of singularities, one immediately obtains the existence of curves with any smaller amount of singularities.  We shall see later how efficient this property (which follows from \emph{$T$-smoothness})\index{T-smoothness} is for the construction of curves with arbitrary singularities.}
\end{remark}
\begin{figure}[ht]
\begin{center}
\unitlength1cm
\begin{picture}(11,5.5)
\put(0.7,0.5){\includegraphics[trim=20 20 20 20,
  clip,height=3cm,width=3.7cm]{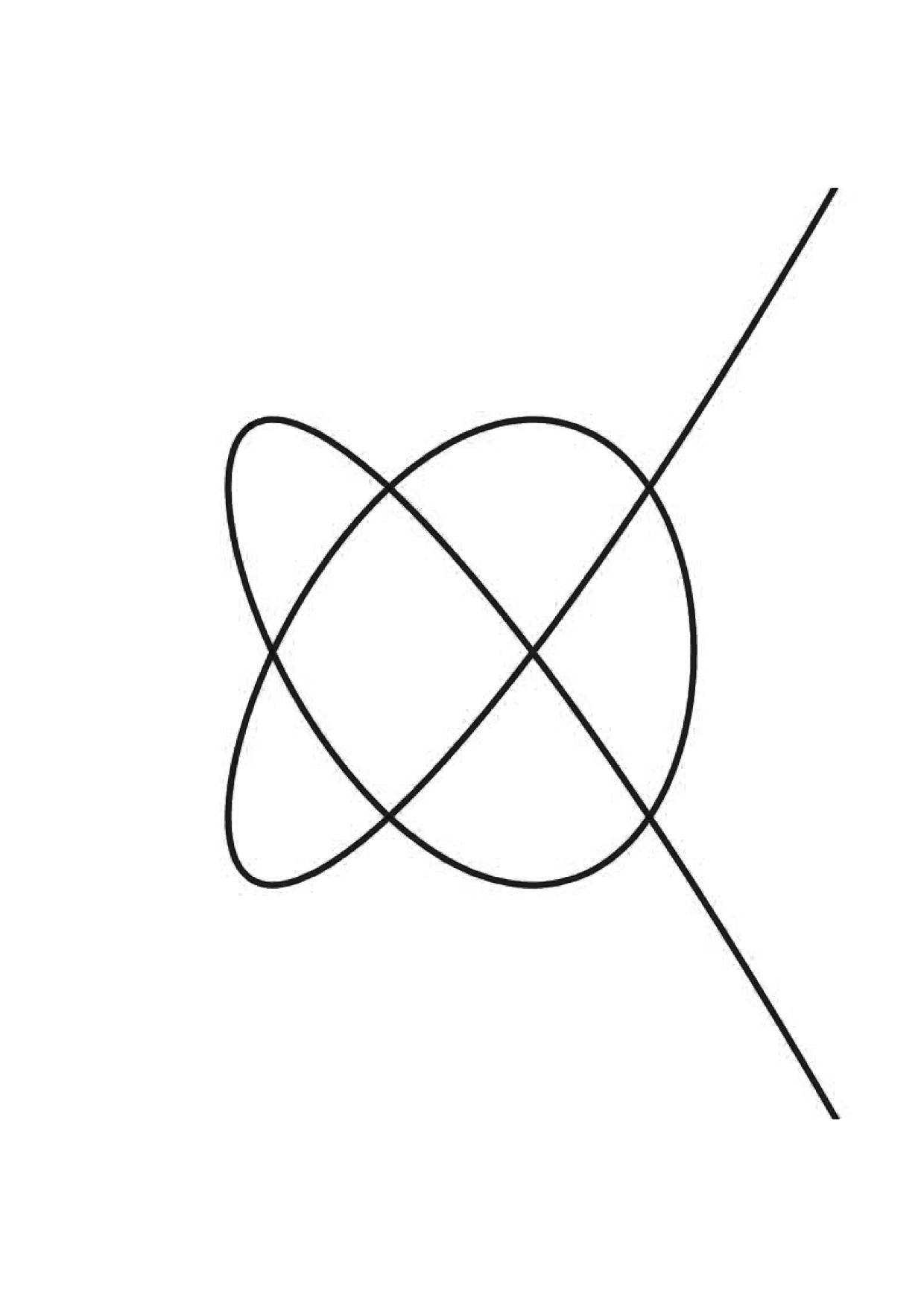}}
\put(2.5,0.2){(a)}
\put(6,0.5){\includegraphics[height=3.6cm,width=4.3cm]{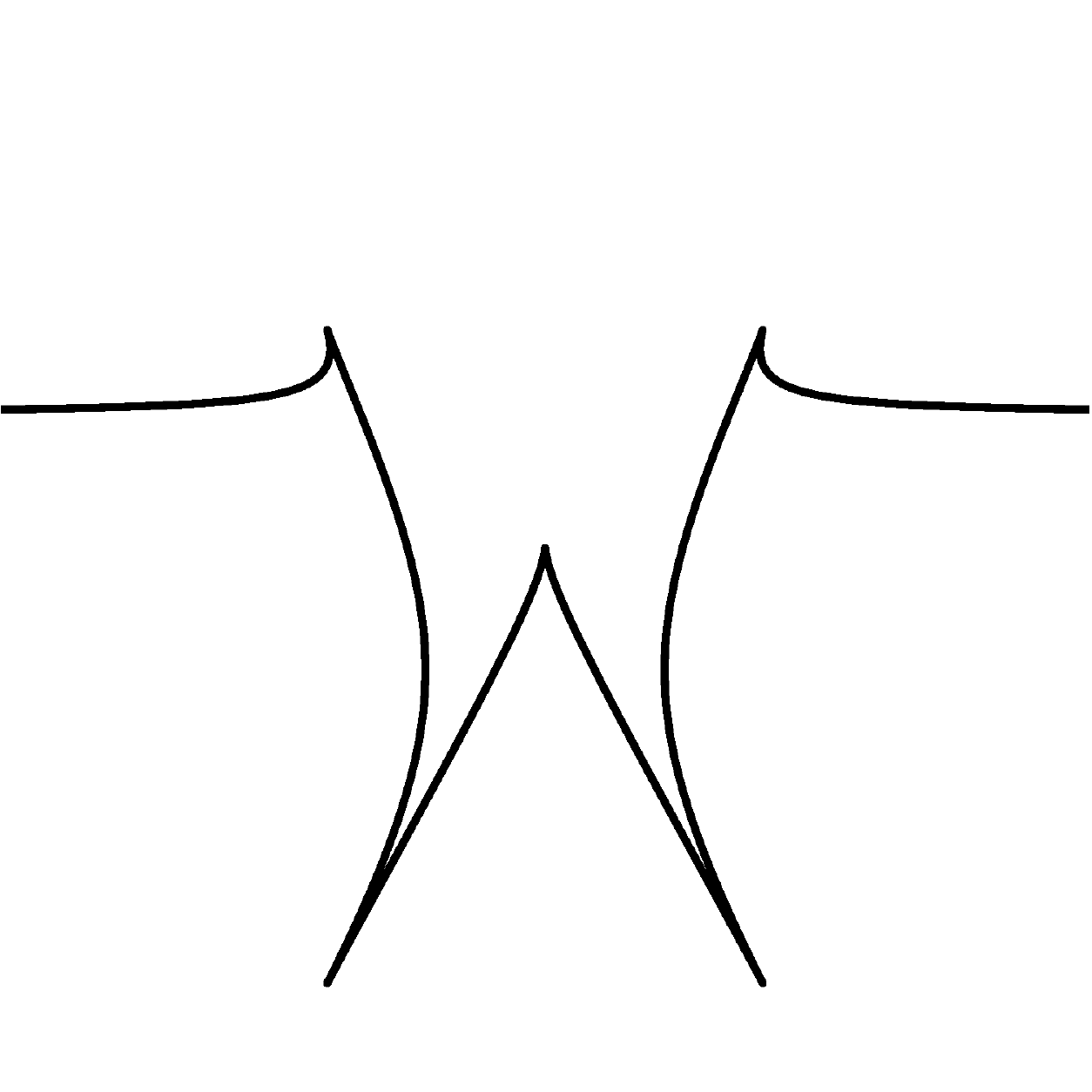}}
\put(8,0.2){(b)}
\end{picture}
\caption{Real plane quintics (a) with $6$~nodes:
$x^5-\frac{5}{4}x^3+\frac{5}{16}x-\frac{1}{2}y^4+\frac{1}{2}y^2-\frac{1}{16}=0$\,,
\ (b) with $5$ cusps: $\frac{129}{8} x^4y-\frac{85}{8} x^2y^3+
\frac{57}{32}y^5-20x^4-
\frac{21}{4}x^2y^2+\frac{33}{8}y^4-12x^2y+\frac{73}{8}y^3+32x^2=0$.}
\label{fig-quintic}
\end{center}
\end{figure}

\bigskip\noindent
{\bf Real curves}
\medskip

Over the reals, a nodal singular point can be of one of the following three types:
\begin{itemize}\item either {\em hyperbolic},\index{node!hyperbolic} i.e., a real intersection point of two smooth real local branches, locally equivalent to $\{x^2 - y^2 = 0\}$,
\item or {\em elliptic},\index{node!elliptic} i.e., a real intersection point of two complex conjugate smooth local branches, locally equivalent to $\{x^2+ y^2 = 0\}$,
\item or {\em imaginary}, \index{node!imaginary}i.e., a non-real node (which always comes in pairs of complex conjugate nodes).
\end{itemize}

Thus, a natural question is what amount of hyperbolic, elliptic and pairs of complex conjugate nodes a real plane curve can have. Note that Severi's construction is not of much help, since, for instance, a generic conjugation-invariant configuration of $d$ lines in the plane can have at most $\left[\frac{d}{2}\right]$ elliptic nodes. The patchworking construction invented by O. Viro around 1980 for the study of the topology of smooth real algebraic varieties (see, for instance, Appendix to \cite{GLS3}) was later applied to curves and hypersurfaces with singularities (see \cite[Section 2.3]{GLS3}). It allowed one to completely answer the above question \cite{Sh6}. Another solution was later suggested by Pecker \cite{Pe1}, who used explicit parameterizations of real rational curves, obtaining, for instance, the quintic shown in Figure \ref{fig-quintic}(a). Here we demonstrate the patchworking construction, which also  applies efficiently to curves with singularities of other types, while the methods of \cite{Pe1} are restricted to nodal curves only.

The version of the {\em patchworking construction}\index{patchworking!construction} \index{construction!patchworking}  which we need was introduced in \cite{Sh6,Sh12} (see also \cite[Sections 2.3 and 4.5.1]{GLS3}). Let us be given a convex lattice polygon $\Delta\subset\R^2$ and its convex\footnote{A convex subdivision is a subdivision into linearity domains of some convex piecewise linear function defined on the
lattice triangle $T_d=\conv\{(0,0),(d,0),(0,d)\}$.}
  \ subdivision into convex lattice polygons $\Delta_1,...,\Delta_N$. Let $F_1,...,F_N$ be bivariate complex or real polynomials with Newton polygons $\Delta_1,...,\Delta_N$, respectively, such that\\
 (i) the truncations of $F_i$ and $F_j$ on the common side $\sigma$ of $\Delta_i$ and $\Delta_j$ coincide, \\
 (ii) each polynomial $F_i$ is peripherally nondegenerate, i.e., the truncation of $F_i$ on any side of $\Delta_i$ defines a smooth curve in $(\CC^*)^2$, \\
 (iii) each polynomial $F_i$ defines a curve with isolated singularities in $(\CC^*)^2$.

 Denote by $S(F_i)$ the multi-set of topological or  analytic types of the singular points of the curve $F_i(x,y)=0$ in $(\CC^*)^2$.

 Then we orient the adjacency graph of the polygons $\Delta_1,...,\Delta_N$ without oriented cycles and verify the ${\mathcal S}$-transversality condition for each patchworking pattern $(\Delta_i,\partial\Delta_{i,+},F_i)$, where $\partial\Delta_{i,+}$ is the union of the sides of $\Delta_i$ corresponding to the incoming arcs of the adjacency graph and ${\mathcal S}$\index{$s$@$\mathcal S$}  stands for the chosen complex or real topological or analytic equivalence of singularities.

The following {\em patchworking theorem}\index{patchworking!theorem}  for curves says that we can ''glue'' the polynomials $F_i$ together to one polynomial $F$, wich defines a curve with isolated singularities in $(\CC^*)^2$ that
inherit the singularity types from the $F_i$.

\begin{theorem}\label{t-patchworking}
With the above notations and assumptions let $F_1,...,F_N$ with properties {\em (i), (ii), (iii)} be given. Then there exists a polynomial $F(x,y)$ with Newton polygon $\Delta$ such that
$$S(F)=\bigcup_{i=1}^NS(F_i).$$
Moreover, the family of ${\mathcal S}$-equisingular curves defined by polynomials with Newton polygon $\Delta$ is $T$-smooth at $\{F=0\}$.
%
\end{theorem}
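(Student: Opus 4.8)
The plan is to realize $F$ as one member of a Viro-type one-parameter degeneration governed by the convex subdivision, and then to track the singular points through the degeneration using the $\mathcal{S}$-transversality of the individual patchworking patterns (assumed in the setup preceding the theorem). By the footnote, the subdivision $\Delta=\bigcup_i\Delta_i$ is cut out by the domains of linearity of a convex piecewise linear function $\nu\colon\Delta\to\R$. I would fix such a $\nu$ and form, for a real parameter $t\in(0,1]$, the polynomial
$$F_t(x,y)=\sum_{(i,j)\in\Delta\cap\Z^2} c_{ij}\,t^{\nu(i,j)}\,x^i y^j,$$
where $c_{ij}$ is the coefficient of $x^iy^j$ in $F_k$ for any $k$ with $(i,j)\in\Delta_k$. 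Assumption (i) (matching of truncations on shared edges) makes $c_{ij}$ unambiguous, and the Newton polygon of $F_t$ equals $\Delta$ for every $t>0$. The whole point is then to show that $F=F_{t_0}$ has the required properties for all sufficiently small $t_0>0$.

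\emph{Local recovery of each piece.} On each $\Delta_k$ the function $\nu$ agrees with an affine form $\langle\,\cdot\,,w_k\rangle+h_k$. After the monomial reparametrization $x=t^{-w_{k,1}}X$, $y=t^{-w_{k,2}}Y$ and division by $t^{h_k}$, the contribution of the monomials supported on $\Delta_k$ tends to $F_k(X,Y)$ as $t\to0$, while, by strict convexity of $\nu$ across the edges of $\Delta_k$, every other monomial acquires a strictly positive power of $t$ and disappears in the limit. Thus, in the ``chart of $\Delta_k$'', the curve $\{F_t=0\}$ is for small $t$ a deformation of $\{F_k=0\}\subset(\CC^*)^2$ whose coefficients depend on $t$ through positive powers. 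Around each of the finitely many singular points of $\{F_k=0\}$ in the torus (finite by (iii)) I would fix a small polydisk; after the rescalings the distinct singular points and the distinct charts occupy pairwise disjoint regions of $(\CC^*)^2$ for every fixed small $t>0$, so that the candidate singularities of $F_{t_0}$ are literally those of the $F_k$.

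\emph{Persistence of the singularity types.} This is the heart of the matter and the main obstacle. The deformation of $\{F_k=0\}$ produced above need not preserve singularity types a priori; persistence is exactly what the $\mathcal{S}$-transversality of the pattern $(\Delta_k,\partial\Delta_{k,+},F_k)$ is designed to guarantee. Concretely, I would set up, at each singular point, the space of equianalytic (resp.\ equisingular) deformations, whose tangent space has codimension $\tau(S)$ (resp.\ $\tau^{es}(S)$), and observe that the incoming perturbation carried by $F_t$ is $\mathcal{S}$-transversal to it. An implicit function theorem argument with respect to $t$ (equivalently a convergent Newton iteration) then produces, for every small $t>0$ and uniformly across all charts, a singular point of $\{F_t=0\}$ of exactly the prescribed type $S$. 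Transversality is precisely the condition making the differential of the relevant equisingularity map surjective, so that the iteration converges; in the full treatment this surjectivity is the $h^1$-vanishing for the ideal sheaf of the associated zero-dimensional scheme on the toric surface $\mathrm{Tor}(\Delta)$.

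\emph{Absence of extra singularities and $T$-smoothness.} Away from the chosen polydisks two things must be checked. Within each chart, off the singular locus of $F_k$, the curve $\{F_k=0\}$ is smooth, and smoothness is an open condition, so $\{F_t=0\}$ stays smooth there for small $t$. In the transitional regions between charts — governed by the truncations on the shared edges — peripheral nondegeneracy (ii) ensures the glued curve is smooth, so no singularity is created in the gluing. Hence $S(F_{t_0})=\bigcup_k S(F_k)$ for small $t_0$, and I would set $F=F_{t_0}$. Finally, the asserted $T$-smoothness follows by assembling the local transversality statements: since the prescribed singular points sit in pairwise disjoint regions, their equisingularity conditions are independent, and the same $h^1$-vanishing on $\mathrm{Tor}(\Delta)$ that drove the persistence shows that the tangent space to the $\mathcal{S}$-equisingular family at $\{F=0\}$ has exactly the expected dimension $\frac{d^2+3d}{2}-\sum_i\tau(S_i)-\sum_i\tau^{es}(S_i)$. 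I expect this last bookkeeping — matching the sum of local codimensions to the global expected dimension — to be the most delicate step after the persistence argument itself.
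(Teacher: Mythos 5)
The paper itself gives no proof of Theorem \ref{t-patchworking}; it is quoted from \cite{Sh6,Sh12} and \cite[Sections 2.3 and 4.5.1]{GLS3}. Your outline follows the same strategy as those sources: the Viro polynomial $F_t=\sum c_{ij}t^{\nu(i,j)}x^iy^j$, recovery of each $F_k$ in its own monomial chart as $t\to0$, persistence of the singular points via transversality, and smoothness in the transitional regions from peripheral nondegeneracy. So the route is the right one, and the first and third steps are essentially fine.

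There is, however, a genuine gap in the persistence step, and it is betrayed by the fact that your argument never uses the acyclic orientation of the adjacency graph, which is part of the hypotheses. You treat each chart independently: ``the incoming perturbation carried by $F_t$ is $\mathcal{S}$-transversal, so an implicit function theorem restores the singularity.'' But restoring the prescribed singularity type of $\{F_k=0\}$ requires adjusting the coefficients supported on $\Delta_k$, and the coefficients on the shared sides are common to the neighboring patches; an unconstrained correction in the chart of $\Delta_k$ destroys the matching condition (i) and hence perturbs the neighbors in an uncontrolled way. This is exactly why the $\mathcal{S}$-transversality in the setup is a \emph{relative} condition for the triple $(\Delta_i,\partial\Delta_{i,+},F_i)$: one must be able to restore the singularities of $F_i$ using only deformations that fix the coefficients on the incoming sides $\partial\Delta_{i,+}$ (equivalently, an $h^1$-vanishing for the zero-dimensional scheme on $\operatorname{Tor}(\Delta_i)$ twisted down by the divisors corresponding to $\partial\Delta_{i,+}$), and the absence of oriented cycles is what allows the corrections to be organized consistently, each patch receiving its boundary data from already-processed neighbors and passing its outgoing sides on. Without this, the ``uniform convergence across all charts'' of your Newton iteration is not justified. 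A secondary imprecision: the expected dimension in the $T$-smoothness claim should be computed in the linear system of curves with Newton polygon $\Delta$ (i.e., $\#(\Delta\cap\Z^2)-1$ minus the local codimensions), not with the $\PP^2$ value $\tfrac{d^2+3d}{2}$, which is correct only for $\Delta=T_d$.
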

We say that a family of curves is {\em ${\mathcal S}$-equisingular}, if the choosen types ${\mathcal S}$ of the singular points of the curves stay locally constant along some section.\\

One of the nicest sides of the patchworking construction\index{construction!patchworking}\index{patchworking!construction} is that it works equally well over the complex and the real fields.
The first example illustrating this feature is as follows.

\begin{theorem}\label{t-realnode}\index{existence!of real nodes}
For any integer $d\ge3$ and nonnegative integers $a,b,c$, the inequality
\begin{equation}a+b+2c\le\frac{(d-1)(d-2)}{2},\label{eq:realnodal}\end{equation}
is necessary and sufficient for the existence of a real plane irreducible curve of degree $d$ having $a$ hyperbolic nodes, $b$ elliptic nodes and $c$ pairs of complex conjugate nodes as its only singularities.
\end{theorem}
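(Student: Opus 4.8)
The plan is to prove necessity and sufficiency separately, treating the patchworking construction as the engine for the existence direction.

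\textbf{Necessity.} The bound $a+b+2c\le\frac{(d-1)(d-2)}{2}$ is immediate from the complex genus bound \eqref{nodalbound}. Over $\CC$ the underlying curve $C_\CC$ is an irreducible plane curve of degree $d$, and the real singularities contribute nodes to it: each hyperbolic and each elliptic node is a single (real) node contributing $\delta=1$, while each pair of complex conjugate nodes contributes two nodes. Hence the total number of complex nodes is $a+b+2c$, and \eqref{nodalbound} gives the inequality directly. So the only real content is sufficiency.

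\textbf{Sufficiency via patchworking.} First I would fix a convex lattice polygon, namely the triangle $T_d=\conv\{(0,0),(d,0),(0,d)\}$, whose associated curves are exactly the plane curves of degree $d$. The idea is to build the three kinds of nodes by gluing elementary real pieces. For the building blocks I would use small triangles carrying polynomials whose real curves exhibit, respectively, a single hyperbolic node (from a truncation like $x^2-y^2$ up to perturbation), a single elliptic node ($x^2+y^2$ type), and a conjugate pair of imaginary nodes. Each block must be peripherally nondegenerate so that gluing along shared edges is unobstructed, and the truncations on common sides must match to satisfy condition (i). I would then choose a convex subdivision of $T_d$ into $N$ such blocks so that the prescribed counts $a,b,c$ are realized across the pieces, while keeping enough ``smooth'' (singularity-free) blocks to fill out the triangle. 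Orienting the adjacency graph acyclically and checking $\mathcal S$-transversality (for the real topological equivalence $\mathcal S$ corresponding to the three node types) for each pattern, Theorem~\ref{t-patchworking} then yields a real polynomial $F$ with Newton polygon $T_d$ whose curve has exactly $a$ hyperbolic, $b$ elliptic nodes and $c$ conjugate pairs, with $V_d^{irr}$ being $T$-smooth at $\{F=0\}$.

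\textbf{Irreducibility and reduction to the extremal case.} Two points require care. First, the patchworked curve need not be irreducible; I would either arrange the subdivision so that the pieces are linked into a single component, or invoke the $T$-smoothness guaranteed by Theorem~\ref{t-patchworking} to deform the curve, smoothing out any spurious nodes that disconnect components while preserving the prescribed singularities, exactly in the spirit of Remark~\ref{r-tsmoothness}. Second, rather than constructing a separate curve for every triple $(a,b,c)$, I would construct one curve attaining the extremal value $a+b+2c=\frac{(d-1)(d-2)}{2}$ for each \emph{type profile}, and then use $T$-smoothness to smooth out nodes one at a time; because each node type persists or disappears independently under the transversal intersection of the local equisingular families, any smaller admissible triple is reached by smoothing. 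The main obstacle is the block design: producing peripherally nondegenerate real polynomials on small lattice polygons that realize precisely one elliptic node or precisely one conjugate pair while matching prescribed edge truncations, and verifying $\mathcal S$-transversality for these real patterns. This is where the real structure genuinely complicates the complex argument, and is the technical heart to which I would devote the most effort, referring to the explicit constructions of \cite{Sh6, Sh12}.
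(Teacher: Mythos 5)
Your skeleton — necessity from the genus bound, sufficiency by patchworking an extremal nodal configuration over a convex subdivision of $T_d$ and then smoothing out nodes via $T$-smoothness (Remark \ref{r-tsmoothness}) — is exactly the paper's strategy, and your necessity argument is complete. The gap is in the sufficiency direction, at precisely the point you yourself flag as ``the technical heart'': you never produce the building blocks, and the design you gesture at (one small polygon per node, padded with singularity-free blocks) cannot reach the extremal value $a+b+2c=\frac{(d-1)(d-2)}{2}$ as described. The obstruction is a lattice-point count: an irreducible curve in $(\CC^*)^2$ with Newton polygon $\Delta_i$ has at most $\#(\Int(\Delta_i)\cap\Z^2)$ nodes, and $\sum_i\#(\Int(\Delta_i)\cap\Z^2)\le\#(\Int(T_d)\cap\Z^2)=\frac{(d-1)(d-2)}{2}$, with equality only if no interior lattice point of $T_d$ lies on an edge of the subdivision. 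So ``one node per block'' forces every block to have exactly one interior lattice point and the subdivision to waste none on its edges; in particular, inserting ``smooth blocks to fill out the triangle'' actively destroys the extremal count whenever those blocks contain interior lattice points. You also do not exhibit a peripherally nondegenerate real polynomial on such a minimal polygon realizing exactly one elliptic node (or one conjugate pair) with prescribed real edge truncations, which is the genuinely real difficulty here.

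The paper resolves this by going the opposite way: it packs many nodes into each block. For the representative case $a=c=0$ it subdivides $T_d$ into triangles lattice-equivalent to $\conv\{(0,0),(0,2),(m,1)\}$ (Figure \ref{fig2}), whose interior lattice points together account for all $\frac{(d-1)(d-2)}{2}$ interior points of $T_d$, and on each such triangle places the Chebyshev block $F(x,y)=y^2-2yP_m(x+\lambda)+1$, which has exactly $m-1$ elliptic nodes in $(\R^*)^2$ — one per interior lattice point — because $P_m$ has $m-1$ extrema on the levels $\pm1$. Transversality of nodal patterns is automatic, and Theorem \ref{t-patchworking} plus the smoothing argument finish as you describe; for general $(a,b,c)$ the paper, like you, defers to \cite{Sh6}. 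So your outline is the right one, but without an explicit block construction of this kind (and a subdivision compatible with it) the proof does not close.
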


\begin{proof} The necessity follows from the genus bound (\ref{genbound}). Thus, we focus on the construction.
We only sketch the proof, which in full detail is presented in \cite{Sh6}.
Namely, we shall prove the theorem in the case $a=c=0$. As noticed in Remark \ref{r-tsmoothness}, it is enough to construct a real rational curve with $\frac{(d-1)(d-2)}{2}$ elliptic nodes. Consider the subdivision of the lattice triangle $T_d=\conv\{(0,0),(d,0),(0,d)\}$ into lattice triangles as shown in Figure \ref{fig2}. Observe that the number of interior integral points in these triangles amounts to $\frac{(d-1)(d-2)}{2}$.

\begin{figure}
\setlength{\unitlength}{1.0cm}
\begin{picture}(7,5)(0,0)
\thinlines

\put(2,0.5){\vector(1,0){4}}\put(2,.5){\vector(0,1){4}}
\dashline{0.2}(2,1)(5,1)\dashline{0.2}(2,2)(4,2)\dashline{0.2}(2,3)(3,3)
\dashline{0.2}(2,1.5)(4.5,1.5)\dashline{0.2}(2,2.5)(3.5,2.5)\dottedline{0.1}(5,1)(5,0.5)

\thicklines

\put(2,0.5){\line(1,0){3.5}}\put(2,0.5){\line(0,1){3.5}}
\put(5.5,0.5){\line(-1,1){3.5}}\put(2,0.5){\line(6,1){3}}
\put(2,1.5){\line(6,-1){3}}
\put(2,1.5){\line(4,1){2}}\put(2,2.5){\line(4,-1){2}}
\put(2,2.5){\line(2,1){1}}\put(2,3.5){\line(2,-1){1}}

\put(5.5,0.6){$d$}\put(4.5,0){$d-1$}\put(1.6,1.4){$2$}\put(1.6,2.4){$4$}
\put(1.6,3.9){$d$}

\end{picture}
\caption{Patchworking of a real plane curve with elliptic nodes}\label{fig2}
\end{figure}
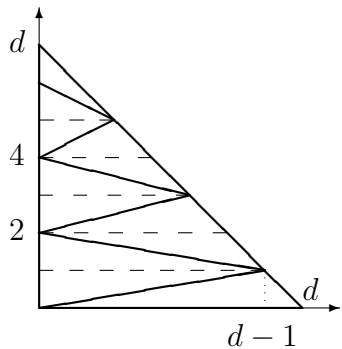

Each tile of the subdivision is a triangle of the form $T=\conv\{(0,0),(0,2),(m,1)\}$ (up to an automorphism of the lattice $\Z^2$). We claim that the real polynomial
\begin{equation}F(x,y)=y^2-2yP_m(x+\lambda)+1,\label{eq:chebyshev}\end{equation}
where $P_m(x)=\cos(m\arccos x)$ is the $m$-th Chebyshev polynomial and $\lambda$ is a generic real number, has Newton polygon $T$ and defines a real plane curve with $m-1$ elliptic nodes in $(\R^*)^2$. Note that $m-1$ is the number of interior integral points in $T$. To prove the claim it is enough to rewrite the equation of the curve $F(x,y)=0$ in the form
$$y=P_m(x+\lambda)\pm\sqrt{P_m(x+\lambda)^2-1}$$
and recall that $P_m$ has $m-1$ extrema:  $\left[\frac{m}{2}\right]$ minima on the level $-1$ and $\left[\frac{m-1}{2}\right]$ maxima on the level $1$.

Now we associate with each triangle $T^{(i)}$ of the subdivision a polynomial $F^{i)}$ with Newton triangle $T^{(i)}$ which is obtained from a polynomial like (\ref{eq:chebyshev}) by the coordinate change matching an appropriate automorphism of the lattice $\Z^2$. A further transformation $F^{(i)}(x,y)\mapsto\alpha_i F^{(i)}(\beta_i x,\gamma_i y)$ with suitable positive $\alpha_i,\beta_i,\gamma_i\in\R$ equates the truncations of each pair of the neighboring polynomials $F^{(i)}$, $F^{(i+1)}$ on the common side of $T^{(i)}$ and $T^{(i+1)}$. To complete the proof, we apply Theorem \ref{t-patchworking}, observing that, in the nodal case, every patchworking pattern $(T^{(i)},\partial_+T^{(i)},F^{(i)})$ is ${\mathcal S}$-transversal (${\mathcal S}$ being the topological or analytic equivalence of singularities), see \cite[Theorem 4.2]{Sh12} or \cite[Corollary 4.5.3]{GLS3}.
\end{proof}

\subsection{Plane curves with nodes and cusps}\label{ssec:2.2}

%
%
%


Questions concerning the number of nodes and cusps on a plane curve of a given degree are classical and highly nontrivial as compared to the purely nodal case, in particular over the reals. No complete answer is known in general, neither in the complex case, nor in the real one.

The general restrictions for their existence from Section \ref{sec.arb:restr}, such as the genus formula, the Pl\"ucker formulas, and the bounds by Hirzebruch-Ivinskis  \cite{HF,Iv} and Langer \cite{Lan} take a special simple form for curves with nodes and cusps. We compare the asymptotics of the bounds by Hirzebruch-Ivinskis and Langer if the degree goes to infinity.

In the second part of this section we report on the state of knowledge on curves with many cusps, complex as well as real. Special attention will be given to small degrees, with precise references to their construction.

The last part is devoted to the patchworking construction, which  provides an asymptotically complete answer if we restrict to real and complex plane curves with nodes and cusp belonging to $T$-smooth equisingular families.

 For the results of this section see also
\cite[Sections 4.2.2.2, Section 4.5.2.1]{GLS3} (for restrictions) and \cite[Sections 4.5.2.2, 4.5.2.3]{GLS3} (for constructions).

In the whole section we consider complex or real curves which are irreducible over the complex numbers.\\

\bigskip\noindent
{\bf Restrictions for the existence}\index{restrictions!for existence!of nodes and cusps}
\medskip

Let $C$ be a curve with only nodes and cusps as singularities.
For a node we have $\del = \mu=1$, $\kappa = 2$ and for a cusp $\del =1, \, \mu = 2, \ \kappa = 3$.
Hence
the formulas (\ref{mubound}), resp.  (\ref{genusformula}), give as a necessary condition for the existence of an irreducible, resp. not necessarily irreducible, curve with $n$ nodes and $k$ cusps the estimates
\begin{eqnarray}
2 n + 2 k & \leq & (d-1)(d-2),  \text{ resp.}\nonumber\\
n + 2 k & \leq & (d-1)^2.\nonumber
\end{eqnarray}
From the Pl\"ucker formulas\index{Pl\"ucker!formula} (\ref{e304}) and (\ref{eq:hess}) we get the necessary conditions for irreducible curves, i.e. for the non-emptiness of $V_d^{irr}(nA_1+kA_2)$,
 (originally due to Lefschetz \cite{Lef}, combined with the fact that $d^*(d^*-1)\ge d$ and formula (\ref{eq:hesssh}))
\begin{eqnarray}
2 n + 3 k & \leq & d^2\!\!\:-d-\sqrt{d}\,, \nonumber\\
6 n + 8 k & \leq & 3d^2\!\!\:-6d\,.\label{nchess}
\end{eqnarray}.

\smallskip
Better bounds from Hirzebruch-Ivinskis, Langer, and spectral estimates
are obtained as a consequence of deep results in algebraic geometry.
The general Hirzebruch-Ivinskis inequality  (\ref{eq:HI})
reads for a curve of an even degree $d\ge10$ with  $n$ nodes and $k$ cusps as  (cf. (\ref{eq:Iv1}))
\begin{equation}\frac{9}{8}n+2k\,\le\, \frac{5}{8}d^2\!-\frac{3}{4}d\quad\text{for all}\ d\ \text{even},\ d\ge6\ \label{eq:Iv2}\end{equation}

Langer's inequality (\ref{elan2}) applies to all degrees $d\ge10$, and in this range it is always better then (\ref{eq:Iv2}). Hirzebruch-Ivinskis (HI)
compared to Langer (L) gives:
\begin{align}\text{(HI):}&\qquad 0.5625n+k\le0.3125d^2-0.375d,\nonumber\\
\text{(L):}&\qquad 0.5821n+k\le0.3091d^2-0.3453d.\nonumber\end{align}
In particular, Langer's inequality implies that the \emph{maximal number of cusps} $k_{\max}(d)$ on a curve of degree $d$ satisfies
\begin{equation}\lim_{d\to\infty}\sup\frac{k_{\max}(d)}{d^2}\le\frac{125+\sqrt{73}}{432}=0.3091...\ .
\label{eq:langer}\end{equation}
We also mention the spectral bound \cite[Theorem, page 164]{Va}
\begin{equation}\frac{1}{2}n+k\le\frac{1}{2}\cdot\left(\begin{array}{l} \text{the number of integral points}\ (i,j)\ \text{satisfying}\\
0<i,j<d,\quad \frac{d}{6}<i+j\le\frac{7}{6}d\end{array}\right)\label{eq:var}\end{equation}
$$=\frac{23}{72}d^2+O(d)\approx0.3194d^2+O(d),$$
which is asymptotically weaker than the Hirzebruch-Ivinskis' and Langer's bounds.\\

\medskip\noindent{\bf Curves with a large number of cusps}
\medskip

The problem of existence of plane curves with a large number of cusps attracted a special attention, due to the fact that the maximal possible number of cusps in general is not known yet. We shortly describe here several constructions.

Following Zariski \cite{Za2}, \index{construction!Zariski's} consider curves $C_r^{(1)}\subset\PP^2$ of degree $d=d_r=6r$, $r\ge1$, given by
$F^2+G^3=0$, where $F,G\in\CC[x,y,x]$ are generic homogeneous polynomials of degree $3r$ and $2r$, respectively.
The curves $F=0$ and $G=0$ then intersect transversally at $6r^2$ distinct points, and each of these intersection points is an ordinary cusp of the curve $C^{(1)}_r$. The total number of cusps is $6r^2=\frac{d^2}{6}$, which is far from the upper bounds discussed above. However, choosing appropriate real $F$ and $G$, one can obtain a real irreducible curve $C^{(1)}_r$ of degree $d=6r$ with $\frac{d^2}{6}$ real cusps.

Ivinskis' construction \cite{Iv} \index{construction!Ivinskis'} has provided a bigger number of cusps. Namely, he started with the sextic curve
$F(x,y,z)=0$ having $9$ cusps in the torus $(\CC^*)^2$ and considered the series of curves $C_r^{(2)}=\{F(x^r,y^r,z^r=0\}$, $r\ge1$, of degree $d=d_r=6r$. Since the substitution of $(x^r,y^r,z^r)$ for $(x,y,z)$ defines an $r^2$-sheeted covering of the torus $(\CC^*)^2$, we obtain that $C^{(2)}_r$ has $9r^2=\frac{d^2}{4}$ cusps.
This number of cusps is closer to the upper bounds, but the number of real cusps of $C^{(2)}_r$ is at most $12$.

A new idea was suggested by Hirano \cite{HA}: \index{construction!Hirano's} she used a sequence of coverings defined by the substitution of $(x^3,y^3,z^3)$ for $(x,y,z)$, choosing each time the coordinate system with axes tangent to the current curve, and noticing that each tangent point like that lifts to three ordinary cusps. In particular, starting with the sextic curve having $9$ cusps, she produced the sequence of curves $C^{(3)}_k$, $k\ge1$, of degree $d=d_k=2\cdot 3^k$, having
$$s_k=\frac{9}{8}(9^k-1)$$
cusps. The limit ratio of the number of cusps by the square of the degree equals here
$$\lim_{k\to\infty}\frac{s_k}{d_k^2}=\frac{9}{32}=0.28125,$$
which is closer to the Langer's bound (\ref{eq:langer}). Moreover, this was the first example of an equisingular family having a negative expected dimension
$$\frac{d_k(d_k+3)}{2}-2s_k\sim-\frac{d_k^2}{16}\quad\text{as}\ k\to\infty,$$
implying that the cusps impose dependent conditions on the space of curves of degree $d$ and that this $ESF$ is not $T$-smooth.

The construction by Hirano was later refined by Kulikov \cite{Ku} and then by Calabri et al. \cite[Theorem 6]{CPS}, who found a sequence of curves $C^{(4)}_k$ of degree $d_k=108\cdot 9^k$, $k\ge1$, having at least
$$s_k=\frac{69309}{20}9^{2k}-27\cdot9^k-\frac{9}{20}$$ cusps, which yields the (so far) best known limit ratio
$$\lim_{k\to\infty}\frac{s_k}{d_k^2}=\frac{2567}{8640}=0.2971...\ .$$

Notice also that the constructions
by Hirano, Kulikov, and Calabri et al. give a rather small number of real cusps.
\medskip

For small degrees $d$, the known upper bounds often coincide with the known maximal number $s_{\max}(d)$ of cusps of a plane curve of degree $d$ or differ by $1$. We present the results in the following table with an additional information on the known maximal number $s_{\max,\R}(d)$ of real cusps on a real curve of degree $d$.\index{bound!small degrees}
\begin{center}
\begin{tabular}{|l|c|c|c|c|c|c|c|c|c|c|c|c|}
\hline
degree &\makebox[0.35cm] 3 &\makebox[0.35cm] 4
&\makebox[0.35cm] 5 &\makebox[0.35cm] 6 &\makebox[0.35cm] 7
&\makebox[0.35cm] 8 &\makebox[0.35cm] 9 &\makebox[0.35cm]{10}
&\makebox[0.35cm]{11} &\makebox[0.35cm]{12}\\
\hline
upper bound & 1 & 3 & 5 & 9 & 10 & 15 & 21 & 26 & 31 & 40 \\
\hline $s_{\max}$ & 1 & 3 & 5 & 9 & 10 & 15 & 20 & 26 & 30 & 39 \\
\hline $s_{\max,\R}$ & 1 & 3 & 5 & 7 & 10 & 14 & 14 & 18 & 23 & 28 \\
\hline
\end{tabular}
\end{center}
\medskip

The upper bounds for $3\le d\le6$ follow from the second Pl\"ucker formula in the form (\ref{nchess}). The upper bounds for $7\le d\le9$ were proved by Zariski \cite[Pages 221, 222]{Za2}: he showed that for the $d$-multiple cover of the plane ramified along a plane curve of degree $d$ 
with only nodes and cusps, the irregularity vanishes \cite[page 213]{Za2}, and then he derived that the family of curves of degree $d-3-\left[\frac{d-1}{6}\right]$ passing through the 
cusps of the given curve was unobstructed; hence, the number of cusps does not exceed
$\frac{(d-m)(d-3-m)}{2}+1$, where $m=\left[\frac{d-1}{6}\right]$ (cf. \cite[Formula (18) on page 221]{Za2}).
The bounds for $9\le d\le 10$ follow also from the spectral estimate (\ref{eq:var}). The bound $31$ for $d=11$ follows from the semicontinuity of the spectrum for lower deformations of quasihomogeneous singularities \cite[Theorem in page 1294]{Var1} applied to the open interval $\left(\frac{2}{11},\frac{13}{11}\right)$ (different from that in (\ref{eq:var})). At last, the bound $40$ for $d=12$ follows from the Hirzebruch-Ivinskis estimate (\ref{eq:Iv2}).

The cubic, quartic, quintic, sextic, and septic with the indicated number of cusps are classically known, for example, the quartic is dual to the nodal cubic, while the sextic is dual to a smooth cubic, the quintic was known to Segre (an explicit construction can be found in \cite{Gu}, see also Figure \ref{fig-quintic}(b)). The maximal known cuspidal curves for $d=10$ and $12$ were constructed by Hirano \cite{HA}. The maximal known cuspidal curves for $d=8$, $9$, and $11$ were constructed via patchworking respectively in \cite[Theorem 4.3]{Sh12} and \cite[Appendix]{CPS},
\cite[Section 4.5.2.3]{GLS3}. The maximal cuspidal septic was constructed by Zariski \cite[Page 222]{Za2} (see also \cite{Koe}). We comment on this result, which nicely combines duality of curves with the classical result on deformation of curves: the nodes and cusps of an irreducible plane curve of degree $d$ can be independently deformed in a prescribed way or preserved as long as the number of cusps is less than $3d$ \cite{Segre,Segre2}. Zariski starts with the sextic having $9$ cusps, deforms it into the sextic with $7$ cusps and one node, takes its dual which is a curve of degree $7$ with $10$ cusps and $3$ nodes and, finally, smoothes out the nodes.

For $d\le5$, there are real maximal cuspidal curves with only real cusps: such a quartic is dual to the cubic with an elliptic node, the cuspidal quintic constructed in \cite{Gu} and shown in Figure \ref{fig-quintic}(b) is real and has only real cusps. The known maximal numbers of real cusps for degrees $6\le d\le 8$ were reached in \cite{IS}. It is interesting that $s_{\max,\R}(6)=7$ is the actual maximum for real sextics as shown in \cite{IS}: the absence of a real sextic with $8$ real cusps was derived from a delicate analysis of the moduli space of real K3 surfaces obtained as double covers of the plane ramified along a real plane sextic curve.
The values of $s_{\max,\R}$ for $9\le d\le12$ are borrowed from Theorem \ref{t-nodecusp} below.\\

\medskip\noindent
{\bf Patchworking curves with nodes and cusps.}
\medskip

The patchworking construction \index{construction!patchworking}\index{patchworking!construction}gives an asymptotically proper answer to the existence problem for real and complex plane curves with nodes and cusps.
If we restrict our attention to real and complex plane curves with nodes and cusp which belong to $T$-smooth equisingular families, then this construction, in view of (\ref{regex}), provides an asymptotically complete answer (see \cite[Theorems 2.2, 3.3, and 4.1]{Sh12} and \cite[Section 4.5.2.2]{GLS3}):

\begin{theorem}
\index{existence!of nodes and cusps}
For any non-negative integers $d$, $n$, $k$ such that
\begin{equation}
n+2k\,\le\,\frac{d^2\!-4d+6}{2}\ ,\label{e32}
\end{equation}
there exists a (complex)
plane irreducible curve with $n$ nodes and $k$ cusps as only singularities. Moreover, the result is
asymptotically $T$-smooth optimal, i.e., up to linear terms in $d$ no more nodes and cusps are possible on a  curve belonging to a $T$-smooth $ESF$.
\end{theorem}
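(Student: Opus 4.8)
The strategy is to realize the required curve by the patchworking construction of Theorem~\ref{t-patchworking}, building it from small local pieces whose singularity types combine to give exactly $n$ nodes and $k$ cusps. First I would record what the target inequality~(\ref{e32}) means in terms of the expected dimension: a node contributes $\tau^{es}(A_1)=1$ and a cusp contributes $\tau^{es}(A_2)=2$ to the conditions imposed, so the inequality $n+2k\le\frac{d^2-4d+6}{2}$ is precisely the statement that the $T$-smoothness bound~(\ref{regex}) is satisfied up to the linear terms in $d$ (the full expected-dimension count~(\ref{expdim}) being $\frac{d^2+3d}{2}-n-2k\ge0$, which is weaker). This tells me the right quantity to aim for in the patchworking: I must pack interior lattice points of the triangle $T_d=\conv\{(0,0),(d,0),(0,d)\}$, which number $\frac{(d-1)(d-2)}{2}$, and convert a prescribed number of them into cusps and the rest into nodes, losing only $O(d)$ points to boundary effects (whence the shape $\frac{d^2-4d+6}{2}$ rather than $\frac{(d-1)(d-2)}{2}$).

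\medskip
Next I would set up the combinatorial skeleton. I subdivide $T_d$ into convex lattice tiles, using two kinds of building blocks: triangles of Chebyshev type $\conv\{(0,0),(0,2),(m,1)\}$ as in the proof of Theorem~\ref{t-realnode}, each carrying $m-1$ nodes via the polynomial~(\ref{eq:chebyshev}), together with a supply of small tiles each equipped with a polynomial defining a single ordinary cusp $A_2$ in $(\CC^*)^2$. The key design requirement is a cusp-carrying patchworking pattern $(\Delta_i,\partial\Delta_{i,+},F_i)$ that is $\mathcal S$-transversal for $\mathcal S$ the (topological or analytic) equivalence of the cusp; once such a local piece is in hand, I can distribute $k$ copies of it among the tiles and fill the remaining interior points with nodal Chebyshev tiles. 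The truncation-matching conditions~(i) are arranged by the scaling substitution $F^{(i)}(x,y)\mapsto\alpha_iF^{(i)}(\beta_ix,\gamma_iy)$ exactly as in the nodal proof, and the adjacency graph is oriented acyclically so that each pattern's incoming boundary $\partial\Delta_{i,+}$ is well-defined.

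\medskip
With the local data verified, Theorem~\ref{t-patchworking} glues the $F_i$ into a single polynomial $F$ with Newton polygon $T_d$, hence a plane curve of degree $d$, whose singular set in $(\CC^*)^2$ is $\bigcup_iS(F_i)$, i.e.\ exactly $n$ nodes and $k$ cusps, and moreover the resulting equisingular family is $T$-smooth at $\{F=0\}$. Irreducibility over $\CC$ follows because the construction produces a curve varying in a $T$-smooth family containing reducible degenerations that can be smoothed, or more directly from the connectedness of the dual graph of the subdivision (as in Theorem~\ref{t-Severi}, one smooths the splitting into components); I would argue this by the same deformation-persistence mechanism emphasized in Remark~\ref{r-tsmoothness}. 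Finally, the asymptotic $T$-smoothness optimality is the matching upper bound: inequality~(\ref{regex}) forces $n+2k\le\frac{d^2+3d}{2}$ for any curve lying in a $T$-smooth $ESF$, so no asymptotically larger value of $n+2k$ is attainable within $T$-smooth families, and the two bounds agree up to linear terms in $d$.

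\medskip
The main obstacle is the construction and transversality verification of the cusp-carrying local piece. In the nodal case of Theorem~\ref{t-realnode} the $\mathcal S$-transversality of every pattern was automatic, but cusps impose two conditions rather than one, so I expect to need a carefully chosen tile together with an explicit peripherally nondegenerate polynomial defining a single $A_2$ whose semiuniversal deformation meets the patchworking deformation space transversally; this is precisely where the reference to \cite[Theorems 2.2, 3.3, and 4.1]{Sh12} does the real work, and where the boundary losses that degrade $\frac{(d-1)(d-2)}{2}$ to $\frac{d^2-4d+6}{2}$ are incurred. The bookkeeping that the cusp-tiles plus Chebyshev nodal-tiles can be packed into $T_d$ realizing every admissible pair $(n,k)$ is routine but must be checked, while the transversality of the cusp pattern is the genuinely nontrivial point.
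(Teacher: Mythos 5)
Your proposal follows essentially the same route as the paper: the paper gives no proof of this theorem beyond citing \cite[Theorems 2.2, 3.3, and 4.1]{Sh12} and \cite[Section 4.5.2.2]{GLS3}, and the technique it does exhibit nearby --- in the proof of Theorem~\ref{t-nodecusp}(1), where explicit quadrangular tiles each carrying one cusp are patchworked and $\mathcal S$-transversality is checked by comparing the lattice length of the outgoing boundary with the number of cusps in $(\CC^*)^2$ --- is exactly the mixed nodal/cuspidal tiling you outline, with the hard local transversality deferred to \cite{Sh12} just as you do. Your derivation of the asymptotic optimality from (\ref{regex}) (a node contributing $\tau^{es}(A_1)=1$ and a cusp $\tau^{es}(A_2)=2$, giving $n+2k\le\frac{d^2+3d}{2}$ on any $T$-smooth $ESF$, which differs from (\ref{e32}) only by a term linear in $d$) likewise matches the paper's reasoning.
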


\begin{theorem}\label{t-nodecusp}\index{existence!of real nodes and cusps}
(1) For any $d\ge3$ and any positive integer $c$ such that
\begin{equation}c\le\frac{d^2-3d+4}{4}\ ,\label{eq:realcusp}\end{equation}
there exists a real plane curve of degree $d$ with $c$ real cusp as its only singularities.

(2) There exists a linear in $d$ polynomial $\psi(d)$ such that, for any $d\ge3$ and nonnegative integers $n_h,n_e,n_{im},c_{re},c_{im}$ with
\begin{equation}n_h+n_e+2n_{im}+2c_{re}+4c_{im}\le\frac{d^2}{2}+\psi(d),\label{eq:realnodescusps}\end{equation}
there is a real plane curve of degree $d$ having $n_h$ hyperbolic nodes, $n_e$ elliptic nodes, $n_{im}$ pairs of complex conjugate nodes, $c_{re}$ real cusps, and $c_{im}$ pairs of complex conjugate cups as its only singularities.

\smallskip
\noindent Moreover, these curves correspond to $T$-smooth
germs of the respective equisingular families of curves with
cusps in (1) resp. with nodes and cusps in (2) and the bounds in (\ref{eq:realcusp}) and  (\ref{eq:realnodescusps})
are asymptotically optimal w.r.t. $T$-smooth $ESF$.
\end{theorem}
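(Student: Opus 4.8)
The plan is to prove both existence statements by the patchworking construction of Theorem~\ref{t-patchworking}, in direct analogy with the nodal case of Theorem~\ref{t-realnode}, and to deduce $T$-smoothness together with asymptotic optimality from the patchworking theorem and the regular--existence inequality (\ref{regex}). The argument splits into four stages: (a) produce elementary real polynomials supported on small lattice triangles whose curves in $(\CC^*)^2$ carry exactly one cusp, resp.\ one node of prescribed real type; (b) tile $T_d=\conv\{(0,0),(d,0),(0,d)\}$ by such triangles in a convex subdivision so that the number of cusp--tiles attains the claimed leading term; (c) glue by Theorem~\ref{t-patchworking}; and (d) use $T$-smoothness together with the independent--smoothing principle of Remark~\ref{r-tsmoothness} to descend to \emph{all} smaller configurations.

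First I would manufacture the building blocks. For a cusp one seeks a lattice triangle $\Delta$ and a real polynomial $F$ with Newton triangle $\Delta$ that is peripherally nondegenerate (condition (ii)) and defines a curve in $(\CC^*)^2$ whose only singularity is one ordinary cusp (condition (iii)); choosing the coefficients so that the cusp is real, resp.\ one of a conjugate pair, yields either a real cusp in $(\R^*)^2$ or a pair of complex conjugate cusps. The Chebyshev--type block (\ref{eq:chebyshev}) from the proof of Theorem~\ref{t-realnode} supplies hyperbolic, elliptic or imaginary nodes by the analogous sign and reality choices. The decisive constraint is the lattice \emph{size} of the cusp tile: since $\tau^{es}(A_2)=2$ while $\tau^{es}(A_1)=1$, a cusp tile must occupy roughly twice the lattice area of a node tile, and this is exactly what forces the coefficients $2$ in front of $c$ (resp.\ $2c_{re}$ and $4c_{im}$) and caps the cusp count near $d^2/4$. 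With these blocks in hand I would subdivide $T_d$ into their images under unimodular lattice automorphisms, using rescalings $F^{(i)}\mapsto\alpha_iF^{(i)}(\beta_ix,\gamma_iy)$ with positive $\alpha_i,\beta_i,\gamma_i\in\R$ to equate the truncations of neighbouring polynomials on shared edges as in Theorem~\ref{t-realnode}, and filling the residual lattice area by elementary triangles carrying smooth arcs; the linear terms $-3d$ in (\ref{eq:realcusp}) and $\psi(d)$ in (\ref{eq:realnodescusps}) absorb the boundary loss of this tiling.

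Next I would verify the $\mathcal S$-transversality of every cusp and node patchworking pattern $(\Delta_i,\partial\Delta_{i,+},F^{(i)})$ for the chosen real topological (resp.\ analytic) equivalence, and then invoke Theorem~\ref{t-patchworking} to obtain a real polynomial $F$ with Newton polygon $T_d$, hence a real plane curve $C$ of degree $d$ with $S(F)=\bigcup_iS(F^{(i)})$ and $T$-smooth at $\{F=0\}$. Since the family is $T$-smooth, the equisingular germs attached to the individual singular points meet transversally, so, exactly as in the Severi argument of Theorem~\ref{t-Severi} and Remark~\ref{r-tsmoothness}, any prescribed subset of the cusps and nodes can be independently smoothed out while the remaining ones and their real types persist. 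This produces every admissible tuple $(n_h,n_e,n_{im},c_{re},c_{im})$ below the maximal configuration, and in the cusp--only case every $c$ below the maximum, giving the sufficiency in both (1) and (2). Irreducibility over $\CC$ is arranged by choosing the subdivision so that the glued curve is connected with the correct genus count, a generic smoothing within the $T$-smooth family then being unable to split off a component without violating the genus/Bézout bounds (\ref{genbound}), (\ref{mubound}).

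The asymptotic optimality is immediate from the regular--existence bound (\ref{regex}): for a $T$-smooth $ESF$ one has $\sum\tau^{es}(S_i)\le\frac{d^2+3d}{2}$, and because $\tau^{es}(A_1)=1$, $\tau^{es}(A_2)=2$, and each imaginary singularity is counted together with its conjugate partner, the left-hand side equals precisely $n_h+n_e+2n_{im}+2c_{re}+4c_{im}$ (resp.\ $2c$); hence no $T$-smooth family can surpass the constructed bounds by more than a term linear in $d$. I expect the main obstacle to be the verification of $\mathcal S$-transversality for the cusp tiles: unlike the node, where the Chebyshev block (\ref{eq:chebyshev}) renders transversality transparent, a cusp imposes two equisingularity conditions, and one must choose the incoming boundary $\partial\Delta_{i,+}$ of each cusp tile so that the local deformations induced by the gluing are unobstructed, which is the content of \cite[Theorem 4.2]{Sh12}. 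Secondary difficulties are producing a convex tiling dense enough to reach the $d^2/4$ leading coefficient for cusps and arranging the reality of the blocks so that every distribution among hyperbolic, elliptic and imaginary types, as well as real versus conjugate cusps, is realized simultaneously.
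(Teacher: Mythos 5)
Your plan reproduces the paper's proof of part (1) in all structural respects: convexly subdivide $T_d$ into small tiles each carrying one real cusp, glue by Theorem~\ref{t-patchworking}, obtain all smaller values of $c$ by smoothing inside the resulting $T$-smooth family as in Remark~\ref{r-tsmoothness}, and read off asymptotic optimality from (\ref{regex}) using $\tau^{es}(A_1)=1$, $\tau^{es}(A_2)=2$. Part (2) is likewise only referenced in the paper (to \cite{Sh12} and \cite[Section 4.5.2.2]{GLS3}), so your sketch there is on the same footing as the original.

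The genuine gap is that you never exhibit the cusp tile, and that is precisely where the content of the bound $\frac{d^2-3d+4}{4}$ lies. Your heuristic ``$\tau^{es}(A_2)=2$, so a cusp tile must occupy twice the area of a node tile'' only shows that Euclidean area $2$ is \emph{necessary} for a density of one cusp per tile in a $T$-smooth family; the theorem requires a tile of area exactly $2$ that actually \emph{realizes} one real cusp in $(\R^*)^2$ and is peripherally nondegenerate, and your proposed lattice \emph{triangles} are not what works (note, e.g., that the cusp of $y^2=x^3$ sits at the origin, outside the torus). The paper instead uses the lattice \emph{quadrangles} $Q_1,Q_2$ of Figure~\ref{fig4}(b), the Newton polygons of $F_1(x,y)=\alpha y+\beta x+\gamma x^2+a_{11}xy+a_{12}xy^2$ and $F_2(x,y)=\alpha y^2+\beta x+\gamma x^2y+b_{11}xy+b_{12}xy^2$, each of area $2$, obtained by truncating a real cuspidal cubic tangent to both coordinate axes; producing these is the decisive constructive input. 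Conversely, the step you single out as the main obstacle --- $\mathcal S$-transversality of the cusp patterns --- is routine once the tiles are chosen: orienting the adjacency graph so that $\partial_+$ consists of the two lower sides of each quadrangle, the lattice length of the remaining boundary is $2>1=$ the number of cusps in $(\CC^*)^2$, which is exactly the sufficient criterion of \cite[Theorem 4.1(1)]{Sh12}. So your proposal is the right skeleton, but the key building block on which the coefficient $\tfrac14$ rests is asserted rather than constructed.
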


\begin{proof}
We prove here the part (1) of Theorem \ref{t-nodecusp}, referring to the references above for the rest. Consider the subdivision of the lattice triangle $T_d=\conv\{(0,0),(d,0),(0,d)\}$ into the lattice quadrangles an triangles depicted in Figure \ref{fig4}(a). It is not difficult to show that the subdivision is convex.

\begin{figure}
\begin{center}
\setlength{\unitlength}{0.6cm}
\begin{picture}(16,12)
\thicklines
\put(0,1){\vector(1,0){11}}
\put(0,1){\vector(0,1){11}}
\put(0,11){\line(1,-1){10}}
\put(0,8){\line(1,-1){1}}
\put(0,6){\line(1,-1){1}}
\put(0,4){\line(1,-1){1}}
\put(0,2){\line(1,-1){1}}
\put(2,3){\line(1,-1){1}}
\put(2,5){\line(1,-1){1}}
\put(2,7){\line(1,-1){1}}
\put(4,6){\line(1,-1){1}}
\put(4,4){\line(1,-1){1}}
\put(4,2){\line(1,-1){1}}
\put(6,3){\line(1,-1){1}}
\put(1,9){\line(1,0){1}}
\put(1,7){\line(1,0){1}}
\put(1,5){\line(1,0){1}}
\put(1,3){\line(1,0){1}}
\put(3,2){\line(1,0){1}}
\put(3,4){\line(1,0){1}}
\put(3,6){\line(1,0){1}}
\put(5,5){\line(1,0){1}}
\put(5,3){\line(1,0){1}}
\put(7,2){\line(1,0){1}}
\put(0,2){\line(1,1){1}}
\put(0,4){\line(1,1){1}}
\put(0,6){\line(1,1){1}}
\put(0,8){\line(1,1){1}}
\put(2,7){\line(1,1){1}}
\put(2,5){\line(1,1){1}}
\put(2,3){\line(1,1){1}}
\put(2,1){\line(1,1){1}}
\put(4,2){\line(1,1){1}}
\put(4,4){\line(1,1){1}}
\put(6,3){\line(1,1){1}}
\put(6,1){\line(1,1){1}}
\put(0,10){\line(1,0){1}}
\put(0,10){\line(1,-1){1}}
\put(4,6){\line(0,1){1}}
\put(4,6){\line(1,0){1}}
\put(8,2){\line(0,1){1}}
\put(8,2){\line(1,0){1}}
\put(8,2){\line(1,-1){1}}
\put(7,2){\line(1,-1){1}}
\put(3,2){\line(1,-1){1}}
\thinlines
\put(1,3){\line(1,-2){1}}
\put(1,5){\line(1,-2){1}}
\put(1,7){\line(1,-2){1}}
\put(1,9){\line(1,-2){1}}
\put(3,8){\line(1,-2){1}}
\put(3,6){\line(1,-2){1}}
\put(3,4){\line(1,-2){1}}
\put(5,3){\line(1,-2){1}}
\put(5,5){\line(1,-2){1}}
\put(7,4){\line(1,-2){1}}
\dashline{0.2}(0,9)(1,9)
\dashline{0.2}(0,8)(3,8)
\dashline{0.2}(0,7)(1,7)
\dashline{0.2}(2,7)(4,7)
\dashline{0.2}(0,6)(3,6)
\dashline{0.2}(0,5)(1,5)
\dashline{0.2}(2,5)(5,5)
\dashline{0.2}(0,4)(3,4)
\dashline{0.2}(4,4)(7,4)
\dashline{0.2}(0,3)(1,3)
\dashline{0.2}(2,3)(5,3)
\dashline{0.2}(6,3)(8,3)
\dashline{0.2}(0,2)(3,2)
\dashline{0.2}(4,2)(7,2)
\dashline{0.2}(1,1)(1,10)
\dashline{0.2}(2,1)(2,9)
\dashline{0.2}(3,1)(3,8)
\dashline{0.2}(4,1)(4,6)
\dashline{0.2}(5,1)(5,6)
\dashline{0.2}(6,1)(6,5)
\dashline{0.2}(7,1)(7,4)
\dashline{0.2}(8,1)(8,2)
\dashline{0.2}(9,1)(9,2)
\put(9.9,0.4){$d$}
\put(-0.5,10.9){$d$}
\put(6,-1){\text{\rm (a)}}

\thicklines
\put(13,1){\vector(1,0){4}}
\put(13,1){\vector(0,1){4}}
\put(13,7){\vector(1,0){4}}
\put(13,7){\vector(0,1){4}}
\put(13,2){\line(1,-1){1}}
\put(13,2){\line(1,1){1}}
\put(13,9){\line(1,-2){1}}
\put(13,9){\line(1,0){1}}
\put(14,3){\line(1,-2){1}}
\put(14,7){\line(1,1){1}}
\put(14,9){\line(1,-1){1}}
\thinlines
\dashline{0.2}(13,2)(15,2)
\dashline{0.2}(13,3)(14,3)
\dashline{0.2}(14,1)(14,3)
\dashline{0.2}(15,1)(15,2)
\dashline{0.2}(13,8)(15,8)
\dashline{0.2}(14,7)(14,9)
\dashline{0.2}(15,7)(15,8)
\dashline{0.2}(13,4)(16,1)
\dashline{0.2}(13,10)(14,9)
\dashline{0.2}(15,8)(16,7)
\put(15,4){$Q_1$}
\put(15,10){$Q_2$}
\put(15,-1){\text{\rm (b)}}

\end{picture}

\vspace{20pt}
\caption{Construction of real curves with real cusps}\label{fig4}
\end{center}
\end{figure}
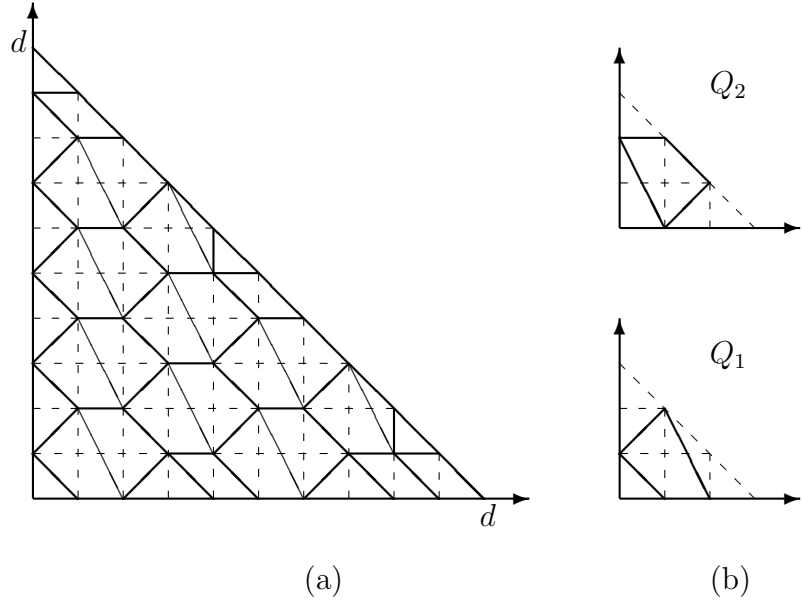
It is easy to see that, for any given $\alpha,\beta,\gamma\in\R^*$, there exist real polynomials
\begin{align}F_1(x,y)&=\alpha y+\beta x+\gamma x^2+a_{11}xy+a_{12}xy^2,\nonumber\\
F_2(x,y)&=\alpha y^2+\beta x+\gamma x^2y+b_{11}xy+b_{12}xy^2,\nonumber\end{align}
with Newton quadrangles $Q_1,Q_2$ (see Figure \ref{fig4}(b)) which define curves with a real cusp in $(\R^*)^2$.
Both curves coincide with the real cuspidal cubic tangent to the coordinate axes in an appropriate way. Thus, we can associate compatible real polynomials with each tile of the subdivision so that the polynomials for the translates of $Q_1$ and $Q_2$ will define real curves with a real cusp in $(\R^*)^2$. Orient the adjacency graph of the subdivision so that, for each pattern $(\Delta_i,\partial_+\Delta_i,G_i)$, the part of the boundary $\partial_+\Delta_i$ will consist of the two lower sides of each translate of $Q_1$ and $Q_2$. The lattice length of the rest of the boundary is $2$, which is greater than $1$, the number of cusps in $(\CC^*)^2$, which finally yields the transversality of each pattern (see \cite[Theorem 4.1(1)]{Sh12} or \cite[Proposition 4.5.2]{GLS3}). By Theorem \ref{t-patchworking} there exists a real curve of degree $d$ with real cusps as its only singularities, where the number of cusps equals $\left[\frac{d^2-3d+4}{4}\right]$, the number of translates of $Q_1$ and $Q_2$ in the subdivision. Since the resulting curve belongs to a $T$-smooth equisingular family, it can be deformed with smoothing out prescribed cusps.
\end{proof}

We note that the inequalities (\ref{e32}), (\ref{eq:realcusp}) and (\ref{eq:realnodescusps}) differ, w.r.t. the number of cusps, from the restriction of the genus bound (\ref{genbound}) by a factor $\frac{1}{2}$ at $d^2$ and a term linear in $d$, showing that the result is asymptotically proper.

\section{Plane curves with arbitrary singularities}\label{sec:3}

In this section we discuss curves with special singularities (ordinary multiple points, simple singularities) as well as curves with arbitrary singularities up to topological or  analytic equivalence.
Moreover, the bounds based on the Bogomolov-Miyaoka-Yau inequality are mainly restricted to simple singularities and semi-quasihomogeneous singularities (we discuss this in more detail in Section \ref{ssec:3.2}). Thus, in general we are left only with the genus bound, the Pl\"ucker bounds, and the spectral bound.

\subsection{Curves of small degrees}\label{ssec:3.1}

%
%
%

For degrees $\le6$ the possible collections of singularities of irreducible complex plane curves are classified.

The fact that an irreducible real or complex cubic may have either a node $A_1$ or a cusp $A_2$ was known already to Newton \cite{N1,N2,N3} (for a modern treatment of Newton's study, see \cite{KW}).

Collections of singularities of complex irreducible quartic curves can easily be classified by manipulating  with equations or by using quadratic Cremona transformations, and all this has been known to algebraic geometers of the 19-th century. Moreover, it can be shown that each collection of singularities defines a smooth irreducible subvariety of expected dimension in the space $\PP^{14}$ of plane projective quartics (see, for instance, \cite{BG} and \cite{W1}). The classification of real singular quartic curves was completed in \cite{Gu1} (see also \cite{KW1} for more details as well as for the classification of real singular affine quartics)\footnote{Both papers contain much more material: they classify all possible dispositions of singular points on the real point set of the quartic curve.}.

Still, the classification of collections of singularities of irreducible plane quintic curves can be reached by elementary methods. The complete classification of singularities of plane quintics together with the statement that each collection of singularities defines a smooth irreducible equisingular family of expected dimension can be found in \cite{De1} and \cite[Section 7.3]{De2} (se also \cite{Gu} and \cite{W2} for interesting particular examples of singular quintics).

The case of plane sextic curves is the first non-elementary one. The study of plane sextics heavily relies of a thorough investigation of K3 surfaces appearing as double covers of the plane ramified along the considered sextic curve. On the other hand, it reveals a highly interesting new phenomenon - the existence of the so-called {\em Zariski pairs}\index{pair!Zariski}, i.e., pairs of curves having the same collection of singularities, but belonging to different components of the equisingular family. The first example presenting two sextic curves with $6$ ordinary cusps that have non-homeomorphic complements in the projective plane was found by Zariski \cite[page 214]{Za2}.
The complete classification of collections of singularities (up to topological equivalence) can be found in \cite{AG} and \cite[Section 7.2]{De2} (various particular cases have been investigated in \cite{U1,U2,U3}). The classiication of real singularities of real sextics is not completed yet (for the case of complex and real cusps, see Section \ref{ssec:2.2} above).

\subsection{Curves with simple, ordinary, and semi-quasihomogeneous singularities}\label{ssec:3.2}

%
%
%

In the present section we construct equisingular families of curves with many simple resp. ordinary resp. semi-quasihomogeneous singularities  and compare their number with the known necessary bounds. Again, by means of the patchworking construction we are able construct families such that the number of singularities is asymptotically optimal resp. proper. The results presented in Theorem  \ref{t-ordinary1},  \ref{t-sqh} and \ref{t-squ1} are new.

\medskip
\noindent
{\bf Curves with simple singularities.} The patchworking construction, essentially used in Section \ref{sec:2} for the construction of curves with real and complex nodes and cusps, works equally well in the case of arbitrary simple singularities $A_n$, $n\ge1$, $D_n$, $n\ge4$, $E_n$, $n=6,7,8$. The results of \cite{ShW,Wes1} on the existence of plane complex curves with simple singularities can be summarized in the following statement (cf. \cite[Theorem 4.5.5]{GLS3})

\begin{theorem}\label{t-simple1}\index{existence!of simple singularities}
(1) For any simple singularity $S$, there exists a linear polynomial
$\varphi_S(d)$ such that the inequality
\begin{equation}k\mu(S)\le\frac{d^2}{2}+\varphi_S(d)\label{eq:simple1}\end{equation}
is sufficient for the existence of an irreducible complex plane curve of degree $d$ having $k$ isolated
singular points of type $S$ as its only singularities and belonging to a $T$-smooth ESF.

(2) Furthermore, for any integer $m\ge1$, there exists a linear polynomial $\psi^{simple}_m(d)$ such that the inequality
\begin{equation}\sum_{i=1}^r\mu(S_i)\le\frac{d^2}{2}+\psi^{simple}_m(d),\label{eq:simple2}\end{equation}
with arbitrary $r$ and simple singularities $S_1,...,S_r$ with Milnor numbers $\le m$, is sufficient for the existence of an irreducible complex plane curve of degree $d$ having $r$ isolated singular points of types $S_1$, ..., $S_r$, respectively, as its only singularities and belonging to a $T$-smooth ESF.
\end{theorem}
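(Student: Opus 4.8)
Theorem \ref{t-simple1} asserts the existence of complex plane curves of degree $d$ with many simple singularities of a prescribed type, via a $T$-smooth equisingular family, provided the total Milnor number is bounded by roughly $d^2/2$ plus a term linear in $d$. This is exactly the kind of asymptotically optimal existence statement that the patchworking method is designed to produce, as already demonstrated for nodes and cusps in Theorems \ref{t-realnode} and \ref{t-nodecusp}. My plan is to follow that same template.

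\textbf{Strategy.} The plan is to apply the patchworking theorem (Theorem \ref{t-patchworking}) to a suitable convex subdivision of the lattice triangle $T_d=\conv\{(0,0),(d,0),(0,d)\}$. First I would treat part (1), the single-type case. The goal is to tile $T_d$ by (lattice-equivalent copies of) a fixed ``building block'' polygon $\Delta_0$ on which one can place a peripherally nondegenerate polynomial $F_0$ defining a curve in $(\CC^*)^2$ with exactly one singular point of the given type $S$. The key numerical point is that a block of lattice area proportional to $\mu(S)$ can carry one singularity of type $S$: heuristically, an $A_n$, $D_n$, or $E_n$ singularity imposes $\tau^{es}(S)\sim\mu(S)$ conditions, so packing $k$ such singularities needs total room $\sim k\mu(S)$, and $T_d$ has area $d^2/2$. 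Choosing $\Delta_0$ of the right normalized area and tiling $T_d$ by $k$ translates of $\Delta_0$ then forces $k\mu(S)\le d^2/2+\varphi_S(d)$, with the linear term $\varphi_S(d)$ absorbing the boundary waste of the tiling. For each block I would exhibit an explicit $F_0$ (analogous to the Chebyshev polynomial $(\ref{eq:chebyshev})$ for nodes, or the cuspidal-cubic truncations $F_1,F_2$ for cusps) realizing $S$ in the torus; existence of such polynomial normal forms of small degree for every simple type is classical and is precisely the input from \cite{ShW,Wes1}. Then I would rescale the blocks by $F^{(i)}\mapsto\alpha_iF^{(i)}(\beta_ix,\gamma_iy)$ to match truncations on common edges, orient the adjacency graph acyclically, and verify the $\mathcal S$-transversality of each pattern $(\Delta_i,\partial\Delta_{i,+},F^{(i)})$.

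\textbf{The transversality / $T$-smoothness input.} The crucial and most delicate step is the $\mathcal S$-transversality of each patchworking pattern, since this is exactly what both makes Theorem \ref{t-patchworking} applicable \emph{and} guarantees the resulting $ESF$ is $T$-smooth at $\{F=0\}$. For nodes and cusps this reduced to a lattice-length count on the unoriented boundary (the count $2>1$ in the proof of Theorem \ref{t-nodecusp}), but for a general simple singularity $S$ the transversality condition is genuinely a cohomology-vanishing statement about the zero-dimensional scheme attached to $S$ and the incoming boundary edges of the block. I expect this to be the main obstacle: one must arrange the block $\Delta_0$, the polynomial $F_0$, and the orientation so that the relevant truncated semiuniversal deformation surjects onto the deformations prescribed along $\partial\Delta_{i,+}$. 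The honest resolution is to cite the explicit block constructions and transversality verifications of \cite{ShW,Wes1}, which carry out this check type-by-type for all $A_n,D_n,E_n$; these furnish the blocks together with the required $\mathcal S$-transversality, and Theorem \ref{t-patchworking} then delivers both the curve and its $T$-smoothness. Finally, because the family is $T$-smooth, Remark \ref{r-tsmoothness} lets me smooth out any chosen subset of the singularities, so realizing the bound with equality yields all smaller configurations as well.

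\textbf{Part (2).} For the mixed-type statement I would use blocks of several shapes simultaneously—one family of blocks per simple type with Milnor number $\le m$—and tile $T_d$ by a combination of them, one block per prescribed singularity $S_i$. The total lattice area consumed is $\sum_i(\text{area of block for }S_i)\sim\sum_i\mu(S_i)$, so the same area bound $\sum_i\mu(S_i)\le d^2/2+\psi^{simple}_m(d)$ governs when the tiling fits inside $T_d$, with $\psi^{simple}_m(d)$ now depending on $m$ because the bounded Milnor numbers control the maximal block size and hence the boundary/overhead waste uniformly in the mixed tiling. The transversality of every pattern is inherited blockwise from part (1), the matching of truncations along shared edges is handled by the same rescaling, and an acyclic orientation of the adjacency graph always exists. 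Invoking Theorem \ref{t-patchworking} once more produces an irreducible degree-$d$ curve with the prescribed $S_1,\dots,S_r$ belonging to a $T$-smooth $ESF$, completing the proof.
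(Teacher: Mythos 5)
Your proposal follows essentially the same route as the paper, which itself gives no self-contained proof but defers to \cite{ShW,Wes1} and to the patchworking template of Theorem \ref{t-ordinary1}: tile $T_d$ by blocks of lattice area proportional to $\mu(S_i)$ carrying local models of the simple singularities, verify the transversality of each pattern (the type-by-type work done in \cite{ShW,Wes1}), and apply Theorem \ref{t-patchworking} to obtain the curve together with $T$-smoothness of the $ESF$. You correctly identify the transversality of the patterns as the genuinely delicate step and resolve it by the same citation the authors use, so the argument matches the paper's.
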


Notice that this existence statement is \emph{asymptotically optimal} as long as we consider curves belonging to $T$-smooth $ESF$. Indeed, the codimension of a $T$-smooth $ESF$ in the considered cases equals the left-hand side in (\ref{eq:simple1}) or (\ref{eq:simple2}) (see \cite[Corollary 6.3(ii)]{GK} and \cite[Theorem 2.2.40]{GLS3}), and hence satisfies
$$k\mu(S)\le\frac{d(d+3)}{2},\quad\text{resp.}\quad\sum_{i=1}^r\mu(S_i)\le\frac{d(d+3)}{2}.$$
In each case, the difference between the bounds in the necessary and sufficient conditions is linear in $d$. For the proof we refer to \cite{ShW,Wes1} (see also the proof of Theorem \ref{t-ordinary1} below).

The existence of real curves with simple singularities was analyzed in \cite{Wes} along the same lines, though the argument was incomplete: in particular, Lemma 3.1 in \cite{Wes} is wrong as pointed out by E. Brugall\'e. So, in general the problem over the reals remains open.

\smallskip
While the patchworking construction basically resolves the existence problem for curves with simple singularities that impose independent conditions on the coefficients of the defining equation, there are examples of curves with extremely many simple singularities imposing thereby dependent conditions on the curve.

So, the construction invented by Hirano \cite{HA} (which we discussed in connection to plane curves with large number of cusps, Section \ref{ssec:2.2}) applies well also to more complicated cusps $A_n$ (see \cite[Theorem 2]{HA}. Namely, one starts with a smooth conic and, in each step, chooses axes tangent to the current curve and substitutes $(x^{n+1},y^{n+1},z^{n+1})$ for $(x,y,z)$, which results in the following sequence of singular curves:

\begin{theorem}\label{t-An}\index{existence!of $A_n$ singularities}
For any even $n\ge4$, there exists a sequence of irreducible complex plane curves $C_k$, $k=1,2,...$, of degree $d_k=2(n+1)^k$ having
$$s_k=\frac{3(n+1)((n+1)^{2k}-1)}{n(n+2)}$$
singular points of type $A_n$ as their only singularities.
\end{theorem}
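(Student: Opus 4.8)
The plan is to make Hirano's iterated cyclic-covering construction precise and to control both the \emph{number} and the \emph{type} of the resulting singularities by a local analysis of the power map $\pi_{n+1}\colon\PP^2\to\PP^2$, $(x:y:z)\mapsto(x^{n+1}:y^{n+1}:z^{n+1})$, combined with an inductive genericity argument. I would start with $C_0$ a smooth conic inscribed in the coordinate triangle, i.e.\ simply tangent to each of the three lines $\{x=0\},\{y=0\},\{z=0\}$; then $d_0=2=2(n+1)^0$ and $s_0=0$. The inductive hypothesis at stage $k-1$ is that $C_{k-1}$ is irreducible of degree $d_{k-1}=2(n+1)^{k-1}$ with exactly $s_{k-1}$ singular points, all of type $A_n$. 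Given this, I would first choose coordinates so that each coordinate line is a \emph{simple tangent} of $C_{k-1}$ (contact order exactly $2$ at a single smooth point), the three tangency points are distinct and lie in the torus $(\CC^*)^2$, no singular point of $C_{k-1}$ and no coordinate vertex lies on $C_{k-1}$, and all remaining intersections of $C_{k-1}$ with the coordinate lines are transverse. Such a triangle of tangent lines exists generically: the singular tangents form a finite set (as in the Pl\"ucker discussion), and a general point lies on only finitely many tangents of $C_{k-1}$, so three general tangent lines avoid the finitely many singular points and meet $C_{k-1}$ transversally away from their points of tangency. I would then set $C_k:=\pi_{n+1}^{-1}(C_{k-1})$, of degree $(n+1)d_{k-1}=2(n+1)^k$.

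The computational heart is the local classification of $C_k$ along the branch locus $\{xyz=0\}$, using that $\pi_{n+1}$ is a Galois cover with group $(\Z/(n+1))^2$, \'etale of degree $(n+1)^2$ over the torus and with ramification index $n+1$ along each coordinate line. Over a point of $C_{k-1}$ in the torus the cover is \'etale, so each of the $s_{k-1}$ singular points lifts to $(n+1)^2$ points of the identical analytic type $A_n$, while smooth torus points stay smooth. Over a transverse intersection with a coordinate line, writing the branch in a local chart as $v^{n+1}=c\,u+\cdots$ with $c\ne0$ shows the preimage consists of smooth points; over a simple tangency, writing the branch as $v^{n+1}=c\,u^2+\cdots$ gives $n+1$ preimage points, each with local equation analytically equivalent to $u^2+(\mathrm{unit})\,v^{n+1}=0$, i.e.\ of type $A_n$. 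Consequently $C_k$ has exactly
\[
s_k=(n+1)^2\,s_{k-1}+3(n+1)
\]
singular points, all of type $A_n$, and solving this recursion with $s_0=0$ (using $(n+1)^2-1=n(n+2)$) yields the claimed closed form.

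The step I expect to be the main obstacle---and the one that forces $n$ to be even---is irreducibility of $C_k$. I would establish it through the monodromy of the restricted cover: the number of components of $\pi_{n+1}^{-1}(C_{k-1})$ lying over the torus equals the number of orbits of the homomorphism $\rho\colon\pi_1\bigl(C_{k-1}^{\mathrm{sm}}\cap(\CC^*)^2\bigr)\to(\Z/(n+1))^2$ induced by the inclusion into the torus and reduction modulo $n+1$ of $\pi_1((\CC^*)^2)=\Z^2$. A loop around a point where $C_{k-1}$ meets a coordinate line with intersection multiplicity $m$ contributes the corresponding standard generator scaled by $m$, while loops around removed singular points map to $0$. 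For $k\ge2$ the transverse intersections already contribute the generators $(1,0)$ and $(0,1)$, so $\rho$ is surjective and $C_k$ is irreducible; but at the first step the inscribed conic meets each line only in its tangency, of multiplicity $2$, contributing $(2,0)$ and $(0,2)$. These generate $(\Z/(n+1))^2$ exactly when $2$ is invertible modulo $n+1$, i.e.\ when $n+1$ is odd---which is precisely why the statement is restricted to even $n$. (As a sanity check, $n=2$ yields the classical $9$-cuspidal sextic $s_1=3(n+1)=9$.) Finally I would record that the new $A_n$ points arising from tangencies necessarily lie on the coordinate lines, so that when re-choosing the tangent triangle at the next stage one must---and, by the same finiteness argument, generically can---place all singular points in the new torus, thereby closing the induction.
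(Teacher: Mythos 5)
Your proposal is correct and follows essentially the same route as the paper, which presents this result precisely as Hirano's iterated construction (start with a smooth conic, repeatedly choose a tangent coordinate triangle and substitute $(x^{n+1},y^{n+1},z^{n+1})$) and refers to \cite[Theorem 2]{HA} for details. Your local analysis of the cover, the recursion $s_k=(n+1)^2s_{k-1}+3(n+1)$, and the monodromy explanation of why even $n$ is needed for irreducibility all check out and are consistent with the paper's Remark~\ref{r-hirano} that the curves become reducible for odd $n$.
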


\begin{remark}\label{r-hirano}{\em
In fact, Hirano's construction works for odd $n\ge3$ as well with the above formulas for the degree and for the number of $A_n$ singularities, but the curves appear to be reducible.}
\end{remark}

Since a singularity $A_n$ imposes in general $\tau(A_n)=\mu(A_n)=n$ conditions, we obtain that the total number of conditions compared with the dimension of the space of curves of the given degree reveals the following asymptotics:
\begin{equation}\lim_{k\to\infty}\frac{ns_k}{d_k(d_k+3)/2}=\frac{3}{2}-\frac{3}{2(n+2)}
>1\quad\text{for all}\ n\ge2.
\label{eq:hirano}\end{equation}
Thus, the $A_n$ impose dependent conditions and the corresponding equisingular stratum is not $T$-smooth. For instance, in general, we cannot decide whether there exist curves of degree $d_k$ with any number $m<s_k$ of singularities of type $A_n$.
\medskip

Other series of extremal examples exhibit plane curves with one singularity $A_m$ with $m=m(d)$ as large as possible for a given degree $d$. More precisely, each series consists of plane curves of degrees $d\to \infty$ with one $A_{m(d)}$ singularity:
Gusein-Zade and Nekhorochev \cite[Proposition 2]{GZN} constructed a series of curves for which
$$\lim_{d\to\infty}\frac{m(d)}{d(d+3)/2}=\frac{15}{14}=1.0714...\ .$$
Later Cassou-Nogues and Luengo \cite{CNL} obtained another sequence of curves with
$$\lim_{d\to\infty}\frac{m(d)}{d(d+3)/2}=8-4\sqrt{3}=1.0717...\ .$$
The best known result is due to Orevkov \cite[Section 4]{Ore}:

\begin{theorem}\label{t-GZN}\index{existence!of $A_n$ singularities}
There exists a sequence of plane curves of degrees $d_k\to\infty$ as $k\to\infty$ having a singular point of type
$A_{m_k}$ such that
$$\lim_{k\to\infty}\frac{m_k}{d_k(d_k+3)/2}=\frac{7}{6}=1.1666...\ .$$
\end{theorem}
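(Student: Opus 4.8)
The plan is to produce the $A_{m_k}$ points by an explicit recursive construction rather than through any $T$-smoothness machinery, and two preliminary observations already fix the shape of that construction. First, since the target ratio $\frac{7}{6}$ exceeds $1$, the regular-existence inequality (\ref{regex}) (with $\tau^{es}(A_m)=m$) forces every such curve into the irregular, non-$T$-smooth region; hence neither patchworking nor the cohomology-vanishing method can yield it, and the singularity must impose dependent conditions. Second, by the genus formula (\ref{genusformula}) a single $A_{m_k}$ with $m_k\sim\frac{7}{12}d_k^2$ has $\delta\sim\frac{7}{24}d_k^2$, so the geometric genus grows like $\frac{5}{24}d_k^2\to\infty$; the curves are therefore far from rational, and the construction cannot consist of Cremona images of a fixed seed, which would preserve the genus. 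The operation must simultaneously raise the degree, the genus, and the order of one cusp. This points precisely at the iterated \emph{ramified monomial covers} of $\PP^2$ already used by Ivinskis and Hirano (Section \ref{ssec:2.2}, Theorem \ref{t-An}), but tuned so that the whole growth concentrates at a single unibranch point instead of splitting into many cusps.

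Concretely, I would build an infinite family $C_k$ as follows. Start from a seed $C_0$ of small degree carrying exactly one cusp $z_0$ of type $A_{m_0}$ (say a cuspidal cubic, $A_2$), with its cuspidal branch placed in a prescribed position relative to a chosen coordinate triangle. To pass from $C_k$ to $C_{k+1}$, apply a fixed monomial self-map of $\PP^2$ of the unbalanced type $(x\!:\!y\!:\!z)\mapsto(xz^{r-1}\!:\!y^r\!:\!z^r)$ (or a refinement), arranged so that its ramification meets $C_k$ only at $z_k$ and at smooth transverse points. Unlike the balanced map $(x^r\!:\!y^r\!:\!z^r)$, which resolves a cusp at a ramification vertex into $r$ separate cusps, such a map pulls back a single branch $x^2-y^{2\ell+1}$ to $x^2-y^{r(2\ell+1)}$, i.e.\ keeps it \emph{unibranch} and merely raises its order. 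Because a single cover multiplies both $d$ and the cusp order by $r$ and so \emph{lowers} the ratio $m/d^2$, one must interleave it, in the spirit of Hirano's re-choice of tangent axes, with a Cremona-type recentring that restores contact at $z_{k+1}$. The interplay of the cover (genus- and order-raising) and the recentring (ratio-restoring) is what produces a nontrivial recursion.

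The numerical heart is then linear. Recording $v_k=(d_k,\,q_k)$ with $q_k=m_k+1$ (so $\delta_k$ is read off from the Puiseux pair $(2,q_k)$), the composite operation yields $v_{k+1}=Mv_k$ for a fixed nonnegative integer matrix $M$ whose entries come from the local resolution data of the cover and the recentring. The quantity $m_k/d_k^2=(q_k-1)/d_k^2$ then converges to the value prescribed by the Perron eigenvector of $M$; one computes that eigenvector explicitly and checks that, after optimizing the free integer parameters (the cover order $r$ and the ramification/tangency configuration), the maximal attainable limit is $\frac{7}{12}$, i.e.\ $\lim_k m_k/(d_k(d_k+3)/2)=\frac{7}{6}$. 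This arithmetic is mechanical once $M$ and the admissible parameter range are pinned down; the rationality of the answer reflects that the optimum of the eigenvalue ratio is attained at an explicit integral configuration.

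The main obstacle is entirely local and lies in the transition step: proving that the operation creates \emph{exactly one} singular point and that it is again of type $A$. By an embedded resolution (repeated blow-up) analysis of the proper transform of $C_k$ at $z_k$ and at the other base and ramification points one must verify that (a) $C_{k+1}$ stays irreducible over $\CC$, (b) the lifted branch keeps a single Puiseux pair of multiplicity $2$, with no splitting into several branches and no unintended jump in its multiplicity, and (c) no spurious singular points appear along the coordinate triangle, the exceptional configuration, or the remaining ramification. Sustaining (a)--(c) \emph{simultaneously} through infinitely many steps — in particular keeping the cusp unibranch while the global curve remains irreducible — is the delicate part, whereas the degree/genus/Milnor bookkeeping and the eigenvalue computation are comparatively routine. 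I would finally stress that this yields only a lower bound for the extremal single-$A_m$ ratio: the matching upper bound is unknown, and the spectral and log-Bogomolov--Miyaoka--Yau estimates leave a visible gap, in keeping with the survey's theme that the one-singularity problem is still open.
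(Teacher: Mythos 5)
The survey does not actually prove this statement: it is quoted from Orevkov \cite[Section 4]{Ore}, whose construction iterates an explicit birational self-map of the plane adapted to a fixed auxiliary cuspidal curve, with the \emph{degrees} satisfying a linear recursion and the Milnor number then computed separately as an (asymptotically quadratic in $d_k$) function of the degrees. Your preliminary observations are correct and consistent with the paper's surrounding discussion: since $\frac{7}{6}>1$, the bound (\ref{regex}) with $\tau^{es}(A_m)=m$ puts these curves outside the $T$-smooth range, and the genus formula gives $g\sim\frac{5}{24}d_k^2\to\infty$, so no rational (e.g.\ Cremona-image-of-a-fixed-seed) construction can work. Your closing remark that only the lower bound is known also matches the paper.

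The core of your argument, however, contains a step that is wrong as stated, and a gap precisely where the constant $\frac{7}{6}$ would have to be produced. The wrong step is the ``numerical heart'': if $v_k=(d_k,\,m_k+1)$ satisfies a fixed linear recursion $v_{k+1}=Mv_k$ with a nonnegative integer matrix $M$, then both coordinates grow like $\lambda^k$ for the Perron eigenvalue $\lambda>1$ of $M$ (and $\lambda>1$ is forced by $d_k\to\infty$), so $m_k/d_k^2\sim C\lambda^{-k}\to 0$. No choice of $M$, of its eigenvector, or of your ``free integer parameters'' can make $m_k/d_k^2$ converge to a \emph{nonzero} limit; to get $m_k\sim\frac{7}{12}d_k^2$ the Milnor number must grow quadratically in the degree, which a joint linear recursion on $(d,m)$ cannot deliver --- one needs, e.g., a linear recursion on a vector of degrees with $m_k$ recovered as a quadratic form in them. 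The gap is that the construction is never pinned down at the only point where the theorem's content lives: the cover order, the ``Cremona-type recentring,'' and the matrix $M$ are unspecified, and the claim that the optimum over the admissible parameters is exactly $\frac{7}{12}$ is asserted rather than derived (indeed it cannot be derived from the scheme as set up, by the previous remark). The local issues you flag at the end --- keeping the branch unibranch of multiplicity $2$, irreducibility, and the absence of spurious singularities along the ramification and the coordinate triangle through infinitely many iterations --- are genuinely delicate and also left unproved, but they are secondary: even granting them, the proposal does not reach the stated asymptotic ratio.
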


The ratios of the number of imposed conditions to the dimension of the space of curves of a given degree in all these examples appears to be $>1$; hence, we again observe a non-$T$-smooth equisingular family.
Also, in general, we cannot decide whether there exist curves of degree $d_k$ with an $A_r$ singularity for all $r<m_k$.
\medskip

It is worth to compare the extremal examples by Hirano and Orevkov with the known restrictions. The genus bound (\ref{genbound}) and Pl\"ucker formulas (\ref {plucker}), (\ref{eq:hess}) yield weaker bounds than the Hirzebruch-Ivinskis' and the spectral ones.  Namely, the Hirzebruch-Ivinskis bound (\ref{eq:HI}) combined with the first formula in (\ref{eq:Iv1}) implies that
the limit ratio of the total Milnor number to the dimension of the space of curves of the given degree does not exceed \begin{align}\frac{5(n+1)}{3(n+2)}&\quad\text{in the case of Hirano},\nonumber\\
\frac{5}{3}\;\;\;\;&\quad\text{in the case of Orevkov}.\nonumber\end{align}
By (\ref{eq:spectrum}) the spectra of the singularity $A_n$ and of an ordinary $d$-fold singularity are
$$\left\{\frac{k}{n+1}-\frac{1}{2}\ :\ 1\le k\le n\right\},\quad\left\{\frac{i+j}{d}-1\ :\ 1\le i,j\le d-1\right\},$$
respectively. Applying the semicontinuity of the spectrum on the interval $\qquad$ $(\frac{n}{n+1}-\frac{3}{2},\frac{n}{n+1}-\frac{1}{2}]$, one can easily derive an upper bound of the total Milnor number for the existence of a curve of fixed degree $d$ with $s$ singular points of type $A_n$:
\begin{equation}sn\le\#\left\{(i,j)\in\Z^2\ \big|\ 1\le i,j<d,\ \frac{n}{n+1}-\frac{1}{2}<\frac{i+j}{d}\le\frac{n}{n+1}+\frac{1}{2}\right\}.\label{e111}\end{equation} If we fix $n\ge 2$ and let $d\to\infty$, then we obtain
$$\lim_{d\to\infty}\sup\frac{sn}{d(d+3)/2}\le\frac{3}{2}-\frac{2}{(n+1)^2},$$
which is comparable with the limit ratio in examples of Hirano (\ref{eq:hirano}), showing that, for large $n$ the spectral bound is almost sharp.

If we let $s=1$ and $d\to\infty$ in (\ref{e111}), we will obtain
$$\lim_{d\to\infty}\sup\frac{n}{d(d+3)/2}=\frac{3}{2},$$
which differs from the asymptotical ratio attained in Orevkov's examples, Theorem \ref{t-GZN}, leaving open the question on the sharpness of the spectral bound in the case of one singularity $A_n$.


\medskip\noindent
{\bf Curves with ordinary multiple singular points.}
By an \emph{ordinary (multiple) singular point} 
we understand a singularity consisting of several smooth local branches intersecting  each other transversally. It happens that the patchworking construction \index{construction!patchworking}\index{patchworking!construction}provides an asymptotically optimal existence condition for curves with ordinary multiple points as well, which, moreover, completely covers both the complex and the real case.

\begin{theorem}\label{t-ordinary1} \index{existence!of ordinary singularities}
For a fixed positive integer $m$ the following holds:

(1) There exists a linear polynomial $\psi_{m,fix}^{ordinary}(d)$ such that, for an arbitrary sequence of integers $r_2,...,r_m\ge0$, the inequality
\begin{equation}\sum_{i=2}^m\frac{i(i+1)}{2}r_i\le\frac{d^2}{2}+\psi_{m,fix}^{ordinary}(d)\label{eq:ordinary2}\end{equation}
is sufficient for the existence of an irreducible complex plane curve of degree $d$ in a T-smooth equisingular family, having $r_i$ ordinary singular points of multiplicity $i=2,...,m$ as its only singularities, all of them in general position.

(2) Furthermore, the same inequality (\ref{eq:ordinary2}) is sufficient for the existence of a real plane irreducible curve of degree $d$ in a T-smooth equisingular family, having the given collection of ordinary multiple points in conjugation-invariant general position, when we prescribe the numbers of pairs of imaginary ordinary singular points for each multiplicity $2,...,m$ and prescribe the number of real local branches for each real ordinary singular point.
\end{theorem}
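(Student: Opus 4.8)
The plan is to run the patchworking scheme of Theorems~\ref{t-realnode} and~\ref{t-nodecusp}, with the Chebyshev and cuspidal-cubic tiles replaced by local models for ordinary multiple points. First I would fix, for each multiplicity $i$ with $2\le i\le m$, a building block: a polynomial $G_i$ whose Newton polygon is the standard triangle $T_i=\conv\{(0,0),(i,0),(0,i)\}$ and whose zero set in $(\CC^*)^2$ is a pencil of $i$ concurrent lines in general position. Explicitly one takes $G_i=\prod_{j=1}^i\bigl(\alpha_j(x-x_0)+\beta_j(y-y_0)\bigr)$ with a fixed $(x_0,y_0)\in(\CC^*)^2$ and pairwise distinct, generic slopes $(\alpha_j:\beta_j)$. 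Concurrency forces the $\binom{i}{2}$ pairwise intersections to collapse to the single point $(x_0,y_0)$, so that the only singularity in $(\CC^*)^2$ is one ordinary $i$-fold point; genericity makes the coefficients of $x^i$, $y^i$ and of the constant term nonzero (so the Newton polygon is exactly $T_i$) and makes all three edge truncations have simple roots in $\CC^*$, i.e.\ $G_i$ is peripherally nondegenerate. This supplies property (ii), while (iii) is immediate.

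Next I would build a convex subdivision of $T_d=\conv\{(0,0),(d,0),(0,d)\}$ containing exactly $r_i$ lattice copies of $T_i$ for each $i$, the remaining tiles being small triangles inserted to keep the subdivision convex and to provide the boundary length needed below. The packing is governed by an area count: $\operatorname{area}(T_d)=\tfrac{d^2}{2}$, while each copy of $T_i$ together with the thin band of filler triangles placed along its non-incoming edges consumes an effective budget of $\tfrac{i(i+1)}{2}$; hence such a subdivision exists precisely when (\ref{eq:ordinary2}) holds, the linear term $\psi^{ordinary}_{m,fix}(d)$ absorbing the boundary and rounding losses, which are $O(d)$ because $m$ is fixed. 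After rescaling each tile polynomial by $G_i\mapsto a\,G_i(bx,cy)$ with suitable positive constants I match the truncations on every common edge, securing condition (i).

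Then I would orient the adjacency graph of the subdivision without oriented cycles, let $\partial_+\Delta$ denote the incoming edges of each tile $\Delta$, and verify $\mathcal S$-transversality of each patchworking pattern. This is where the filler band is used: by the criterion of \cite[Theorem~4.1]{Sh12}, transversality for a tile carrying an ordinary $i$-fold point follows once the lattice length available on its non-incoming boundary is large enough relative to that singularity --- the analogue of the length estimate that settles the cusp tiles --- which is precisely what the enlarged size-$i$ tile guarantees. I expect this transversality check to be the main obstacle, since it is the only genuinely local-to-global step and it dictates the precise shape of the tiles and hence the coefficient $\tfrac{i(i+1)}{2}$; the rest is combinatorial bookkeeping. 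Granting it, Theorem~\ref{t-patchworking} yields a polynomial $F$ with Newton polygon $T_d$, hence a plane curve of degree $d$, whose only singularities are the prescribed $r_i$ ordinary $i$-fold points and which lies in a $T$-smooth equisingular family; here (\ref{eq:ordinary2}) is compatible with the nonnegativity of the expected dimension (\ref{expdim}), cf.\ (\ref{regex}). As in Remark~\ref{r-tsmoothness}, $T$-smoothness lets me vary the singular points independently and smooth out any auxiliary singularities, producing an irreducible curve with its ordinary points in general position, which proves~(1).

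For~(2) the identical scheme runs over $\R$. I take each $G_i$ real, assembling it from $i-2s$ real lines and $s$ conjugate pairs of complex lines through a common real point whenever a real $i$-fold point with $2s$ imaginary branches is prescribed, and I choose the subdivision together with the orientation of its adjacency graph to be invariant under the standard conjugation, placing conjugate pairs of tiles for the prescribed pairs of imaginary ordinary points. Since Theorem~\ref{t-patchworking} and the transversality criterion of \cite{Sh12} hold verbatim over the reals (see also \cite{ShW,Wes1} and \cite[Section~4.5.2.2]{GLS3}), the resulting real curve of degree $d$ carries exactly the prescribed real and imaginary ordinary multiple points, with the prescribed numbers of real branches, in conjugation-invariant general position, and again belongs to a $T$-smooth family.
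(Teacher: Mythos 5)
Your overall strategy --- a convex subdivision of $T_d$ into tiles carrying ordinary multiple points, glued by Theorem \ref{t-patchworking} with transversality checked via \cite[Theorem 4.1]{Sh12} --- is the same as the paper's, but your choice of tile creates two genuine gaps. The first concerns part (2): the pairs of complex conjugate ordinary points cannot be obtained by ``placing conjugate pairs of tiles''. Complex conjugation acts on the coefficients of the real glued polynomial $F$, not on the tiles of the Newton subdivision; every tile polynomial is itself real, so a conjugate pair of singular points must arise inside a \emph{single} real tile. A product of lines through one common real point never produces such a pair, while a product of lines through two conjugate points produces $i^2$ unwanted intersection nodes and the wrong Newton polygon. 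This is precisely why the paper's building block is the rectangle $R_i=\conv\{(0,0),(0,i),(i+1,0),(i+1,i)\}$, whose polynomial comes from a pencil of conics through two base points $p_1,p_2$ (either a conjugate pair or a real pair) multiplied by a line: it carries \emph{two} ordinary $i$-fold points per tile --- conjugate, or real with prescribed numbers of conjugate branches --- and the extra nodes created by the auxiliary line are removed beforehand by a separate deformation step (\cite[Theorem, page 31]{Sh4}, \cite[Theorem 6.1(iii)]{GK}) for which your proposal has no counterpart.

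The second gap is that the budget and transversality accounting you defer is not bookkeeping: it is exactly where the coefficient $\frac{i(i+1)}{2}$ must come from, and the triangular tile does not deliver it. The triangle $T_i$ has area $\frac{i^2}{2}$ and only $\frac{(i-1)(i-2)}{2}$ interior lattice points, whereas an ordinary $i$-fold point has $\delta=\frac{i(i-1)}{2}$, so your tile curve (a union of concurrent lines) has negative geometric genus in its linear system. Moreover, packing two such triangles per lattice square of side $i$ would cost $\frac{i^2}{2}$ per singular point and yield the sufficient condition $\sum_i\frac{i^2}{2}r_i\le\frac{d^2}{2}+O(d)$, which for $d\gg m$ contradicts the Alexander--Hirschowitz necessary condition $\sum_i\frac{i(i+1)}{2}r_i\le\frac{d(d+3)}{2}$; hence the $\mathcal S$-transversality must fail for dense packings of your tiles, and the ``thin filler band'' is forced to raise the effective area per point to at least $\frac{i(i+1)}{2}$. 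You give no argument that this threshold is attainable with your local model. The paper's rectangle realizes it exactly and with minimal slack: two $i$-fold points of total contribution $2i$ against an outgoing boundary of lattice length $2i+1$.
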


Note that the necessary existence condition in the setting of Theorem \ref{t-ordinary1} is
$$\sum_{i=2}^m\frac{i(i+1)}{2}r_i\le\frac{d(d+3)}{2},$$
as long as $d$ is big enough, which follows from the Alexander-Hirschowitz theorem \cite[Theorem 1.1]{AH1} (see also \cite[Theorem 3.4.22]{GLS3}).
That is, the bound (\ref{eq:ordinary2}) is asymptotically optimal (even for non-T-smooth families).

\begin{proof}
The first part of the theorem follows, in fact, from the Alexander-Hirschowitz theorem \cite[Theorem 1.1]{AH1} after some routine work ensuring that a generic bivariate polynomial of degree $d$, whose derivatives vanish up to appropriate order at the given points in general position, defines an irreducible curve with only ordinary singularities as prescribed.
The second part, however, is not accessible within this framework, since the control over the real singularity types may require at least $\frac{d^2}{m}$ extra independent conditions, which is not bounded by a linear function of $d$. So, to prove the second statement (and thereby the first one), we apply a suitable version of the patchworking construction.

The main element of the construction consists of a collection of the following patchworking patterns. Fix any $2\le i\le m$ and consider the lattice rectangle $R_i=\conv\{(0,0),(0,i),(i+1,0),(i+1,i)\}$. We claim that there exists a real irreducible polynomial $F_i(x,y)$ with Newton polygon $R_i$ (see Figure \ref{fig3}(a)) which defines a real plane curve having in $(\CC^*)^2$ exactly two singular points:
\begin{itemize}\item either two complex conjugate ordinary singularities of order $i$,
\item or two real ordinary singularities with the prescribed number $j\le i/2$ of pairs of complex conjugate local branches.\end{itemize}
Indeed, in the projective plane $\PP^2$ with coordinates $x,y,z$ consider the pencil of conics passing through the points $(1,0,0)$ and $(0,1,0)$ and through two more points $p_1,p_2\in(\CC^*)^2=\PP^2\setminus\{xyz=0\}$, either a pair of complex conjugate points, or a pair of real generic points. In the case of  complex conjugate $p_1,p_2$, we pick $i$ distinct smooth real conics $Q_1,...,Q_i$ in our pencil, while in the case of real $p_1,p_2$, we pick $i$ smooth conics $Q_1,...,Q_i$ in our pencil so that $i=2j$ of them are real and the others form $j$ pairs of complex conjugate conics. Consider the projective curve $Q_1\cdots Q_i\cdot L=0$, where $L=x+\lambda z$ with a generic real number $\lambda$. This curve has ordinary singularities of order $i$ at $p_1,p_2$, and $(0,1,0)$, an ordinary singularity of order $i+1$ at $(1,0,0)$, and $i$ more nodes, which are intersection points of the line $L=0$ with the conics. There exists a small real deformation of the considered curve that preserves the ordinary singularities at $p_1$, $p_2$, $(1,0,0)$, and $(0,1,0)$ and smoothes out all the extra nodes. This follows directly from \cite[Theorem, page 31]{Sh4} or, after the blowing up $\Sigma\to\PP^2$ of the four ordinary singularities, from \cite[Theorem 6.1(iii)]{GK}, since each component $C$ of the blown-up curve satisfies $-CK_\Sigma>0$, and the nodes do not contribute to the right-hand side of the required inequalities (see also \cite[Theorem 4.4.1(b) (formula (4.4.1.3)), Proposition 4.4.3(b) (formula (4.4.1.10)), and Remark 4.4.212(ii)]{GLS3}). So, we obtain the desired polynomial $F_i$ by substituting $z=1$ into the equation of the above deformed curve.

\begin{figure}
\setlength{\unitlength}{1.0mm}
\begin{picture}(110,80)(10,0)
\thinlines

\put(10,10){\vector(1,0){35}}\put(10,10){\vector(0,1){30}}
\put(55,10){\vector(1,0){70}}\put(55,10){\vector(0,1){70}}

\dashline{1}(55,45)(85,45)\dashline{1}(90,40)(90,35)\dashline{1}(90,35)(95,35)

\thicklines

\put(10,10){\line(1,0){30}}\put(10,10){\line(0,1){20}}
\put(10,30){\line(1,0){30}}\put(40,10){\line(0,1){20}}
\put(55,10){\line(1,0){65}}\put(55,10){\line(0,1){65}}
\put(55,75){\line(1,-1){65}}\put(55,20){\line(1,0){45}}
\put(55,30){\line(1,0){45}}\put(55,40){\line(1,0){30}}
\put(70,20){\line(0,1){20}}\put(85,20){\line(0,1){20}}
\put(100,20){\line(0,1){10}}

\put(7,29){$i$}\put(37,6){$i+1$}\put(51,19){$s$}\put(45,29){$s+i$}
\put(43,39){$s+ki$}\put(37,44){$s+ki+1$}\put(52,74){$d$}\put(119,6){$d$}
\put(25,3){\rm (a)}\put(88,3){\rm (b)}

\end{picture}
\caption{Patchworking of real curves with ordinary singularities}\label{fig3}
\end{figure}
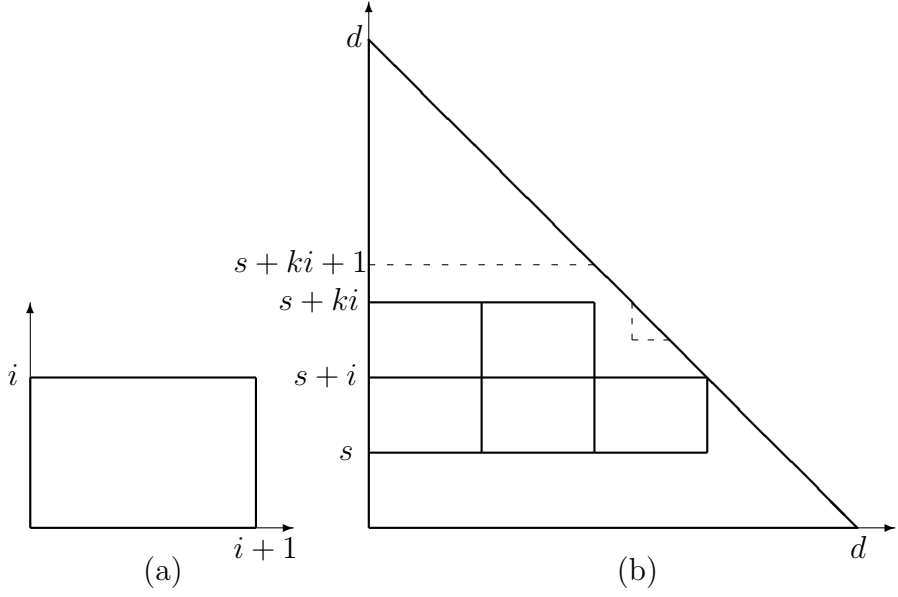

Then we subdivide the lattice triangle $T_d=\conv\{(0,0),(d,0),(0,d)\}$ into convex lattice polygons, in which the desired number of patches corresponding to any fixed real ordinary singularity type should be arranged as shown in Figure \ref{fig3}(b). The polynomials for each rectangle are obtained from just one suitable polynomial $F_i$ constructed above, which should be multiplied by an appropriate monomial and undergo the coordinate change $x\mapsto x^{-1}$ and/or $y\mapsto y^{-1}$ in order to make any two neighboring polynomials agree along the common side of their Newton rectangles. Between the unions of rectangles corresponding to different types of ordinary singularities, we leave the space of vertical size at most $m-1$ (shown by dashed lines in Figure \ref{fig3}(b)) which should be subdivided into lattice triangles with the associated polynomials defining smooth real curves in $(\CC^*)^2$ so that finally one obtains a convex subdivision of $T_d$. To apply the patchworking theorem \cite[Theorem 3.6]{Sh12} we have to verify the topological transversality conditions. With an appropriate orientation of the adjacency graph of the tiles of the subdivision, we get $\partial_+R_i$ in each rectangle $R_i$ to be the union of the bottom and the left sides. The sufficient transversality condition stated in \cite[Theorem 4.1(1), the first formula]{Sh12} (see also \cite[Proposition 4.5.2 and Corollary 1.2.22]{GLS3}) reads as
$$\text{the contribution of the two}\ i-\text{multiple points}\ =\ 2i$$
$$<2i+1\ =\ \text{the total length of the upper and the right sides of}\ R_i.$$

Finally, we note that the resulting curve admits a deformation smoothing out any prescribed singularities and keeping the remaining ones. Thus, one can realize any values in the right-hand side of (\ref{eq:ordinary2}) with $\phi^{ordinary}_{m,fix}(d)\sim m^2d$. \end{proof}

\medskip\noindent
{\bf Curves with semi-quasihomogeneous singularities.}
Consider curve singularities topologically equivalent to $x^m+y^n=0$, $2\le m\le n$. A slightly modified Zariski construction mentioned in Section \ref{ssec:2.2} provides a series of curves with a large number of semi-quasihomogeneous singularities. Combining Zariski's with the patchworking construction we obtain even a series of  curves belonging to a $T$-smooth family.

\begin{theorem}\label{t-sqh}\index{existence!of q.h. singularities}
Let $2\le m\le n$.

(1) If $\gcd(m,n)=1$, then for any $r\ge1$, the curve $C$ of degree $d=rmn$ given by $F^n+G^m=0$, where $F,G\in\CC[x,y,z]$ are generic homogeneous polynomials of degree $rm$ and $rn$, respectively, is irreducible and has $r^2mn=\frac{d^2}{mn}$ singular points of type $x^m+y^n=0$ as its only singularities.

(2) If $\gcd(m,n)>1$, then for any $r\ge1$, the curve $C$ of degree $d=rmn+1$ given by $L_1F^n+L_2G^m=0$, where $F,G\in \CC[x,y,z]$ are generic homogeneous polynomials of degree $rm$ and $rn$, respectively and $L_1,L_2$ are generic linear polynomials, is irreducible and has $r^2mn=\frac{(d-1)^2}{mn}$ singular points of type $x^m+y^n=0$ as its only singularities.

Moreover, in both cases we can assume that the constructed curves are real and all their singular points are real.
\end{theorem}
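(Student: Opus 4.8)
The plan is to treat the two cases in parallel, isolating the single place where $\gcd(m,n)$ matters. Throughout I work with generic $F,G$ (and, in case (2), generic $L_1,L_2$), first pinning down the singular locus and its types, then establishing irreducibility, and finally the reality statement. For generic $F,G$ the curves $\{F=0\}$ and $\{G=0\}$ are smooth and meet transversally, so by B\'ezout they meet in exactly $\deg F\cdot\deg G=rm\cdot rn=r^2mn$ distinct points, which I call the base points and which I may assume to lie in the affine chart. At a base point $p$ the differentials $dF,dG$ are independent, so $(u,v):=(F,G)$ is a local analytic coordinate system at $p$. In case (1) the defining equation becomes literally $u^n+v^m$, so the germ $(C,p)$ is exactly of type $x^m+y^n$; in case (2) it becomes $L_1u^n+L_2v^m$ with $L_1(p),L_2(p)\ne0$, a semi-quasihomogeneous perturbation of $c_1u^n+c_2v^m$ by terms of higher weighted degree (weights $w(u)=1/n$, $w(v)=1/m$), hence again analytically of type $x^m+y^n$. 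Since $m,n\ge2$, all partials of the defining form vanish at each base point, so every base point is genuinely singular of the asserted type.

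Next I rule out further singularities. In case (2) the curves $C=\{L_1F^n+L_2G^m=0\}$ are precisely the members of the linear system $\mathcal L$ spanned by $\{x_iF^n,\,x_iG^m\}_{i=0,1,2}$, whose base locus is exactly $\{F=0\}\cap\{G=0\}$, the $r^2mn$ base points; Bertini's theorem then gives that the generic member is smooth away from this base locus. In case (1) I use instead the pencil $sF^n+tG^m$, which has the same base locus and, by Bertini, only finitely many fibres singular away from it; after a generic rescaling of $F$ the fibre $[1:1]$ is not among these, so for generic $F,G$ the curve $C$ is smooth off the base points. Thus in both cases the only singularities of $C$ are the $r^2mn$ base points, and the counts $r^2mn=d^2/(mn)$ resp. $(d-1)^2/(mn)$ follow at once from $d=rmn$ resp. $d=rmn+1$.

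The two cases genuinely diverge only in the irreducibility argument. When $\gcd(m,n)=1$ the singularity $x^m+y^n$ is unibranch; if $C$ were reducible, any two components would meet (B\'ezout), and every intersection point would be a singular point of $C$ carrying at least two branches, contradicting unibranchness, so $C$ is irreducible. When $\gcd(m,n)=\delta>1$ this fails, and indeed $F^n+G^m=\prod_{\zeta^\delta=-1}(F^{n/\delta}-\zeta\,G^{m/\delta})$ is reducible --- which is exactly why the linear forms are introduced. For the system $\mathcal L$ above, the associated rational map sends $p\mapsto\bigl(p,[F^n(p):G^m(p)]\bigr)$, i.e. it is the graph of $[F^n:G^m]\colon\PP^2\dashrightarrow\PP^1$ composed with the Segre embedding; its image is a surface, so $\mathcal L$ is not composed with a pencil, and Bertini's irreducibility theorem gives that the generic member $C$ is irreducible.

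All the genericity conditions above (smoothness and transversality of $\{F=0\}$ and $\{G=0\}$, absence of extra singularities, irreducibility) are Zariski-open and defined over $\R$. Choosing real $F,G$ (and real $L_1,L_2$) so that $\{F=0\}$ and $\{G=0\}$ meet in $r^2mn$ real transversal points --- a nonempty Euclidean-open condition on the real data, realizable for instance by perturbing a conjugation-invariant configuration of real curves --- and then intersecting this Euclidean-open set with the dense good Zariski-open locus, one obtains a real curve of the prescribed degree all of whose $r^2mn$ singular points are real. I expect the main obstacle to be the irreducibility in case (2): one must recognize that the $\Z/\delta$-symmetric factorization responsible for the reducibility of $F^n+G^m$ is destroyed by a generic linear twist, and the cleanest justification is the computation that $\mathcal L$ is not composed with a pencil. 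The two remaining technical points are verifying the precise analytic type in case (2) (the semi-quasihomogeneous equivalence absorbing the unit factors and higher-weight terms) and arranging all base points to be simultaneously real.
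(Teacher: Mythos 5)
Your proposal is correct and rests on the same construction as the paper: the transversal intersection points of $\{F=0\}$ and $\{G=0\}$ become the $r^2mn$ singular points, with $(F,G)$ serving as local coordinates so that the local equation is (a unit-twisted, higher-weight perturbation of) $u^n+v^m$. The paper's own proof is a single sentence recording exactly this observation and nothing more; everything else you wrote --- the Bertini argument excluding singularities off the base locus, the irreducibility via unibranchness of $x^m+y^n$ when $\gcd(m,n)=1$, the observation that the system spanned by $x_iF^n,x_iG^m$ is not composed with a pencil (so Bertini's irreducibility theorem applies in case (2)), and the density argument for realizing all base points as real points --- is left implicit there, and your versions of these verifications are sound. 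One small correction: in case (2) you claim the germ $L_1u^n+L_2v^m+\cdots$ is \emph{analytically} of type $x^m+y^n$; a semi-quasihomogeneous perturbation with nondegenerate principal part is in general only \emph{topologically} equivalent to $x^m+y^n$ (the analytic type can have moduli for larger $m,n$), but topological equivalence is precisely what the theorem asserts in this section, so the argument is unaffected.
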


\begin{proof}
By construction, the curves $F=0$ and $G=0$ intersect transversally at $r^2mn$ distinct points, and the curve $C$ has the topological singularity $x^m+y^n=0$ at each of these intersection points.
\end{proof}

Note that the number of singularities obtained on these  ``Zariski curves'' is close to the genus bound (\ref{genbound}): for instance, under the conditions of Theorem \ref{t-sqh}, the number of the considered singular points does not exceed
$$\frac{(d-1)(d-2)}{(m-1)(n-1)+\gcd(m,n)-1}<\frac{(d-1)^2}{(m-1)(n-1)},$$ which is comparable with the actual numbers $\frac{d^2}{mn}$ and $\frac{(d-1)^2}{mn}$ of singularities.
On the other hand, we cannot guarantee that the conditions imposed on the curve by singular points are independent; hence, we cannot ensure that there exist curves of the given degree with any intermediate amount of singular points of the given type.
\medskip

One can modify Zariski's construction further and obtain a curve with different semi-quasihomogeneous singularities, and, moreover, obtain a curve belonging to a $T$-smooth equisingular stratum:

\begin{theorem}\label{t-squ1} \index{existence!of q.h. singularities}
(1) Given integers $$d_0,d'_0\ge0,\ d_1,...,d_k,d'_1,...,d'_l\ge1, \ m_1,...,m_k,n_1,...,n_l\ge2$$ such that
$$d=\sum_{i=1}^km_id_i+d_0=\sum_{j=1}^ln_jd'_j+d'_0.$$
Then   the plane curve of degree $d$
\begin{equation}H\prod_{i=1}^kF_i^{m_i}+H'\prod_{j=1}^lG_J^{n_j}=0,\label{eq:curve}\end{equation} where $F_1,...,F_k,G_1,...,G_l,H,H'$ are generic homogeneous polynomials of degree $d_1,...,d_k,d'_1,...,d'_l,d_0,d'_0$, respectively, has $d_id'_j$ singular points of topological type $x^{m_i}+y^{n_j}=0$ for all $i=1,...,k$, $j=1,...,l$. Further on,
we can achieve that the constructed curve is real and all its singular points are real.

(2) In addition, if  either
\begin{enumerate}\item[(i)]
$$\begin{cases}&\Big[d_0\ne d'_0\quad\text{or}\quad\gcd(m_1,...,m_k,n_1,...,n_l)=1\Big]\\
&\text{and}\quad \sum_{i=1}^k\sum_{j=1}^ld_id'_j(m_i+n_j-\gcd(m_i,n_j)-\eps(m_i,n_j))<\widetilde d(d'+3),\end{cases}$$ where $d'=\min\{d_0,d'_0\}$, $\widetilde d=d-d'$,
$$\eps(a,b)=\begin{cases}1,\ &a\equiv 1\mod b,\ \text{or}\ b\equiv 1\mod a,\\ 0,\ &\text{otherwise},\end{cases}$$
or
\item[(ii)]
\begin{equation}\sum_{i=1}^k\sum_{j=1}^ld_id'_j(m_i+n_j-\gcd(m_i,n_j)-\eps(m_i,n_j))<\widetilde d(d'+2),
\label{eq:sqhtsmooth}\end{equation}
\end{enumerate}
then the curve constructed above belongs to a $T$-smooth equisingular family, and it admits a deformation moving all its singular points to a general position.
\medskip

\end{theorem}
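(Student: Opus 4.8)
I would split the argument exactly as the statement is split.

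\medskip
\noindent\emph{Part (1): the Zariski analysis.}
First I would record the degrees: $F_i^{m_i}$ has degree $m_id_i$ and $G_j^{n_j}$ has degree $n_jd'_j$, so by the two hypotheses on $d$ both summands $H\prod_iF_i^{m_i}$ and $H'\prod_jG_j^{n_j}$ have degree $d$, and $C$ is a degree-$d$ curve. Fixing generic $F_i,G_j$ and letting $H,H'$ vary, I would work inside the \emph{linear} system $\{H\Phi+H'\Psi=0\}$ with $\Phi:=\prod_iF_i^{m_i}$, $\Psi:=\prod_jG_j^{n_j}$; its base locus is $V(\Phi)\cap V(\Psi)=\bigcup_{i,j}\bigl(V(F_i)\cap V(G_j)\bigr)$, which by B\'ezout consists of $d_id'_j$ points for each pair $(i,j)$, all transversal intersections for generic choices. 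At such a point $p$, taking $F_i,G_j$ as local coordinates, $\Phi=F_i^{m_i}\cdot(\text{unit})$ and $\Psi=G_j^{n_j}\cdot(\text{unit})$ while $H,H'$ and the remaining factors are units, so $C$ has local equation $u\,x^{m_i}+v\,y^{n_j}=0$ and hence topological type $x^{m_i}+y^{n_j}=0$. Bertini's theorems then give that the generic member is smooth off the base locus and irreducible (the system is not composite with a pencil), so these are the only singularities; the real version follows by choosing real $F_i,G_j$ with real mutual intersection points (small real perturbations of split configurations). This is the same mechanism as in the proof of Theorem \ref{t-sqh}, only carried out for several exponents at once.

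\medskip
\noindent\emph{Part (2): reduction to a cohomology vanishing.}
By the $T$-smoothness criterion recalled before (\ref{regex}) (see \cite[Theorem 3.6]{GL} and \cite[Theorem 2.2.40]{GLS3}), the family $V_d^{irr}$ is $T$-smooth at $C$ as soon as $H^1(\PP^2,\mathcal{J}_{X^{es}(C)}(d))=0$, where $X^{es}(C)=\sum_{p\in\Sing(C)}X^{es}(C,p)$ is the zero-dimensional equisingularity scheme. The decisive structural input I would exploit is that every singular point lies on the \emph{fixed} reference curves $F_i=0$ and $G_j=0$, whose branch-directions cut out the two transversal branches of $x^{m_i}+y^{n_j}$. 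Thus part of the equisingular conditions is pre-absorbed by the construction, and the relevant local contribution to the obstruction is not the full $\tau^{es}$ but the reduced quantity $\gamma(m_i,n_j):=m_i+n_j-\gcd(m_i,n_j)-\eps(m_i,n_j)$ that appears on the left of (\ref{eq:sqhtsmooth}).

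\medskip
\noindent\emph{The vanishing — the main obstacle.}
Establishing $H^1(\PP^2,\mathcal{J}_{X^{es}(C)}(d))=0$ is the heart of the matter, and this is where (i) and (ii) enter. Writing $d'=\min\{d_0,d'_0\}$ and $\widetilde d=d-d'$, I would assume without loss that $\deg H=d'$, so that the summand $H\Phi$ factors through a degree-$d'$ curve carrying the singular points and a cofactor $\Phi$ of degree $\widetilde d$; restricting the twisted ideal sheaf to these reference curves and counting residually produces the budget $\widetilde d\,(d'+c)$ on the right. The plan is to prove the vanishing either by a direct Cayley--Bacharach/residuation argument on $\PP^2$ relative to $V(F_i)$ and $V(G_j)$, or---matching the stated idea of \emph{combining Zariski's construction with patchworking}---by degenerating $C$ along the Newton subdivision attached to the factorisation and invoking the $\mathcal S$-transversality of each patchworking pattern, whence $T$-smoothness follows directly from Theorem \ref{t-patchworking}. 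In both incarnations the condition takes the form $\sum_{i,j}d_id'_j\,\gamma(m_i,n_j)<\widetilde d\,(d'+c)$, and the two cases differ only by whether one extra global section is available: when $d_0\neq d'_0$ or $\gcd(m_1,\dots,n_l)=1$ the curve is forced to be genuinely irreducible and one gains a unit, yielding the weaker bound $c=3$ of (i); without that input one is limited to $c=2$ as in (ii). The hard part will be precisely this bookkeeping: bounding each local obstruction by $\gamma(m_i,n_j)$, assembling the local estimates into the single global inequality, and tracking the $+1$ improvement, whereas Part (1) is routine intersection theory.

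\medskip
\noindent\emph{Deformation to general position.}
Once $T$-smoothness is known, the equisingular family is smooth of the expected dimension (\ref{expdim}), and the natural map recording the positions of the singular points is a submersion at $C$; as in Remark \ref{r-tsmoothness} this yields an equisingular deformation moving all singular points into general position, completing the proof.
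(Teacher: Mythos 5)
Your Part (1) is sound and is essentially what the paper leaves implicit: transversality of $\{F_i=0\}$ and $\{G_j=0\}$ at the $d_id'_j$ base points, the local normal form $u\,x^{m_i}+v\,y^{n_j}=0$, Bertini off the base locus, and a real choice of the data. The paper's own proof explicitly addresses only Part (2), and that is where your proposal has a genuine gap.

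For Part (2) you correctly reduce $T$-smoothness to an $H^1$-vanishing statement, but you do not establish it. The decisive quantitative step --- that the local obstruction at a point of type $x^{m_i}+y^{n_j}$ can be cut down from $\tau^{es}$ (which grows like $m_in_j$) to $m_i+n_j-\gcd(m_i,n_j)-\eps(m_i,n_j)$ because the tangent directions are pinned to the fixed curves $F_i=0$, $G_j=0$, and that the resulting local bounds assemble into the single global inequality (\ref{eq:sqhtsmooth}) --- is asserted rather than proved, and you yourself flag it as ``the hard part''. That step is the entire content of Part (2). The paper closes it by a different and more concrete route: it specializes to $d_0=d'_0=d'>0$, degenerates the curve to $\widehat H\bigl(L\prod_iF_i^{m_i}+L'\prod_jG_j^{n_j}\bigr)=0$ with $L,L'$ linear and $\deg\widehat H=d'-1$, and invokes the criterion of \cite{Sh4} (Theorem on p.\ 31 together with Inequality (4) there), whose invariant $b$ from \cite[Definition 2]{Sh4} specializes exactly to $m_i+n_j-\gcd(m_i,n_j)-\eps(m_i,n_j)$, so that its hypothesis is literally (\ref{eq:sqhtsmooth}); openness of $T$-smoothness and of the mobility of the singular points then transfers the conclusion back to the generic curve (\ref{eq:curve}). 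Two further cautions: your patchworking alternative is circular as stated, since the $\mathcal S$-transversality of the would-be patterns is precisely the sort of local $T$-smoothness statement that (\ref{eq:sqhtsmooth}) is meant to guarantee, and the Zariski curve carries no natural convex Newton subdivision isolating the singular points in separate tiles; and your explanation of the $d'+3$ versus $d'+2$ dichotomy (``one gains a unit from irreducibility'') is a plausible heuristic but is not substantiated --- in the paper both cases are handled by the same citation.
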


\begin{proof}
We have to explain only part (2). Consider the case (2ii). We can assume that $d_0=d'_0=d'>0$. We shall prove that the curve given by
\begin{equation}\widehat H\left(L\prod_{i=1}^kF_i^{m_i}+L'\prod_{j=1}^lG_J^{n_j}\right)=0,\label{eq:curve1}\end{equation} where $F_1,...,F_k,G_1,...,G_l,L,L',\widehat H$ are generic homogeneous polynomials of degree $d_1,...,d_k,d'_1,...,d'_l,1,1,d'-1$, respectively, belongs to a $T$-smooth equisingular family with respect to the singular points located at the set $\bigcup_{i,j}\{F_i=0\}\cap\{G_j=0\}$, and all these singular points can be moved to a general position (while the nodes located on $H$ may disappear). Since these properties are open, they will hold for the original curve (\ref{eq:curve}) as well. On the other hand, by \cite[Theorem in page 31]{Sh4} the required properties will follow from \cite[Inequality (4)]{Sh4} which in our situation takes the form of (\ref{eq:sqhtsmooth}) (cf. the definition of the invariant $b$ in \cite[Definition 2]{Sh4}). In the same manner, one can settle the case (2i).
\end{proof}

\begin{remark}{\em (1) Under the hypotheses of part (2) of Theorem \ref{t-squ1}, one can deform the curve (\ref{eq:curve}) smoothing out prescribed singularities and keeping the other ones.

(2) The hypotheses of Theorem \ref{t-squ1}(2) can be relaxed to the following one:
$$d'\ge\widetilde d\cdot\max_{i,j}\left\{\frac{1}{m_i}+\frac{1}{n_j}\right\}-2\ .$$}
\end{remark}

\subsection{Curves with arbitrary singularities}\label{ssec:3.3}

%
%
%

Note that none of the constructions discussed in Sections \ref{sec:2} and \ref{ssec:3.2} can be applied directly, if we ask about the existence of plane curves of a given degree with a prescribed collection of topological or analytic singularities, which are not further specified. For instance, the patchworking construction requires to find a patchworking pattern of possibly minimal size which, for an arbitrary singularity type, is not clear at all.

However, there is another approach that combines some features of  the patchworking construction\index{construction!patchworking}\index{patchworking!construction} with suitable $H^1$-vanishing criteria for the ideal sheaves of zero-dimensional subschemes of the plane placed in general position. The first attempt like that, undertaken in \cite{GLS1}, has led to the following existence criterion: for any positive integer $d$ and topological singularity types $S_1,...,S_r$, the inequality
\begin{equation}\sum_{i=1}^r\mu(S_i)\le\frac{d^2}{392}\label{eq:top}\end{equation} is sufficient for the existence of an irreducible plane curve of degree $d$ having $r$ singular points of types $S_1,...,S_r$, respectively, as its only singularities. This criterion already possessed the two important properties: it was \emph{universal}, i.e., uniformly applicable to arbitrary topological singularities, and \emph{asymptotically proper} i.e. comparable with the necessary condition (\ref{mubound}). On the other hand, the analytic singularity types were left aside, and the coefficient of $d^2$ in (\ref{eq:top}) was too small. An improved method was suggested in \cite{Sh13} (for a further improvement and a detailed exposition see \cite[Section 4.5.5]{GLS3}). The main result was (cf. \cite[Theorem 3 and Remark 5]{Sh13} and
\cite[Corollary 4.5.15]{GLS3}):

\begin{theorem}\label{t-analytic}\index{existence!of arbitrary singularities}
For any positive integer $d$ and an arbitrary sequence of complex, resp. real, analytic singularity types $S_1,...,S_r$, the inequality
\begin{equation}\sum_{i=1}^r\mu(S_i)\le\frac{1}{9}(d^2-2d+3)\label{eq:analytic}
\end{equation}
is sufficient for the existence of a plane irreducible complex, resp. real, curve of degree $d$ having $r$ singular points of types $S_1,...,S_r$, respectively, as its only singularities. Moreover, the position of the singular points together with the tangent directions for unibranch singularities can be chosen generically.
\end{theorem}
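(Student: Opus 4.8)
The plan is to reduce the whole construction to a single cohomology vanishing statement on $\PP^2$ and then to establish that vanishing for a zero-dimensional scheme whose support is in general position. For each analytic type $S_i$ I would first fix a polynomial germ $f_i$ realizing $S_i$ (chosen real when we work over $\R$) and attach to it a zero-dimensional \emph{existence scheme} $X_i$ concentrated at a point $p_i$, designed so that a degree-$d$ curve containing $X_i$, with a generic choice of the remaining coefficients, automatically carries a singular point of type exactly $S_i$ at $p_i$; for a unibranch type the scheme $X_i$ also records the tangent direction, which is why that direction may be prescribed generically. The essential local estimate to be proven is that $X_i$ can be chosen with $\deg X_i$ bounded by a fixed multiple of $\mu(S_i)$ --- crucially the \emph{Milnor} number and not the larger Tjurina number enters, because we only aim at non-emptiness and not at $T$-smoothness. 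Writing $X=\bigsqcup_{i=1}^r X_i$ with $p_1,\dots,p_r$ in general position, I would then invoke the cohomological existence criterion of \cite{GK,GL,GLS3}: if $H^1(\PP^2,\mathcal{J}_{X/\PP^2}(d))=0$, then $H^0(\PP^2,\mathcal{J}_{X}(d))$ is large enough that a generic member defines an irreducible curve of degree $d$ whose only singularities are of types $S_1,\dots,S_r$, located at $p_1,\dots,p_r$.

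With this reduction, the theorem comes down to proving $H^1(\PP^2,\mathcal{J}_X(d))=0$ under the hypothesis $\sum_i\mu(S_i)\le\tfrac19(d^2-2d+3)$. Combining it with the local bound $\deg X_i\le c\,\mu(S_i)$ gives a bound on $\deg X=\sum_i\deg X_i$ that is a fixed fraction of $d^2-2d+3$, and the target becomes a general-position vanishing criterion of the shape: for a zero-dimensional scheme supported at generic points whose total degree lies below an explicit quadratic threshold in $d$, one has $H^1(\mathcal{J}_X(d))=0$. The product of the local constant $c$ with the reciprocal of the threshold coefficient is exactly what produces the coefficient $\tfrac19$. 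This is the point where the ``patchworking features'' enter: I would prove the vanishing by the Horace method, degenerating the generic support to a special configuration on auxiliary lines (conjugation-invariant ones over $\R$) and running an induction on $d$ through the restriction sequence
\begin{equation}
0\to\mathcal{J}_{X:L}(d-1)\to\mathcal{J}_X(d)\to\mathcal{J}_{X\cap L/L}(d)\to 0
\end{equation}
for a suitably chosen line $L$, distributing the local schemes between the residual scheme $X:L$ and the trace $X\cap L$ on $L\cong\PP^1$ so that both outer $H^1$-terms vanish by the inductive hypothesis. The numerical bookkeeping in this induction is what pins down the explicit quadratic threshold, and hence the constant $\tfrac19$, while staying asymptotically proper relative to the Bézout bound (\ref{mubound}).

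For the real statement I would carry out the same argument equivariantly: take the germs $f_i$ real, the points $p_i$ in conjugation-invariant general position (imaginary types occurring in complex-conjugate pairs), and note that $\mathcal{J}_X$ then carries a real structure, so that the vanishing of the complexified $H^1$ furnishes a real section, hence a real curve with the prescribed real types. Irreducibility and the fact that a generic member has \emph{exactly} (not merely at worst) the prescribed singularities would follow from a Bertini argument applied to the base-point-free part of the linear system together with the largeness guaranteed by the same vanishing; over $\R$ one uses that irreducibility is open and that the real points of the system are Zariski dense.

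The hardest step is unquestionably the general-position $H^1$-vanishing with a constant sharp enough to yield $\tfrac19$: arbitrary singularity schemes are far less rigid than the fat points controlled by the Alexander--Hirschowitz theorem \cite{AH1}, so steering the trace $X\cap L$ and the residue $X:L$ simultaneously through the Horace induction --- while keeping the degree accounting sharp enough that the threshold remains quadratic in $d$ with the right coefficient --- is where essentially all the work lies, and it is the content of \cite{Sh13} and \cite[Section~4.5.5]{GLS3}. The secondary difficulty is the purely local estimate $\deg X_i\le c\,\mu(S_i)$, i.e.\ engineering an existence scheme that forces the exact analytic type (including the tangent line in the unibranch case) while remaining as small as the Milnor number permits.
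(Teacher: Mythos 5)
Your overall architecture --- attach to each $S_i$ a zero-dimensional scheme $X_i$ of degree $O(\mu(S_i))$ whose containment in a degree-$d$ curve forces the prescribed singularity, and reduce everything to $H^1(\PP^2,\mathcal{J}_{X}(d))=0$ --- is exactly the paper's first step (the schemes $Z^s_{st}$ and $\widetilde Z^a_{st}$ of Lemma \ref{l-arb1}). But there is a genuine gap in how you propose to obtain the vanishing. You keep each local scheme $X_i$ \emph{fixed} and genericize only the positions $p_1,\dots,p_r$, then run a Horace induction on lines. This cannot produce the threshold $\frac19(d^2-2d+3)$: already for $r=1$ the obstruction is local, not positional. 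For an arbitrary representative of an analytic type the associated scheme $Z$ can be ``thin'' (close to curvilinear), and then $H^1(\mathcal{J}_{Z}(d))\neq 0$ for all $d$ up to order $\deg Z\sim\mu$, no matter where the supporting point sits and no matter how you distribute trace and residue; the paper flags this explicitly just before Lemma \ref{l-arb2}. Since the theorem (equivalently Theorem \ref{t-onesingularity}) needs $d$ of order $\sqrt{\mu}$, general position of the support plus Horace bookkeeping is not enough, and no choice of the constant $c$ in your local estimate $\deg X_i\le c\,\mu(S_i)$ can repair this.

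The missing idea is to genericize the scheme itself, not just its support: one replaces $Z$ by a generic element $Z'$ of its orbit $\Iso(Z)$ under the action of $\Aut(\mathcal{O}_{\PP^2,p})$, i.e.\ one chooses a generic local representative of the analytic type. Lemma \ref{l-arb2} then gives $H^1(\mathcal{J}_{Z'}(d))=0$ already for $d\le \deg Z\big/\sqrt{\tfrac43 M_2(Z)}+\sqrt{\tfrac43 M_2(Z)}-2$, where $M_2(Z)$ is the intersection multiplicity of two generic elements of the ideal $I(Z)$; combined with the linear bounds for $\deg Z$ and the lower bounds for $M_2$ in terms of $\mu$ and $\delta$, this yields the $\sqrt{\mu}$-order degree and hence the coefficient $\tfrac19$. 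Two smaller remarks: your aside that the Tjurina number is ``larger'' than the Milnor number is backwards ($\tau\le\mu$ always), and the paper's Lemma \ref{l-arb1} in fact also delivers $T$-smoothness of $V^{irr}_d(S)$ at the constructed curve, so the reduction costs nothing even though the theorem only asserts non-emptiness.
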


We point out that condition (\ref{eq:analytic}) has a much larger coefficient of $d^2$ in the right-hand side than (\ref{eq:top}) and covers both arbitrary topological and analytic singularity types, being universal with respect to the choice of singularities.

We also remark that condition (\ref{eq:analytic}) is a weaker form of the following stronger sufficient existence conditions (see \cite[Theorem 4.5.14]{GLS3}):
$$6n+10k+\frac{49}{6}t+\frac{625}{48}\sum_{S_i\ne A_1,A_2}\delta(S_i)\le d^2-2d+3,$$
for topological singularity types $S_1,...,S_r$, and
$$6n+10k+\sum_{S_i\ne A_1,A_2}\frac{7\mu(S_i)+2\delta(S_i))^2}{6\mu(S_i)+3\delta(S_i)}\le d^2-2d+3,$$
for  analytic singularity types $S_1,...,S_r$. In these inequalities, $n$ is the number of nodes $A_1$, $k$ is the number of cusps $A_2$, and $t$ is the number of singularities $A_{2m}$, $m\ge2$, occurring in the list $S_1,...,S_r$.
Notice that the coefficients in front of $n$ and $k$ in the above two formulas are the best possible, since a node at a prescribed point imposes $3$ conditions, while a cusp at a fixed point with a fixed tangent direction imposes $5$ conditions.

The case of just one singularity is of special interest. The corresponding result sounds as follows (\cite[Theorem 2 and Remark 5]{Sh13} and \cite[Theorem 4.5.19]{GLS3}):

\begin{theorem}\label{t-onesingularity}\index{existence!of arbitrary singularities}
For an arbitrary analytic singularity type $S$, there exists a plane curve of degree
\begin{equation}d\le3\sqrt{\mu(S)}-1\label{eq:onesingularity}\end{equation} having a singular point of type $S$ as its only singularity.
\end{theorem}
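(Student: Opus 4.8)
The plan is to deduce this as a direct corollary of the general existence criterion in Theorem \ref{t-analytic}, specialized to the case of a single analytic singularity type $S$ (so $r=1$). First I would invoke Theorem \ref{t-analytic}: it guarantees that if $\mu(S)\le\frac{1}{9}(d^2-2d+3)$, then there exists a plane irreducible curve of degree $d$ carrying a single singular point of type $S$ as its only singularity. Thus the whole problem reduces to a numerical one: given $\mu=\mu(S)$, find a degree $d$ satisfying $9\mu\le d^2-2d+3=(d-1)^2+2$, while controlling $d$ from above by $3\sqrt{\mu}-1$.

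The key algebraic step is to rewrite the sufficient inequality in a completed-square form. Since $d^2-2d+3=(d-1)^2+2$, the condition $9\mu(S)\le (d-1)^2+2$ holds as soon as $(d-1)^2\ge 9\mu(S)-2$, that is, as soon as $d-1\ge\sqrt{9\mu(S)-2}$. I would then set $d$ to be the least integer with $d\ge 1+\sqrt{9\mu(S)-2}$ and verify that this choice is compatible with the claimed bound $d\le 3\sqrt{\mu(S)}-1$. The point is that $\sqrt{9\mu-2}=3\sqrt{\mu}\cdot\sqrt{1-\tfrac{2}{9\mu}}<3\sqrt{\mu}$, so $1+\sqrt{9\mu-2}<3\sqrt{\mu}+1$, and the slack of roughly $2$ between $3\sqrt{\mu}+1$ and $3\sqrt{\mu}-1$ is exactly what must absorb the rounding to an integer degree.

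The main obstacle will be precisely this rounding/interval argument: I must show that the half-open interval $[\,1+\sqrt{9\mu(S)-2},\,3\sqrt{\mu(S)}-1\,]$ contains at least one integer for every admissible value of $\mu(S)$ (note $\mu(S)\ge 1$, and in fact $\mu(S)\ge 2$ for any genuine singularity, since $\mu=1$ forces a node). The length of this interval is $3\sqrt{\mu}-2-\sqrt{9\mu-2}$, which I would bound below and check stays above $1$ for all relevant $\mu$; a short computation gives $3\sqrt{\mu}-\sqrt{9\mu-2}=\frac{2}{3\sqrt{\mu}+\sqrt{9\mu-2}}$, a small positive quantity, so the interval length is close to $-2+(\text{tiny})$ and the naive estimate is in fact \emph{negative}. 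This signals that the honest bound extracted from Theorem \ref{t-analytic} is $d\le 1+\lceil\sqrt{9\mu(S)-2}\,\rceil$, and the stated clean form $d\le 3\sqrt{\mu(S)}-1$ must either incorporate a refinement of Theorem \ref{t-analytic} for a single singularity (for instance a sharper coefficient than $\frac{1}{9}$, or the stronger branch-counting inequalities quoted after Theorem \ref{t-analytic}) or be understood asymptotically. Consequently I would either appeal to the sharpened single-singularity version of the construction in \cite{Sh13,GLS3}, which trades the $+2$ constant and the rounding loss for the tighter exponent, or else interpret the inequality \eqref{eq:onesingularity} up to the additive rounding term; reconciling the displayed constant with the criterion \eqref{eq:analytic} is the delicate point that the full proof in \cite{Sh13} handles by a direct patchworking-plus-cohomology construction rather than by this elementary reduction.
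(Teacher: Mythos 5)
Your own arithmetic already exposes the problem: specializing Theorem \ref{t-analytic} to $r=1$ requires $d\ge 1+\sqrt{9\mu(S)-2}$, and you correctly verify that no integer can simultaneously satisfy this and the target bound $d\le 3\sqrt{\mu(S)}-1$ (one would need $\sqrt{9\mu-2}\le 3\sqrt{\mu}-2$, i.e.\ $\mu\le\frac14$). So the reduction you propose cannot yield the theorem for any $\mu(S)\ge1$; it only gives $d\le 3\sqrt{\mu(S)}+1$ up to rounding. Having diagnosed this, you then defer the actual argument to the references, which means the proposal as written does not contain a proof of the stated inequality --- this is a genuine gap, not merely a loss of constants that can be absorbed.

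The route the paper indicates (following \cite{Sh13} and \cite[Theorem 4.5.19]{GLS3}) is not a numerical specialization of the criterion \eqref{eq:analytic} but a direct application of the two lemmas stated afterwards. By Lemma \ref{l-arb1} it suffices to find $d$ with $H^1(\PP^2,{\mathcal J}_{Z/\PP^2}(d))=0$ for the single zero-dimensional scheme $Z=\widetilde Z^a_{st}(C,p)$ attached to the singularity. Lemma \ref{l-arb2} then supplies such a $d$ after replacing $Z$ by a generic element $Z'$ of its orbit $\Iso(Z)$ under $\Aut({\mathcal O}_{\PP^2,p})$, with the bound
$$d\le\frac{\deg Z}{\sqrt{\tfrac{4}{3}M_2(Z)}}+\sqrt{\tfrac{4}{3}M_2(Z)}-2\,.$$
Combined with the estimate of $\deg Z$ by a linear function of $\mu(C,p)$ and $\delta(C,p)$ and the control of $M_2(Z)$, this gives $d\le 3\sqrt{\mu(S)}-1$ directly for one singularity, avoiding the losses incurred by first packaging everything into the uniform coefficient $\tfrac19$ of Theorem \ref{t-analytic} and then unpacking. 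If you want to salvage your corollary-style argument, you would have to start from Lemma \ref{l-arb2} (or from the sharper displayed inequalities quoted after Theorem \ref{t-analytic}) rather than from \eqref{eq:analytic} itself.
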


\begin{remark}\label{r-arb1}{\em
A necessary condition for $d$ as in Theorem \ref{t-onesingularity} comes, for instance, from (\ref{mubound}): $d\ge\sqrt{\mu(S)}+1$. Thus, the sufficient condition (\ref{eq:onesingularity}) is of the same order $\sqrt{\mu(S)}$ as the necessary one.}
\end{remark}

We comment on the main ideas behind Theorems \ref{t-analytic} and \ref{t-onesingularity}:

The first important ingredient is to reduce the existence problem to an $H^1$-vanishing condition for the ideal sheaf of a suitable zero-dimensional subscheme of the plane. Namely, to each reduced plane curve germ $(C,p)$ we associate two zero-dimensional schemes $Z^{s}_{st}(C,p)$ and $\widetilde Z^{a}_{st}(C,p)$ in $\PP^2$ supported at $p$ that are defined  as follows (cf. \cite[Section 1.1.4 and 4.5.5.1]{GLS3}):
\begin{itemize}\item Take the complete resolution tree ${\mathcal T}^\infty(C,p)$, choose the subtree ${\mathcal T}^*(C,p)$ containing all infinitely near points which are not the nodes of the union of the strict transform of $(C,p)$ with the exceptional locus, and then define $Z^s_{st}(C,z)$ by the ideal $I^s_{st}\subset{\mathcal O}_{\PP^2,p}$ generated by the elements $\varphi\in{\mathcal O}_{\PP^2,p}$ having the multiplicity $\mt(C,p)+1$ at $p$, and the multiplicity of the strict transform of $(C,p)$ at each infinitely near point $q\in{\mathcal T}^*(C,p)\setminus\{p\}$.
\item Let $(C,p)$ be given by $f(x,y)=0$ with $f\in{\mathcal O}_{\PP^2,p}$ square-free, $x,y$ affine coordinates in a neighborhood of $p$ such that $p=(0,0)$. Define $\widetilde Z^a_{st}(C,z)$ by the ideal
${\mathfrak m}_p\widetilde I^a\subset{\mathcal O}_{\PP^2,p}$, where
$$\widetilde I^a=\{g\in{\mathcal O}_{\PP^2,p}\ :\ g,g_x,g_y\in\langle f,f_x,f_y\rangle\}.$$
\end{itemize}
The importance of these schemes comes from the following claim (cf. \cite[Proposition 4.5.12]{GLS3}):

\begin{lemma}\label{l-arb1}
Let $Z$ denote $Z^s_{st}(C,p)$, resp. $\widetilde Z^a_{st}(C,p)$. If a positive integer $d$ satisfies
$$H^1(\PP^2,{\mathcal J}_{Z/\PP^2}(d))=0,$$
where ${\mathcal J}_{Z/\PP^2}$ is the ideal sheaf of the subscheme $Z\subset\PP^2$, then there exists a curve $C'\subset\PP^2$ of degree $d$ such that the germ $(C',p)$ is topologically, resp. analytic equivalent to $(C,p)$.
Furthermore, the corresponding equisingular family $V^{irr}_d(S)$ ($S$ being the topological, resp. analytic type of $(C,z)$) is T-smooth at $C'$.
\end{lemma}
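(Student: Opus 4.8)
The plan is to convert the stated $H^1$-vanishing into a surjectivity statement and then to exploit the two properties built into the scheme $Z$: that its ideal is \emph{fine enough} to pin down the singularity type of $(C,p)$, yet \emph{coarse enough} to be contained in the relevant equisingularity (resp.\ equianalytic) ideal. Concretely, from the structure sequence $0\to{\mathcal J}_{Z/\PP^2}(d)\to\ko_{\PP^2}(d)\to\ko_Z(d)\to0$ the hypothesis $H^1(\PP^2,{\mathcal J}_{Z/\PP^2}(d))=0$ yields surjectivity of the restriction map $H^0(\PP^2,\ko_{\PP^2}(d))\twoheadrightarrow H^0(Z,\ko_Z(d))=\ko_{\PP^2,p}/I_Z$, where $I_Z$ denotes $I^s_{st}$, resp.\ ${\mathfrak m}_p\widetilde I^a$. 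In other words, every local ``$Z$-jet'' at $p$ is realized by a global curve of degree $d$.

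First I would establish existence. Taking the class of the local equation $f$ of $(C,p)$ as the target jet, surjectivity produces a curve $C'\subset\PP^2$ of degree $d$ whose local equation $f'$ at $p$ satisfies $f'\equiv f\pmod{I_Z}$. The point is that $I_Z$ is designed so that congruence modulo $I_Z$ forces equivalence of germs: in the topological case the generators of $I^s_{st}$ prescribe, along the subtree ${\mathcal T}^*(C,p)$, exactly the multiplicity sequence of the resolution of $(C,p)$ (with the extra unit at $p$), so the strict transform of $(C',p)$ meets the exceptional configuration with the same multiplicities and hence $(C',p)$ is topologically equivalent to $(C,p)$; in the analytic case the condition $g,g_x,g_y\in\langle f,f_x,f_y\rangle$ defining $\widetilde I^a$, together with finite determinacy, guarantees that $f'=f+g$ is contact-equivalent to $f$. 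A Bertini argument then shows that the generic member of $|{\mathcal J}_{Z/\PP^2}(d)|$ is smooth away from $p$ and, for a single singular point, irreducible, so that $p$ is the only singularity and $C'\in V^{irr}_d(S)$.

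For $T$-smoothness I would compare $Z$ with the genuine equisingularity (resp.\ equianalytic) scheme $X$ defined by $I^{es}$ (resp.\ by the Tjurina ideal $\langle f,f_x,f_y\rangle$). Since $I^s_{st}\subseteq I^{es}$ (resp.\ ${\mathfrak m}_p\widetilde I^a\subseteq\langle f,f_x,f_y\rangle$), one has $X\subseteq Z$ and an exact sequence $0\to{\mathcal J}_{Z/\PP^2}(d)\to{\mathcal J}_{X/\PP^2}(d)\to{\mathcal Q}(d)\to0$ with ${\mathcal Q}$ a skyscraper sheaf at $p$; as $H^1({\mathcal Q}(d))=0$, the vanishing $H^1({\mathcal J}_{Z/\PP^2}(d))=0$ propagates to $H^1({\mathcal J}_{X/\PP^2}(d))=0$. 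By the $T$-smoothness criterion recalled earlier (\cite[Corollary 6.3]{GK}, \cite[Theorem 3.6]{GL}), this is precisely the condition for $V^{irr}_d(S)$ to be smooth of the expected dimension at $C'$. The real case is handled verbatim: $Z$, $X$ and the linear systems are defined over $\R$, so every step may be carried out conjugation-equivariantly.

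The main obstacle is the equivalence step in the existence part, namely verifying that $f'\equiv f\pmod{I_Z}$ really forces $(C',p)$ to be of type $S$ \emph{and} that the singular point together with its tangent directions can be placed generically. This is exactly the reason for enlarging the equisingularity scheme by one extra unit of multiplicity (the ``$+1$'' at $p$, resp.\ the factor ${\mathfrak m}_p$): it supplies the margin needed both to absorb the freedom of moving $p$ and to guarantee that the generic curve through $Z$ acquires \emph{exactly} the type $S$ rather than a more degenerate one. Making this precise requires the detailed analysis of resolution trees and the finite-determinacy estimates of \cite{GLS3}, which is where the real work lies.
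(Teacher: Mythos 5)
The survey states this lemma without proof, citing [GLS3, Proposition 4.5.12], and your outline follows the same route as that source: the twisted structure sequence of $Z$ converts the $H^1$-vanishing into surjectivity of $H^0(\PP^2,{\mathcal O}_{\PP^2}(d))\to{\mathcal O}_{\PP^2,p}/I_Z$, a lift of the class of $f$ gives the curve $C'$, the defining property of $I_Z$ gives the equivalence of germs, and $T$-smoothness follows from the inclusion of the equisingularity (resp.\ Tjurina) scheme into $Z$ together with the criterion of [GK]/[GL]. That architecture is correct.

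Three points need tightening. (1) The assertion that $f'\equiv f\pmod{I_Z}$ forces $(C',p)\sim(C,p)$ is not a formality: it is the main local theorem behind the whole construction (in the topological case one checks by induction along the blow-ups that adding $\varphi\in I^s_{st}$ neither raises the multiplicities of the strict transforms at the essential infinitely near points nor creates new essential points, the ``$+1$'' at $p$ being what guarantees equimultiplicity at $p$ itself; in the analytic case one integrates a vector field, and the extra factor ${\mathfrak m}_p$ is what makes the flow well defined). You say so yourself, so the proposal is an outline whose hard kernel is outsourced to [GLS3], which is consistent with the level of detail of the survey. (2) In the $T$-smoothness step the criterion requires $H^1$-vanishing for the equisingularity (resp.\ Tjurina) scheme of $C'$ at $p$, not of $C$; you must still check $I_Z(f)\subseteq I^{es}(f')$ --- in the analytic case this holds because $\langle f',f'_x,f'_y\rangle\subseteq\langle f,f_x,f_y\rangle$ and the two ideals have equal colength, hence coincide, while the topological case needs the identification of the resolution trees of $f$ and $f'$. (3) For $C'$ to lie in $V^{irr}_d(S)$ it must be irreducible with $p$ as its only singular point; an arbitrary lift of $f\bmod I_Z$ need not satisfy this, so one must take a generic element of the coset $F_0+H^0(\PP^2,{\mathcal J}_{Z/\PP^2}(d))$ and verify that this affine system has no base points and no fixed components outside $p$ before your Bertini argument applies.
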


The length of $Z^s_{st}(C,z)$ and $\widetilde Z^a_{st}(C,z)$ can be estimated from above by a linear function in $\delta(C,p)$ and $\mu(C,p)$ (see \cite[Corollary 1.1.4 and Lemma 1.1.78]{GLS3}).
\medskip

It is not difficult to see that, for a randomly chosen germ $(C,p)$, the minimal $d$ in Lemma \ref{l-arb1} can be of order $\deg Z$, i.e., of order $\mu(C,p)$, but $\sqrt{\mu(C,p)}$ is required in Theorems \ref{t-analytic} and \ref{t-onesingularity}. So, the second important idea is to replace the germ $(C,p)$, or, more precisely, the corresponding zero-dimensional scheme $Z$ by a generic element in $\Iso(Z)$, the orbit of $Z$ by the action of the group
$\Aut({\mathcal O}_{\PP^2,p})$. The principal bound is as follows (cf. \cite[Propositions 8 and 10, Remark 3]{Sh13} and \cite[Proposition 3.6.1 and Corollary 3.6.4]{GLS3}). Given an irreducible zero-dimensional scheme $Z\subset\PP^2$ supported at $z\in\PP^2$, denote by $M_2(Z)$ the intersection multiplicity of two generic elements of the ideal $I(Z)\subset{\mathcal O}_{\PP^2,z}$. If $Z$ consists of irreducible components $Z_1,...,Z_k$, we set $M_2(Z)=M_2(Z_1)+...+M_2(Z_k)$.

\begin{lemma}\label{l-arb2}
For an arbitrary zero-dimensional $Z$ of degree $\deg Z>2$, there exists $Z'\in\Iso(Z)$ and
$$d\le\frac{\deg Z}{\sqrt{\frac{4}{3}M_2(Z)}}+\sqrt{\frac{4}{3}M_2(Z)}-2$$ such that
$$H^1(\PP^2,{\mathcal J}_{Z'/\PP^2}(d))=0\ .$$

In the case of a reducible scheme $Z=Z_1\cup...\cup Z_r$, we move its supporting points to a general position and choose generic element $Z'_i\in\Iso(Z_i)$ for each component $Z_i$.
\end{lemma}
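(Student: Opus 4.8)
The statement is purely cohomological, so the plan is to establish the asserted $H^1$-vanishing directly; combined with Lemma~\ref{l-arb1} and the linear bounds on $\deg Z$ in terms of $\delta$ and $\mu$, this is exactly what feeds Theorems~\ref{t-analytic} and \ref{t-onesingularity}. First I would treat the irreducible case, where $Z$ is supported at a single point $z$, postponing the assembly of the reducible case to the very end. Writing $s:=\sqrt{\tfrac43 M_2(Z)}$, the goal becomes: for a generic $Z'\in\Iso(Z)$, the vanishing $H^1(\PP^2,\mathcal{J}_{Z'/\PP^2}(d))=0$ holds for the single integer $d_0:=\lfloor \deg Z/s + s\rfloor -2$, the largest integer not exceeding the asserted bound.

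The argument rests on two ingredients. The first is a B\'ezout-type $H^1$-vanishing criterion: if $H^1(\PP^2,\mathcal{J}_{Z'/\PP^2}(d))\neq0$, then there is a reduced irreducible curve $\Gamma\subset\PP^2$ of some degree $t$, $1\le t\le d+1$, capturing an excessive portion of $Z'$, quantitatively of the form $\deg(Z'\cap\Gamma)>t(d+2-t)$. This is obtained by residuating along $\Gamma$, via the exact sequence
\[0\to\mathcal{J}_{\operatorname{Res}_\Gamma Z'/\PP^2}(d-t)\to\mathcal{J}_{Z'/\PP^2}(d)\to\mathcal{J}_{(Z'\cap\Gamma)/\Gamma}(d)\to0,\]
together with the elementary non-speciality of divisors of bounded degree on $\Gamma$. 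The second ingredient, which is the heart of the matter, is a genericity estimate: for a generic $Z'\in\Iso(Z)$ and \emph{every} curve $\Gamma$ of degree $t$ one has $\deg(Z'\cap\Gamma)\le t\,s=t\sqrt{\tfrac43 M_2(Z)}$. Its meaning is that moving $Z$ to a generic point of its orbit under $\Aut(\ko_{\PP^2,z})$ spreads out the local geometry so that no plane curve can meet it faster than linearly in its degree, with slope exactly $\sqrt{\tfrac43 M_2}$; the invariant $M_2$ enters because two generic members of $I(Z)$ already meet in $M_2(Z)$ points, and the constant $\tfrac43$ comes from the worst local configuration.

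Granting these two facts, I would close by optimizing over the degree $t$ of the offending curve. Feeding the genericity bound into the obstruction gives $t(d+2-t)<t\,s$, i.e.\ $d+2-t<s$, and together with $\deg(Z'\cap\Gamma)\le\deg Z$ an elementary optimization over the admissible range of $t$ yields $d+2\le\deg Z/s+s$, contradicting the choice of $d_0$. More transparently, the same bound arises constructively from a single residuation with a curve $B$ of degree $t\approx s$ whose germ at $z$ is a minimal-degree element of $I(Z')$: the trace on $B$ becomes non-special once $d\gtrsim s$, while the residual scheme $\operatorname{Res}_B Z'$ has strictly smaller degree, and balancing the per-curve trace cost (the summand $s$) against the total length $\deg Z$ to be removed (the summand $\deg Z/s$) is precisely the arithmetic--geometric trade-off producing the two terms.

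Finally, for a reducible $Z=Z_1\cup\dots\cup Z_r$ I would move the supporting points into general position and choose a generic $Z_i'\in\Iso(Z_i)$ for each $i$; a Horace-type specialization argument shows that the genericity estimate persists for the union with $\deg Z=\sum_i\deg Z_i$ and the additive $M_2(Z)=\sum_i M_2(Z_i)$ of the statement, after which the same optimization applies verbatim. The main obstacle is squarely the second ingredient: proving the sharp intersection estimate for a generic orbit element, that is, controlling how $\Aut(\ko_{\PP^2,z})$-generic zero-dimensional schemes meet plane curves and pinning down the constant $\tfrac43$. This demands a careful analysis of the orbit $\Iso(Z)$ and of the invariant $M_2(Z)$, and is precisely the content to be imported from Proposition~3.6.1 and Corollary~3.6.4 of \cite{GLS3}.
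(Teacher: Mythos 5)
The survey itself contains no proof of Lemma \ref{l-arb2}: it is quoted from \cite[Propositions 8 and 10]{Sh13} and \cite[Proposition 3.6.1 and Corollary 3.6.4]{GLS3}, so there is no in-paper argument to compare yours with line by line. That said, your outline does reproduce the strategy of those sources: a Castelnuovo-/B\'ezout-type criterion converting $H^1\neq 0$ into an obstructing curve $\Gamma$ of degree $t$ with $\deg(Z'\cap\Gamma)>t(d+2-t)$; an intersection estimate for a generic element of the orbit $\Iso(Z)$; and an optimization over $t$ producing the two terms $\deg Z/s$ and $s$ with $s=\sqrt{\frac{4}{3}M_2(Z)}$. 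The constructive variant you sketch at the end of your third paragraph (residuating against a curve built from a minimal-degree element of $I(Z')$ and inducting on the degree of the residual scheme) is in fact closer to how the cited proof actually runs than the pure contradiction argument.

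As a proof, however, the proposal has a genuine gap, and you name it yourself: the entire content of the lemma is the genericity estimate $\deg(Z'\cap\Gamma)\le t\sqrt{\frac{4}{3}M_2(Z)}$ for generic $Z'\in\Iso(Z)$ --- in particular the appearance of the invariant $M_2$ and the constant $\frac{4}{3}$ --- and you import it wholesale from \cite[Proposition 3.6.1]{GLS3}, i.e.\ from exactly the reference the lemma is attributed to, without indicating how the action of $\Aut(\ko_{\PP^2,z})$ is exploited to prove it. With that ingredient assumed, nothing substantive remains to be shown, so the proposal is an annotated roadmap rather than a proof. A second, smaller issue: in the contradiction form of your optimization, obstructing curves of degree $t$ close to $d+1$ are not excluded, since there $t(d+2-t)$ drops to about $d+1<\deg Z$ and the trivial bound $\deg(Z'\cap\Gamma)\le\deg Z$ gives nothing; one needs either the refined form of the $h^1$-criterion (which bounds the degree of the obstructing curve by roughly half of $d+3$, or by the value of the Castelnuovo function) or the constructive residuation to close that range. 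So the skeleton is right and consistent with the cited sources, but the load-bearing estimate and one range of the case analysis are missing.
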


A proper combination of these two ideas (Lemma \ref{l-arb1} and \ref{l-arb2}) leads to Theorems \ref{t-analytic} and \ref{t-onesingularity}.\\

The following diagram illustrates the present knowledge about the existence of
plane curves of degree $d$ with arbitrary (analytical or topological) singularities $S_1,...,S_r$.

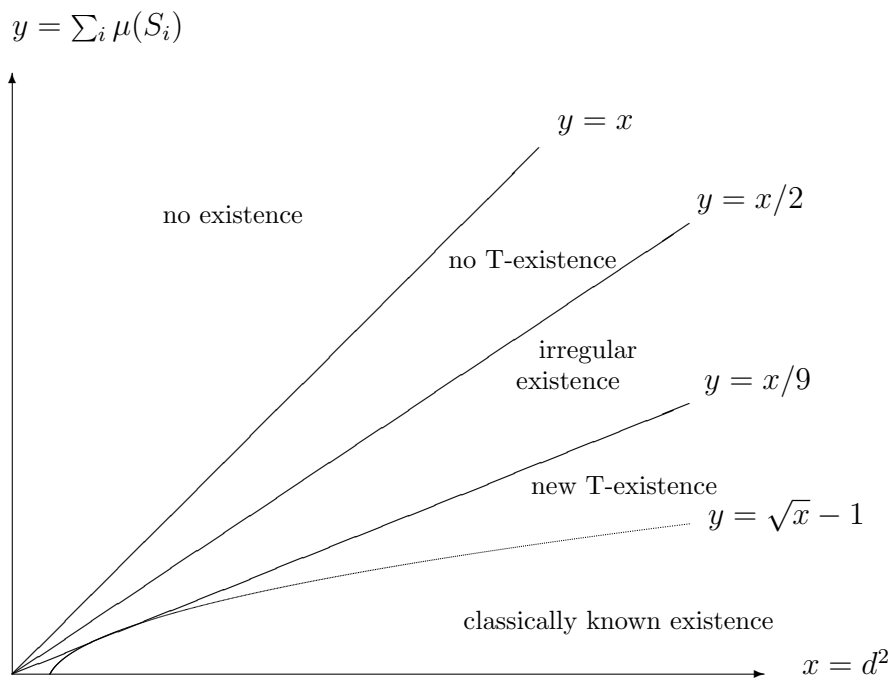
\begin{figure}[ht]
\setlength{\unitlength}{1cm}
\begin{picture}(11,10)(3,0)
\thinlines \put(3,1){\vector(1,0){10}} \put(3,1){\vector(0,1){8}}
\put(3,9.5){$y=\sum_i\mu(S_i)$} \put(13.5,1){$x=d^2$}
\put(3,1){\line(1,1){7}} \put(3,1){\line(5,2){9}}
\put(3,1){\line(3,2){9}}
\qbezier(3.5,1)(4,2)(12,3) \put(10.25,8.25){$y=x$}
\put(12.2,4.8){$y=x/9$}\put(12.1,7.2){$y=x/2$}
\put(12.25,3){$y=\sqrt{x}-1$} \put(5,7){\rm {\footnotesize no
existence}}\put(10,5.2){\rm  {\footnotesize irregular}}\put(9.7,4.8){\rm  {\footnotesize
existence}}\put(9.9,3.4){\rm  {\footnotesize new T-existence}}\put(9,1.6){\rm
 {\footnotesize classically known existence}}\put(8.8,6.4){\rm {\footnotesize  {\footnotesize no T-existence}}}
\end{picture}
\caption{Necessary and sufficient conditions for existence: The
statement holds asymptotically for the region between the
corresponding boundary curves}
\end{figure}

\section{Related and open problems}\label{sec:related}

\subsection{Existence vs. $T$-smoothness and irreducibility}\label{ssec:smoothirred}

The existence problem for singular algebraic curves is tightly related to the geometry of the corresponding equisingular family, especially, to the $T$-smoothness property, which was crucial in the patchworking construction and also used in other constructions. In this section, we construct singular plane algebraic curves of two sorts: \\
(i) those which demonstrate the sharpness of the known $T$-smoothness criteria and \\
(ii) those which yields examples of reducible equisingular families.
\medskip

Recall the {\em universal sufficient conditions for the $T$-smoothness} (see \cite[Theorem 1]{Sh2}, \cite[Corollary 3.9(d)]{GL}, \cite[Theorems 1 and 2]{GLS5} and \cite[Theorems 4.3.8 and 4.3.9]
{GLS3}):

\begin{theorem}\label{t-tsmoothirred}\index{existence!T-smooth}
Let $C$ be an irreducible plane curve of degree $d$ with singular points $p_1,...,p_r$ of topological or  analytic types $S_1,...,S_r$ respectively. Then the equisingular family $V^{irr}_d(S_1,...,S_r)$ is $T$-smooth at $C$ if
either
\begin{equation}\sum_{i=1}^r\tau'(S_i)<4d-4,\label{eq:4d-4}\end{equation}
where $\tau'=\tau^{es}$ if $S_i$ is a topological type and  $\tau'=\tau$ if $S_i$ is an  analytic  type, $i=1,...,r$, or
\begin{equation}\sum_{i=1}^r\gamma'(C,p_i)\le(d+3)^2\ ,\label{eq:tsmooth}\end{equation}
where $\gamma'=\gamma^{es}$ for  $S_i$ is a topological  type and $\gamma'=\gamma^{ea}$ for $S_i$ an analytic type, $i=1,...,r$.
\end{theorem}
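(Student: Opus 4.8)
The plan is to reduce the $T$-smoothness assertion to a single cohomology vanishing statement and then to prove that vanishing by a Bogomolov-instability argument, the two hypotheses (\ref{eq:4d-4}) and (\ref{eq:tsmooth}) corresponding to two regimes in which the argument closes. First I would recall the standard translation of $T$-smoothness into cohomology. To each singular point $p_i$ one attaches a zero-dimensional subscheme $X_i\subset\PP^2$ supported at $p_i$: the equianalytic (Tjurina) scheme when $S_i$ is an analytic type, defined by the Tjurina ideal, and the equisingularity scheme when $S_i$ is a topological type, defined by the equisingularity ideal $I^{es}$. By construction $\deg X_i=\tau(S_i)$, resp.\ $\tau^{es}(S_i)$, so that $\deg X=\sum_i\tau'(S_i)$ for $X=X_1\sqcup\cdots\sqcup X_r$, which is exactly the codimension entering the expected dimension (\ref{expdim}). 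Since $F\in H^0(\PP^2,\mathcal{J}_{X/\PP^2}(d))$, the tangent space to $V^{irr}_d(S_1,\dots,S_r)$ at $C$ is $H^0(\PP^2,\mathcal{J}_{X/\PP^2}(d))$ modulo the one-dimensional space generated by $C$ itself, and a dimension count shows that the obstructions vanish and the tangent space attains the expected dimension as soon as $H^1(\PP^2,\mathcal{J}_{X/\PP^2}(d))=0$. This reduction is the content of \cite[Corollary 6.3]{GK} in the analytic case and \cite[Theorem 3.6]{GL} in the topological case, so it may be quoted; the whole theorem thus reduces to proving $H^1(\PP^2,\mathcal{J}_{X/\PP^2}(d))=0$ under either (\ref{eq:4d-4}) or (\ref{eq:tsmooth}).

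To prove the vanishing I would argue by contradiction: assuming $H^1\neq0$, one produces a curve of controlled degree carrying an anomalously long piece of $X$, and then bounds that length from above using the local structure of the singularity schemes together with B\'ezout's theorem (\ref{bezout}). Concretely, twisting by the anticanonical class one sets $D=(d+3)H=dH-K_{\PP^2}$ and, via the Serre construction applied to a suitable modification of $X$, forms a rank-two bundle $E$ with $c_1(E)=D$ and $c_2(E)\approx\deg X$ fitting into $0\to\ko_{\PP^2}\to E\to\mathcal{J}_{X/\PP^2}(D)\to0$. A nonzero $H^1$ forces $E$ to be Bogomolov-unstable, and since $c_1(E)^2=(d+3)^2$, instability is governed by comparing $(d+3)^2$ with four times the relevant part of $\deg X$. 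The destabilizing sub-line-bundle corresponds to an effective curve $\Phi$ of some degree $1\le k\le d+1$, and instability yields a lower bound on the length of $X$ concentrated on $\Phi$. The invariants $\gamma^{es}(C,p_i)$ and $\gamma^{ea}(C,p_i)$ are precisely defined to package, for each singular point, the maximal length of $X_i$ that any such auxiliary curve $\Phi$ can absorb; feeding these bounds in, the assumption $H^1\neq0$ forces $\sum_i\gamma'(C,p_i)>(d+3)^2$, contradicting (\ref{eq:tsmooth}). For the coarser hypothesis (\ref{eq:4d-4}) the same mechanism applies in the regime where the destabilizing curve has small degree: here the crude estimate $\deg X<4(d-1)$ already rules out an unstable extension, so (\ref{eq:4d-4}) is obtained as the elementary special case, without the refined local invariants.

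The main obstacle is the passage from a nonvanishing $H^1$ to a sharp upper bound on $\deg(\Phi\cap X_i)$ in terms of $\gamma'(C,p_i)$, carried out branch by branch at each singular point. This is the delicate computational core: one must control, for every candidate degree $k$ of the destabilizing curve $\Phi$, how much of the equisingularity (resp.\ equianalytic) scheme attached to the local branches of $C$ at $p_i$ can lie on $\Phi$, and then optimize over $k$; the invariant $\gamma$ is engineered exactly so that this optimization yields the clean quadratic threshold $(d+3)^2$. Irreducibility of $C$ enters here to guarantee that $\Phi$ shares no component with $C$, so that the local intersection lengths are governed by $(\Phi\cdot C)_{p_i}$ and globally bounded via B\'ezout by $k\,d$; this excludes the degenerate situation in which $\Phi$ coincides with a component of $C$ and the estimate would break down. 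For the complete verification of these local estimates and the precise definition of the $\gamma$-invariants I would refer to \cite[Theorems 4.3.8 and 4.3.9]{GLS3} and \cite[Theorem 1]{Sh2}.
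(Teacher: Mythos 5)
The survey itself does not prove Theorem \ref{t-tsmoothirred}; it only states it with references to \cite{Sh2}, \cite{GL}, \cite{GLS5} and \cite[Theorems 4.3.8 and 4.3.9]{GLS3}, so your sketch has to be measured against those sources. Your first step --- identifying the tangent space of the equisingular family with $H^0(\PP^2,\mathcal{J}_{X/\PP^2}(d))$ modulo the equation of $C$, for $X$ the union of the Tjurina (equianalytic) and equisingularity schemes, and reducing $T$-smoothness to $H^1(\PP^2,\mathcal{J}_{X/\PP^2}(d))=0$ --- is exactly the content of \cite[Corollary 6.3]{GK} and \cite[Theorem 3.6]{GL} and is the correct and standard reduction. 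For criterion (\ref{eq:tsmooth}) your Bogomolov route (Serre construction of a rank-two bundle $E$ with $c_1(E)=(d+3)H$ and $c_2(E)=\deg X_0$ for a minimal subscheme with non-vanishing $H^1$, instability producing a low-degree curve $\Phi$, and the $\gamma$-invariants packaging the maximal local intersection $\deg(X_i\cap\Phi)$) is indeed the mechanism behind \cite[Theorem 4.3.9]{GLS3}, and the appearance of the threshold $(d+3)^2=c_1(E)^2$ is explained exactly as you say. Modulo the Cayley--Bacharach hypothesis needed to make the extension locally free (which you wave at with ``a suitable modification of $X$''), this half of the proposal is sound.

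The genuine gap is in your treatment of (\ref{eq:4d-4}). First, the logic of Bogomolov's theorem is inverted in your sketch: since $4(4d-4)\le(d+3)^2$ for all $d$ (as $(d-5)^2\ge0$), the estimate $\deg X<4d-4$ does not ``rule out an unstable extension'' --- on the contrary, together with $H^1\neq0$ it \emph{forces} $E$ to be Bogomolov-unstable, so the argument does not terminate there; one still has to extract a contradiction from the destabilizing curve, and the numerical output of instability alone (roughly $\deg X_0\ge m(d+3-m)$ for the degree $m$ of $\Phi$) is perfectly compatible with $\deg X<4d-4$, e.g.\ for $m=1$. Second, the local input needed to close the $4d-4$ case is a bound on $\deg(X_i\cap\Phi)$ in terms of $\tau'(S_i)$, not of $\gamma'(C,p_i)$, and this is obtained in \cite{Sh2}, \cite[Corollary 3.9(d)]{GL} and \cite{GLS5} by a genuinely different and more elementary mechanism: a Castelnuovo-function (Davis-type) analysis of $\mathcal{J}_{X/\PP^2}$ combined with B\'ezout applied to $C$ (irreducible, so sharing no component with the auxiliary curve) --- no vector bundles appear. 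So (\ref{eq:4d-4}) is not ``the elementary special case'' of your instability argument; it requires its own proof, and as written your sketch establishes only criterion (\ref{eq:tsmooth}).
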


The symbols $\tau^{es},\tau,\gamma^{es},\gamma^{ea}$ 
are topological or  analytic singularity invariants. For precise definitions we refer to \cite[Section 1.2.3.1 and 
Definition 1.1.63]
{GLS3}, and for the detailed study of their properties to \cite[Corollary 1.1.64, 
Proposition 1.2.26]{GLS3}. Here we provide only the following information used below (see also Preliminaries):
\begin{itemize}\item $\tau$ is the Tjurina number, i.e., the dimension of the Tjurina algebra
$$\tau(C,p)=\tau(f)=\dim_\CC{\mathcal O}_{\PP^2,z}/\langle f, f_x, f_y\rangle,$$ where a square-free element $f\in{\mathcal O}_{\PP^2,z}$ defines the curve germ $(C,p)$, and $x,y$ are local coordinates;
\item $\tau^{es}(C,p)$ is the codimension of the $\mu$-const stratum in the versal deformation base of the curve germ $(C,p)$; it always satisfies $$\tau^{es}(S)\le\tau(S)\le\mu(S),$$ with equalities for a simple singularity type $S$.
    \item $\gamma^{es}(C,p)\le\gamma^{ea}(C,p)$ for any curve germ with equality if the singularity is simple, furthermore
        $$\gamma^{ea}(S)\begin{cases}=(\mu(S)+1)^2,\ &\text{if}\ S=A_n,\ n\ge1,\\
        \le(\mu(S)+1)^2,\ &\text{otherwise}.\end{cases}$$
\end{itemize}

We also recall important particular cases (see \cite[Corollaries 4.3.6, 4.3.11, 
formula (1.2.3.1)]{GLS3} and Theorem \ref{t-tsmoothirred}):

\begin{theorem}\label{t-particular}\index{existence!T-smooth}
(1) An irreducible plane curve of degree $d$ with $n$ nodes and $k$ cusps as its only singularities belongs to a $T$-smooth equisingular family if either
\begin{equation}k<3d\ ,\label{eq:3d}\end{equation}
(any $n\geq0$, i.e., nodes do not count) or
\begin{equation}4n+9k\le(d+3)^3\ .\label{eq:nodecusp}\end{equation}

(2) An irreducible plane curve of degree $d$ with $r$ singular points of simple singularity types $S_1,...,S_r$ belongs to a $T$-smooth equisingular family if
\begin{equation}\sum_{S_i\in A}(\mu(S_i)+1)^2+\sum_{S_i\in D,E}\max\left\{(\mu(S_i)-1)^2,\frac{1}{2}(\mu(S_i)+2)^2\right\}\le(d+3)^3.\label{eq:AD}\end{equation}
\end{theorem}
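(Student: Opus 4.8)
The plan is to deduce Theorem~\ref{t-particular} from the two universal $T$-smoothness criteria (\ref{eq:4d-4}) and (\ref{eq:tsmooth}) of Theorem~\ref{t-tsmoothirred} by substituting the explicit invariants of the simple singularities involved, while treating the sharp ``nodes do not count'' bound (\ref{eq:3d}) by a separate argument.

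First I would record the numerology. The node $A_1$ and the cusp $A_2$ are simple, so their topological and analytic types agree and $\gamma^{es}=\gamma^{ea}$; by the closed formula $\gamma^{ea}(A_n)=(\mu+1)^2$ (the first bulleted value, equivalently \cite[formula (1.2.3.1)]{GLS3}) one gets $\gamma'(A_1)=4$ and $\gamma'(A_2)=9$. Inserting these into criterion (\ref{eq:tsmooth}) gives $\sum_i\gamma'(C,p_i)=4n+9k$, so (\ref{eq:tsmooth}) reads $4n+9k\le(d+3)^2$, which is the second alternative (\ref{eq:nodecusp}) of part~(1). Part~(2) is identical in spirit: from \cite[formula (1.2.3.1)]{GLS3} I would quote $\gamma^{ea}(A_n)=(\mu+1)^2$ and $\gamma^{ea}(S)=\max\{(\mu(S)-1)^2,\tfrac12(\mu(S)+2)^2\}$ for $S$ of type $D$ or $E$; since every $S_i$ is simple, $\gamma'=\gamma^{es}=\gamma^{ea}$ whether one argues topologically or analytically, and plugging these sums into (\ref{eq:tsmooth}) turns $\sum_i\gamma'(C,p_i)\le(d+3)^2$ into exactly (\ref{eq:AD}). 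Both of these are therefore routine substitutions once the invariant table is in hand.

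The genuinely sharper assertion is the first alternative (\ref{eq:3d}), namely $k<3d$ with no constraint at all on $n$. This cannot come from the crude counts in Theorem~\ref{t-tsmoothirred}: criterion (\ref{eq:4d-4}) with $\tau^{es}(A_1)=1$, $\tau^{es}(A_2)=2$ only yields $n+2k<4d-4$, and (\ref{eq:tsmooth}) yields (\ref{eq:nodecusp}), both of which involve $n$. Instead one invokes the dedicated $H^1$-vanishing result \cite[Corollary 4.3.6]{GLS3}. The mechanism is that $T$-smoothness at $C$ is equivalent to $H^1(\PP^2,\mathcal{J}_{X/\PP^2}(d))=0$ for the equisingularity scheme $X$, in which each node contributes a single reduced point and each cusp a length-two curvilinear scheme; one then shows that the reduced points arising from nodes never obstruct and that the cuspidal part has vanishing $H^1$ as soon as $k<3d$, through a B\'ezout-type estimate (a failure of the vanishing would force a curve of controlled degree through the cuspidal scheme meeting an auxiliary curve in more than the admissible number of points).

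I expect this last step to be the main obstacle: it is the only place where the general machinery is insufficient, and one must exploit the special geometry of cusps together with the fact that node-smoothings are always unobstructed. By contrast, the derivations of (\ref{eq:nodecusp}) and (\ref{eq:AD}) are immediate consequences of (\ref{eq:tsmooth}) and the tabulated values of $\gamma^{es}=\gamma^{ea}$ for simple singularities.
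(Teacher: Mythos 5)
Your proposal is correct and takes essentially the same route as the paper, which offers no proof of its own but merely recalls the result from \cite{GLS3} via exactly the decomposition you use: conditions (\ref{eq:nodecusp}) and (\ref{eq:AD}) are routine substitutions of the tabulated values of $\gamma^{ea}=\gamma^{es}$ for simple singularities into the general criterion (\ref{eq:tsmooth}) of Theorem \ref{t-tsmoothirred}, while the classical Severi--Segre--Zariski bound $k<3d$ requires the separate, dedicated $H^1$-vanishing argument you indicate. Note only that your substitution correctly yields the exponent $2$ in $(d+3)^2$, so the $(d+3)^3$ printed in (\ref{eq:nodecusp}) and (\ref{eq:AD}) must be a typo for $(d+3)^2$ --- as is confirmed by the paper's own sharpness discussion after Theorem \ref{t-nodecusp1}, where $9k=d^2+9d$ is said to show sharpness of the coefficient $9$, which only makes sense against a quadratic right-hand side.
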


\begin{remark}{\em
Observe that, first, the right-hand sides in the {\em $4d$-criterion} (\ref{eq:4d-4}) and the  {\em $3d$-criterion} (\ref{eq:3d}) for $T$-smoothness are only linear in $d$, while the sufficient condition (\ref{eq:analytic}) for existence is quadratic in $d$ in the right-hand side. Second, the $T$-smoothness restrictions (\ref{eq:3d}) and (\ref{eq:nodecusp}) are far away from the existence conditions in Theorem \ref{t-nodecusp}, and, third, the invariants assigned to singular points in the left-hand side of (\ref{eq:tsmooth}) and 
\ref{eq:AD}) 
are in general of order $\mu^2(S)$, while in (\ref{eq:analytic}) we have just $\mu(S)$. That is, for general analytic or topological types  there exists a wide range of nonempty equisingular families, which do not fall to the limits of Theorems \ref{t-tsmoothirred} and \ref{t-particular}. On the other hand, for curves with simple (Theorem \ref{t-simple1}) , ordinary (Theorem \ref{t-ordinary1}), and semi-quasihomogeneous singularites (Theorem \ref{t-squ1}), we have $T$-smooth equisingular families with asymptotically proper existence bounds.}
\end{remark}

Thus, natural questions arise for arbitrary singularities:

\smallskip{\it (1) Does the difference between the left-hand sides of the sufficient $T$-smoothness
criteria in Theorems \ref{t-tsmoothirred} and \ref{t-particular} and the left-hand side of the sufficient existence criterion (\ref{eq:analytic}) reflect the lack of $T$-smoothness
outside the limits pointed in Theorems \ref{t-tsmoothirred} and \ref{t-particular}?
\smallskip

 (2) Or, can the singularity invariants in the left-hand side of the sufficient $T$-smoothness criteria be essentially diminished?}

\medskip
We exhibit a series of singular plane algebraic curves and their families demonstrating that the linear bounds (\ref{eq:4d-4}) and (\ref{eq:3d}) are sharp, the coefficients in the left-hand side of (\ref{eq:nodecusp}) are sharp, and the singularity invariants in the other $T$-smoothness 
conditions can, in principle, be improved only by a constant factor, while their order with respect to the Milnor number persists. We mainly use the constructions discussed in Sections \ref{sec:2} and \ref{sec:3}. For all details and more examples see \cite[Sections 4.2.3, 4.3.3]
{GLS3} and references therein.

The following example, which is due to du Plessis and Wall \cite{DPW} (elaborated further in \cite{Gou}), shows that the bound (\ref{eq:4d-4}) is sharp.

\begin{theorem}\label{t-DPW}\index{examples!non T-smooth}
For any $d\ge5$ the irreducible curve $C\subset\PP^2$ given by $x_1^d+x_2^5x_0^{d-5}+x_2^d=0$ has the unique singular point $z=(1,0,0)$ with Tjurina number $4d-4$, and the equisingular family $V^{irr}_d(S)$, where $S$ is the analytic type of the germ $(C,z)$, is not $T$-smooth. Furthermore, the family $V^{irr}_d(S)$ is nonreduced for $d\le6$,
consists of two intersecting components for $d=7$, and is reduced, irreducible with a singular locus containing $C$ for $d\ge8$.
\end{theorem}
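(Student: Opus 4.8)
The plan is to first pin down the local analytic type of $(C,z)$, then detect the failure of $T$-smoothness by a direct cohomology count, and finally analyse the local structure of the equianalytic family by deformation theory.

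First I would work in the affine chart $x_0=1$, where $C$ is given by $f(x,y)=x^d+y^5+y^d$ with $x=x_1$, $y=x_2$. Solving $\nabla F=0$ shows that $z=(1:0:0)$ is the only singular point (for $d>5$ the alternative locus $x_0=0$ does not meet $C$). The principal part of $f$ with respect to the weights $w_x=5,\ w_y=d$ is the quasihomogeneous germ $x^d+y^5$, and the extra monomial $y^d$ lies in its Jacobian ideal $\langle x^{d-1},y^4\rangle$; hence $f$ is contact equivalent to $x^d+y^5$ and $\tau(S)=\mu(S)=(d-1)(5-1)=4d-4$. Irreducibility of $C$ for $d\ge6$ follows from the standard irreducibility criterion applied to $x^d+c(y)$ with $c(y)=y^5(1+y^{d-5})$, whose multiplicity-one factors prevent $c$ from being a perfect power; the boundary case $d=5$ degenerates to five concurrent lines, but still carries an ordinary quintuple point with $\tau=16=4d-4$, so the remaining assertions are uniform in $d\ge5$.

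Next I would establish non-$T$-smoothness by computing $h^1$ of the ideal sheaf of the Tjurina scheme $Z$ of $(C,z)$. Since the Tjurina ideal at $z$ equals the complete-intersection ideal $\langle x^{d-1},y^4\rangle$, the scheme $Z$ is a fat point of degree $\deg Z=4(d-1)=\tau$, and a degree-$d$ form $G$ is tangent to $V^{irr}_d(S)$ at $C$ iff its dehomogenisation lies in this ideal, i.e. iff its coefficients of $x^iy^j$ vanish for $0\le i\le d-2,\ 0\le j\le 3$. Among these $4d-4$ monomials exactly one, namely $x^{d-2}y^3$, has total degree $d+1>d$ and therefore imposes no condition on $G$; hence the equianalytic conditions cut out a subspace of codimension $4d-5$, so $h^0(\mathcal{J}_{Z/\PP^2}(d))=\binom{d+2}{2}-(4d-5)$. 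Comparing with $\chi(\mathcal{J}_{Z/\PP^2}(d))=\binom{d+2}{2}-(4d-4)$ gives $h^1(\PP^2,\mathcal{J}_{Z/\PP^2}(d))=1$. By the standard identification of tangent space and obstructions of the equianalytic family (cf.\ Lemma \ref{l-arb1} and the Preliminaries), the Zariski tangent space to $V^{irr}_d(S)$ at $C$ exceeds the expected dimension (\ref{expdim}) by exactly $1$, so $V^{irr}_d(S)$ is not $T$-smooth at $C$.

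For the refined scheme structure I would exploit that the obstruction space $H^1(\PP^2,\mathcal{J}_{Z/\PP^2}(d))\cong\CC$ is one-dimensional. Thus, near $C$, the germ of $V^{irr}_d(S)$ is cut out inside a smooth germ of dimension $e+1$ (with $e$ the expected dimension from (\ref{expdim})) by a single analytic equation $\phi$; in every case $\operatorname{mult}_C\phi\ge2$, since the tangent space fills the ambient $(e+1)$-space. The three regimes then correspond to $\phi$ being non-reduced ($d\le6$), a product of two distinct smooth branches meeting at $C$ ($d=7$), or reduced and irreducible with $C\in\Sing$ ($d\ge8$). To obtain $\phi$ I would fix the singular point at $z$, normalise the curve into a semi-quasihomogeneous prenormal form $y^5+x^d+(\text{weighted-higher terms})$ using the $\Aut(\ko_{\PP^2,z})$-action, and compute the primary obstruction as the quadratic part of the Kodaira--Spencer map with values in $H^1\cong\CC$, following du Plessis and Wall \cite{DPW} (see also \cite{Gou}).

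The main obstacle is precisely this last computation: determining $\phi$ (or at least its leading terms and its factorisation type) explicitly as a function of $d$, and locating the numerical thresholds at $d=6,7,8$. This requires controlling the \emph{second-order} deformation theory --- equivalently, the structure of the discriminant of the versal unfolding restricted to the $(e+1)$-dimensional space of degree-$d$ realisations --- whereas only the first-order ($h^1$) data is needed for non-$T$-smoothness. The passage from ``the obstruction is one-dimensional'' to ``the defining equation factors thus-and-so'' is the genuinely delicate part, and is where the cited discriminant analysis of du Plessis--Wall does the real work.
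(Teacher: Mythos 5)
The paper itself offers no proof of Theorem \ref{t-DPW}: it is quoted from du Plessis--Wall \cite{DPW} and Gourevich \cite{Gou}. Measured against that, your argument for the first two assertions is correct and complete. The factorisation $x^d+y^5+y^d=x^d+y^5(1+y^{d-5})$ shows (via $\tilde y=y(1+y^{d-5})^{1/5}$) that the germ at $z$ is analytically equivalent to $x^d+y^5$, whence $\tau=\mu=4d-4$ and the Tjurina ideal is the monomial ideal $\langle x^{d-1},y^4\rangle$; your $h^1$ count is then exactly right: of the $4d-4$ monomials $x^iy^j$, $0\le i\le d-2$, $0\le j\le 3$, whose coefficients must vanish, only $x^{d-2}y^3$ has degree $d+1>d$, so the conditions on $H^0(\ko_{\PP^2}(d))$ have rank $4d-5$ and $h^1(\PP^2,{\mathcal J}_{Z/\PP^2}(d))=1$. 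By \cite[Theorem 2.2.40]{GLS3} the tangent space to $V^{irr}_d(S)$ at $C$ then exceeds the expected dimension by one, so the family is not $T$-smooth there; this is also the cleanest way to see why the threshold $4d-4$ in (\ref{eq:4d-4}) is sharp. One caveat concerns the statement rather than your proof: for $d=5$ the curve is $x_1^5+2x_2^5=0$, a union of five concurrent lines, hence reducible, so the boundary case does not literally fit $V^{irr}_5(S)$.

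The genuine gap is the final sentence of the theorem. You correctly observe that, the obstruction space being one-dimensional, the germ of $V^{irr}_d(S)$ at $C$ is cut out by a single analytic equation $\phi$ in a smooth germ of dimension $e+1$ ($e$ the expected dimension), and that the trichotomy ($\phi$ nonreduced for $d\le6$, two smooth intersecting branches for $d=7$, reduced irreducible and singular at $C$ for $d\ge8$) amounts to identifying the leading part of $\phi$. But you do not compute $\phi$, and nothing in the first-order analysis distinguishes the three regimes --- it does not even exclude $\phi\equiv0$, i.e.\ that $V^{irr}_d(S)$ is smooth of dimension $e+1$ at $C$. That second-order computation, the restriction of the discriminant of the versal unfolding of $x^d+y^5$ to the affine family of degree-$d$ realisations, is precisely the content of \cite{DPW} and \cite{Gou}; your proposal, like the survey, ultimately cites it rather than supplies it.
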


The examples found in \cite[Proposition 4.5]{GLS5} (see also \cite[Theorem 4.3.23]{GLS3}) show that the classical Severi-Segre-Zariski bound (\ref{eq:3d}) is sharp and that the coefficients $4$ and $9$ in (\ref{eq:nodecusp}) are sharp.

\begin{theorem}\label{t-nodecusp1}\index{eexamples!non T-smooth}
Let $p\ge6$, $q\ge9$. Then the variety $V^{irr}_d(n\cdot A_1,k\cdot A_2)$ has a non-$T$-smooth component if
\begin{enumerate}\item[(a)] $d=3p$, $n=0$, $k=p^2+3p$, or
\item[(b)] $d=2q$, $n=q^2-9q$, $k=6q$.
\end{enumerate}
\end{theorem}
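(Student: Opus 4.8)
The plan is to exhibit, in each case, an explicit irreducible curve $C\subset\PP^2$ of the prescribed degree carrying exactly the prescribed nodes and cusps, and then to certify that $C$ is a non-$T$-smooth point of its equisingular family by showing $H^1(\PP^2,\mathcal J_X(d))\neq0$, where $X\subset\PP^2$ is the zero-dimensional equisingularity scheme of the singular points of $C$. Recall (Preliminaries, and the $H^1$-criterion underlying Lemma \ref{l-arb1}) that the Zariski tangent space of $V^{irr}_d(S_1,\dots,S_r)$ at $C=\{f=0\}$ is $H^0(\mathcal J_X(d))/\langle f\rangle$, so its dimension equals the expected value (\ref{expdim}) augmented by $h^1(\mathcal J_X(d))$. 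Since the family always has dimension at least the expected one, any curve with $h^1(\mathcal J_X(d))>0$ is automatically a non-$T$-smooth point, and the component through it is non-$T$-smooth. The local pieces of $X$ are explicit: a node ($\tau^{es}=1$) contributes its reduced point, while a cusp ($\tau^{es}=2$) contributes the length-$2$ scheme fattened along its cuspidal tangent, i.e. locally the ideal $\langle u,v^2\rangle$ for the normal form $u^2+v^3$ with tangent $\{u=0\}$.

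For (a) I would take $C=\{A^2M+B^3=0\}$ with $A,B,M$ generic of degrees $p+3,\ p,\ p-6$ (so $\deg M\ge0$ forces $p\ge6$; for $p=6$ this is the classical Zariski curve $A^2+B^3$ of degree $18$). At each of the $(p+3)p=p^2+3p$ transverse points of $\{A=0\}\cap\{B=0\}$ the local equation is $(\text{unit})\cdot A^2+B^3$, an $A_2$ with cuspidal tangent $\{A=0\}$, and these are the only singularities, giving $n=0$, $k=p^2+3p$. The crux is that all cusps lie on $\{B=0\}$ with tangents along $\{A=0\}$. Hence every $\Phi=A\Psi+B^2E$ with $\deg\Psi=2p-3$, $\deg E=p$ contains $X$ (each summand lies in the cusp ideal $\langle A,B^2\rangle$ locally), and this linear map is injective since $\gcd(A,B)=1$ and $\deg E<\deg A$. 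Therefore
$$h^0(\mathcal J_X(3p))\ \ge\ \binom{2p-1}{2}+\binom{p+2}{2}=\frac{5p^2-3p+4}{2},$$
while the expected value is $\binom{3p+2}{2}-2(p^2+3p)=\frac{5p^2-3p+2}{2}$, so $h^1(\mathcal J_X(3p))\ge1>0$. (Indeed $C=A(AM)+B^2\cdot B$ lies in this system.)

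For (b) I would take $C=\{A^2+B^3D^2=0\}$ with $A,B,D$ generic of degrees $q,\ 6,\ q-9$ (so $q\ge9$). A local computation yields an $A_2$ with tangent $\{A=0\}$ at each of the $6q$ points of $\{A=0\}\cap\{B=0\}$ (normal form $(\text{unit})A^2+(\text{unit})B^3$) and an $A_1$ at each of the $q(q-9)=q^2-9q$ points of $\{A=0\}\cap\{D=0\}$ (the $2$-jet is a nondegenerate quadratic in $A,D$, namely $A^2+c\,D^2$ with $c\neq0$); one checks there are no further singular points and that $\{B=0\}\cap\{D=0\}$ avoids $C$. Again the cusp tangents lie along $\{A=0\}$ and the nodes on $\{A=0\}\cap\{D=0\}$, so every $\Phi=A\Psi+B^2DE$ with $\deg\Psi=q$, $\deg E=q-3$ contains $X$: modulo $A$ it equals $B^2DE$, which vanishes doubly along $\{A=0\}$ at the cusps and vanishes at the nodes through the factor $D$. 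The map is injective ($\deg E<\deg A$), whence
$$h^0(\mathcal J_X(2q))\ \ge\ \binom{q+2}{2}+\binom{q-1}{2}=q^2+2,$$
against the expected $\binom{2q+2}{2}-(q^2+3q)=q^2+1$; thus $h^1(\mathcal J_X(2q))\ge1>0$.

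The routine inputs are the dimension counts and the identification of the local equisingularity ideals of $A_1$ and $A_2$. The main obstacle is the genericity verification: for generic $A,B(,D,M)$ one must show $C$ is irreducible and has \emph{exactly} the claimed singularities and no others. The part on $\{A=0\}$ is direct, since there the gradient reduces to the $B^3$- resp. $B^3D^2$-term and forces $B=0$ or $D=0$; ruling out extra singular points off $\{A=0\}$ and proving irreducibility of the special pencils $A^2M+B^3$ and $A^2+B^3D^2$ needs a Bertini-type argument in the parameter space of $(A,B,D,M)$, which I would either carry out directly or import from \cite[Proposition 4.5]{GLS5} / \cite[Theorem 4.3.23]{GLS3}. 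The sharpness of the coefficients in (\ref{eq:nodecusp}) then follows by comparing $4n+9k$ with $(d+3)^2$: case (a) (with $n=0$) pins the coefficient of $k$, and case (b) pins the coefficient of $n$.
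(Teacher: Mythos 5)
Your construction is exactly the paper's: the displayed curve $AP^2R^2+BQ^2$ with $a=d-3p-2r$ is evidently a typo for $AP^3R^2+BQ^2$, and your $A^2M+B^3$ (case (a)) and $A^2+B^3D^2$ (case (b)) are precisely these curves after renaming the factors; your certification of the failure of $T$-smoothness via $h^1(\mathcal{J}_{Z^{es}(C)}(d))>0$, exhibited through the excess linear systems $A\Psi+B^2E$ resp.\ $A\Psi+B^2DE$, is the computation the paper delegates to the proof of \cite[Theorem 4.3.23]{GLS3}. The local equisingularity ideals and all dimension counts check out, so this is essentially the same proof with the deferred step (and the same deferred genericity/irreducibility verification) written out.
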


In the series (a), $9k=d^2+9d$, and hence the coefficient $9$ in (\ref{eq:nodecusp}) is sharp. In the series (b), $4n=d^2-18d$ and also $k=3d$, which yields the sharpness of (\ref{eq:3d}) and of the coefficient $4$ in (\ref{eq:nodecusp}). A curve $C\in V^{irr}_d(n\cdot A_1,k\cdot A_2)$ at which the $T$-smoothness fails can be constructed by Zariski's method as in Theorem \ref{t-sqh}: namely, we set $C$ to be given by
$$AP^2R^2+BQ^2=0,$$ where $A,B,P,Q,R$ are generic polynomials of degrees $a,b,p,q,r$, respectively, such that
$$a=d-3p-2r\ge0,\quad b=d-2q\ge0.$$
The series (a) corresponds to $r=0$, $d=3p$, $q=p+3$, $p\ge6$, while the series (b) corresponds to $d=2q$, $p=6$, $r=q-9$. For the failure of the $T$-smoothness see the proof of \cite[Theorem 4.3.23]{GLS3}.

The next series of examples were found in \cite[Theorems 5 and 6]{GLS0} (see also \cite[Theorems 4.3.24 and 4.3.25]{GLS3}). They show that the coefficients of the $A_n$ and $D_n$ singularities in (\ref{eq:AD}) may, in principle, be reduced, but only by a constant factor $\ge\frac{1}{4}$.

\begin{theorem}\label{t-AD1} \index{examples!non T-smooth}
Let $l\ge2$, $0\le s\le l-2$, $q\ge\frac{3}{l-s-1}$ be integers.

(1) Let $k>2l-s$ be an integer. Then there exists an irreducible plane curve $C$ of degree $d=q(k+s)$ having precisely $q^2$ singular points, all of type $A_{kl+s-1}$, such that the family $V^{irr}_d(q^2\cdot A_{kl+s-1})$ is not $T$-smooth at $C$. Moreover,
\begin{itemize}\item if $k>4l-s$, then $C$ belongs to a component of $V^{irr}_d(q^2\cdot A_{kl+s-1})$ of expected dimension which is singular at $C$;
\item if $k\ge\max\{l^2+2l,4l+4-s\}$, then the germ of $V^{irr}_d(q^2\cdot A_{kl+s-1})$ at $C$ is a singular, normal complete intersection.
\end{itemize}

(2) Let $k>2l-s+1$ be integer. Then there exists an irreducible plane curve $C$ of degree $d=q(k+s)$ having precisely $q^2$ singular points, all of type $D_{kl+s+1}$, such that the family $V^{irr}_d(q^2\cdot D_{kl+s+1})$ is not $T$-smooth at $C$. Moreover,
\begin{itemize}\item if $k>4l-s$, then $C$ belongs to a component of $V^{irr}_d(q^2\cdot D_{kl+s+1})$ of expected dimension which is singular at $C$;
\item if $k\ge\max\{l^2+2l+2,4l+4-s\}$, then $V^{irr}_d(q^2\cdot D_{kl+s+1})$ is a singular, normal locally complete intersection at $C$.
\end{itemize}
\end{theorem}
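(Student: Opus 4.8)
The plan is to establish both parts of Theorem~\ref{t-AD1} by a single scheme of argument, treating $A_{kl+s-1}$ and $D_{kl+s+1}$ in parallel and recording only the (purely local) points where the two cases diverge. First I would produce the curve $C$ by a generalized Zariski construction in the spirit of Theorem~\ref{t-sqh} and of the series in Theorem~\ref{t-nodecusp1}. Fix two generic curves $P,Q$ of degree $q$ meeting transversally in $q^2$ points $p_1,\dots,p_{q^2}$, and write down a defining equation for $C$ of degree $d=q(k+s)$ — built from $P$, $Q$ and auxiliary generic forms, with a factor raised to the $l$-th power so as to force high tangency — such that the germ $(C,p_\nu)$ is analytically of type $A_{kl+s-1}$ (resp.\ $D_{kl+s+1}$) at each $p_\nu$ and $C$ is smooth elsewhere; the explicit polynomials are those of \cite[Theorems 5 and 6]{GLS0}. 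The inequality $k>2l-s$ (resp.\ $k>2l-s+1$) is exactly what guarantees that the power structure realizes the stated singularity type without degenerating further, and genericity of $P,Q$ and of the auxiliary forms yields irreducibility of $C$ together with $\#\Sing C=q^2$.

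Next I would prove the failure of $T$-smoothness through the standard cohomological criterion: $V^{irr}_d(q^2\cdot A_{kl+s-1})$ is $T$-smooth at $C$ if and only if $H^1(\PP^2,\mathcal J_{Z/\PP^2}(d))=0$, where $Z$ is the equisingularity (resp.\ equianalytic) zero-dimensional scheme of the configuration $\{p_\nu\}$ of the kind appearing in Lemma~\ref{l-arb1} (cf.\ \cite{GK},\cite{GL}). The decisive feature is that all $q^2$ fat points $Z_\nu$ lie on the single degree-$q$ curve $\{P=0\}$, hence occupy a very special position. I would exploit this via the residual exact sequence
$$0\longrightarrow \mathcal J_{(Z:P)/\PP^2}(d-q)\longrightarrow \mathcal J_{Z/\PP^2}(d)\longrightarrow \mathcal J_{(Z\cap P)/P}(d)\longrightarrow 0,$$
which reduces the computation of $H^1(\mathcal J_{Z/\PP^2}(d))$ to cohomology on the curve $P$ and on the residual scheme $Z:P$. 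Because the local multiplicities grow like $kl+s$ while the points are confined to a curve of degree $q$, the restricted term carries $H^1\neq 0$; a Riemann--Roch count on $\PP^2$ then shows that $Z$ imposes strictly fewer than $\deg Z$ conditions in degree $d$, i.e.\ $h^1(\mathcal J_{Z/\PP^2}(d))>0$, which is precisely non-$T$-smoothness.

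For the ``Moreover'' assertions I would refine the same computation. When $2l-s<k\le 4l-s$ the superabundance is large enough that the parameter count for the constructed family already exceeds the expected dimension~(\ref{expdim}), so $C$ lies on a component of $V^{irr}_d$ of \emph{larger} than expected dimension. When $k>4l-s$ the residual term stabilizes so that the constructed family fills a component of exactly the expected dimension, while $h^1(\mathcal J_{Z/\PP^2}(d))>0$ persists; since the Zariski tangent space has dimension $h^0(\mathcal J_{Z/\PP^2}(d))-1$, strictly larger than the dimension of this component, $C$ is a singular point of it. Finally, for $k\ge\max\{l^2+2l,4l+4-s\}$ (resp.\ $\max\{l^2+2l+2,4l+4-s\}$) I would prove that the germ $(V^{irr}_d,C)$ is a normal complete intersection by showing that the equisingularity ideal is generated locally by exactly as many equations as its codimension — so that they form a regular sequence — and that the germ is regular in codimension one, whence Serre's $R_1+S_2$ criterion applies.

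The main obstacle will be this last step: passing from the pointwise inequality $h^1(\mathcal J_{Z/\PP^2}(d))>0$ to the \emph{global} statements about the component — that it has the expected dimension yet is singular at $C$, and that the germ is a normal (locally) complete intersection. This requires controlling $h^1$ of the twisted ideal sheaf not only at $C$ but in a neighbourhood within the family, and tracking how the residual scheme $Z:P$ deforms; the delicate bookkeeping of these cohomological estimates, together with the verification of the regular-sequence and codimension-one-regularity conditions, is exactly where the thresholds $k>4l-s$ and $k\ge l^2+2l$ (resp.\ $l^2+2l+2$) enter, and carrying it out in full is the technical heart of \cite{GLS0} (see also \cite[Theorems 4.3.24 and 4.3.25]{GLS3}).
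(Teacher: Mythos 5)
Your construction is not the one the paper (or \cite{GLS0}) uses, and as described it cannot produce the curve at the stated degree. The paper first exhibits a \emph{single} affine curve of degree $k+s$ with exactly one singular point of type $A_{kl+s-1}$, namely $(y+y^l-x^l)^2(1+\lambda_1x^{k+s-2l})+\lambda_2x^sy^k+y^{k+s}=0$ (resp.\ the analogous equation with an extra factor $x$ for $D_{kl+s+1}$), and then pulls it back under the Kummer covering $(x_0,x_1,x_2)=(y_0^q,y_1^q,y_2^q)$ in generic projective coordinates, as in the constructions of Ivinskis and Hirano \cite{Iv,HA}; the $q^2$ singular points are the preimages of the one singular point and lie at the pairwise intersections of two unions of $q$ concurrent lines --- a highly non-generic configuration. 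Your starting point --- two \emph{generic} curves $P,Q$ of degree $q$ and a curve of degree $d=q(k+s)$ with an $A_{kl+s-1}$ point at each of their $q^2$ transversal intersection points --- is never made concrete, and a degree count shows it cannot be realized by any Zariski-type equation of the kind you invoke: writing $A_{kl+s-1}$ as $v^2+u^{kl+s}$, placing the contact order $kl+s$ along a member of a degree-$q$ pencil forces $d\ge q(kl+s)>q(k+s)$ for $l\ge2$. The polynomials of \cite[Theorems 5 and 6]{GLS0}, to which you defer, are not of your form; the essential idea you are missing is the purely local one --- a degree-$(k+s)$ curve with one $A_{kl+s-1}$ point obtained by deforming the non-reduced curve $(y+y^l-x^l)^2\cdot(\mbox{unit})$ --- and the hypothesis $k>2l-s$ enters in verifying that germ, not in positioning $P$ and $Q$.

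Because the configuration is different, your non-$T$-smoothness mechanism is also aimed at the wrong object. The obstruction $h^1({\mathcal J}_{Z/\PP^2}(d))>0$ here does not come from the points lying on a curve of degree $q$; it comes from the fact that the pullback of the double curve $y+y^l-x^l=0$, which has degree $ql$, meets $C$ only at the $q^2$ singular points and with maximal (B\'ezout) total intersection multiplicity, so that the equisingularity (resp.\ equianalytic) scheme has an unexpectedly large subscheme on that degree-$ql$ curve; this is the curve against which the Castelnuovo-type/residual computation of \cite[Section 4.3.3.2]{GLS3} is performed. Your residual sequence with respect to $P$ uses the wrong auxiliary curve, and you never produce the inequality forcing $h^1\ne0$, nor connect the thresholds $k>4l-s$ and $k\ge l^2+2l$ (resp.\ $l^2+2l+2$) to any estimate --- you explicitly defer the ``technical heart'' back to \cite{GLS0}. (Your interpolated claim that for $2l-s<k\le4l-s$ the component has dimension larger than expected is not part of the statement and is not established by your parameter count.) So while the overall shape --- explicit curve, then cohomological superabundance --- matches the paper, both the construction and the source of the obstruction differ from, and do not reduce to, what you propose.
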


In particular, for $l=2$, $s=0$, we have in part (1)
$$d^2=q^2k^2\quad\text{and}\quad q^2(\mu(A_{2k-1})+1)^2=4q^2k^2,$$
and in part (2)
$$d^2=q^2k^2\quad\text{and}\quad q^2(\mu(D_{2k+1})-1)^2=4q^2k^2.$$
In part (1) the construction is as follows. Take the affine curve
$$(y+y^l-x^l)^2(1+\lambda_1x^{k+s-2l}+\lambda_2x^sy^k+y^{k+s})=0,$$
where $\lambda_1,\lambda_2\in\CC$ are generic. It is easy to check that it is irreducible with the unique singularity $A_{kl+s-1}$ at the origin. Then we take its projective closure, choose a generic projective coordinate system $(x_0,x_1,x_2)$ and apply the transformation $(x_0,x_1,x_2)=(y_0^q,y_1^q,y_2^q)$ (cf. Ivinskis' and Hirano's constructions \cite{Iv,HA}). In part (2), we start with the affine curve
$$x(y+y^l-x_l)^2(1+\lambda_1x^{k+s_2l-1})+\lambda_2x^sy^k+y^{k+s}=0,$$ which is irreducible and has the unique singularity $D_{kl+s+1}$ at the origin. Then similarly take the projective closure and apply the transformation $(x_0,x_1,x_2)=(y_0^q,y_1^q,y_2^q)$ in generic projective coordinates $x_0,x_1,x_2$. For the lack of the $T$-smoothness, we refer to \cite[Section 4.3.3.2]{GLS3}.

\begin{remark}\label{r-nontsmooth}{\em
The first ever (finitely many) examples of reduced equisingular families of expected dimension which are not smooth are due to Luengo \cite{Lu,Lu1}. In particular, one of his examples comes with a curve of degree $9$ with a unique singularity $A_{35}$.
We recover this curve in Theorem \ref{t-AD1}(1) for $l=4$, $s=0$, $q=1$, $k=9$.}
\end{remark}

Another application of the construction methods discussed in Sections \ref{sec:2} and \ref{sec:3} is to find interesting examples of reducible equisingular families. There are several ways to verify that an equisingular family is reducible:
\begin{itemize}\item An explicit computation of the equisingular family. This is available only for very specific situations for relatively small degrees, see, for example, \cite[Theorem 1.1(ii)]{Gou}, where the family $V^{irr}_7(S)$ ($S$ being the analytic type of the singularity $x^5+y^7=0$) was shown to be reducible.
\item Exhibiting two (or more) components of an equisingular family, whose generic members differ from the algebraic-geometric point of view. In the classical example by Zariski \cite[Sections VIII.3 and VIII.5]{Za2}, the family $V^{irr}_6(6\cdot A_2)$ contains (at least) two components: in one of them the generic curve has $6$ cusps on a conic, while on the other one this is not the case.
    \item Exhibiting two (or more) components of the equisingular family, whose generic members are embedded into the plane in a topologically different way. The aforementioned Zariski example is of such kind \cite{Za0}, since the complements to these generic curves in the plane are not homeomorphic (they have different Alexander polynomials and different fundamental groups).
        \item Exhibiting two (or more) components of the equisingular family that have different dimensions, or such that one is reduced (for instance, $T$-smooth) and the other is not.
\end{itemize}
We present here examples of the last kind. In fact, equisingular families with components of dimension higher than the expected one were known for a while: Segre \cite{Segre2} (see also \cite{Ta0}) showed that the dimension of the component of $V^{irr}_{6m}(6m^2\cdot A_2)$ containing the curves $F^3+G^2=0$, $\deg F=2m$, $\deg G=3m$, exceeds the expected dimension by at least
$$\frac{(m-1)(m-2)}{2}>0\quad\text{as long as}\quad m\ge3,$$
Wahl \cite{Wa1} showed that the family $V^{irr}_{104}(3636\cdot A_1,9000\cdot A_2)$ contains a nonreduced component.
The problem is to show that there exists another (say, $T$-smooth) component of the considered equisingular family.

According to \cite[Sections VIII.3 and VIII.5]{Za2}, \cite[Theorem 2.1]{Sh7}, \cite[Proposition 5.4]{GLS5}, \cite[Proposition 1.1]{GLS4} (see also \cite[Examples 4.2.0.9 and 4.2.0.10, Propositions 4.6.10 and 4.6.11]{GLS3}), we have the following statement.

\begin{theorem} \index{examples!non T-smooth}\label{t-red}

(1) Each of the families $V^{irr}_6(6\cdot A_2)$ and $V_{12}(24\cdot A_2)$ has (at least) two distinct components of the expected dimension.

(2) Let $p,d$ be integers satisfying
$$p\ge3,\quad 6p\le d\le12p-\frac{3}{2}-\sqrt{35p^2-15p+\frac{1}{4}}\ .$$
Then the family $V^{irr}_d(6p^2\cdot A_2)$ has components of different dimensions. Moreover, if $d>6p$, then $\pi_1(\PP^2\setminus C)\simeq\Z/d\Z$ for all curves $C\in V^{irr}_d(6p^2\cdot A_2)$.

(3) Let $m\ge9$ Then there exists $k_0=k_0(m)$ such that for any $k\ge k_0$ and any integer $s$ atisfying
$$\frac{k-1}{2}\le s\le k\left(1-\sqrt{\frac{2}{m}}\right)-\frac{3}{2}\ ,$$ the equisingular family
$V^{irr}_{km+s}(k^2\cdot S(m))$ of irreducible plane curves of degree $d=km+s$ with $k^2$ ordinary $m$-fold points (topological type $S(m)$) has components of different dimensions. Moreover, \mbox{$\pi_1(\PP^2\setminus C)\simeq\Z/d\Z$} for all curves \mbox{$C\in V^{irr}_d(k^2\cdot S(m))$}.
\end{theorem}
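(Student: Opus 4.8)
The plan is to prove reducibility of each family by exhibiting two components separated either by dimension (parts (2) and (3)) or, when their dimensions agree, by a finer topological invariant (part (1)), and then to control the fundamental group uniformly along the whole family. The unifying idea is to contrast a \emph{Zariski--Segre component}, on which the singular points are forced onto auxiliary curves and hence impose dependent conditions, with a \emph{$T$-smooth component} of expected dimension furnished by the patchworking constructions of Sections~\ref{sec:2}--\ref{ssec:3.2}.

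First I would build the excess component by the Zariski method of Theorems~\ref{t-sqh} and~\ref{t-nodecusp1}. For part (2) I take curves $AP^2R^2+BQ^2=0$ with $A,B,P,Q,R$ generic of suitably balanced degrees, so that the curve has degree $d$, is irreducible, and carries exactly $6p^2$ cusps on the locus $\{P=Q=0\}$; for $d=6p$ this degenerates to Segre's curve $F^3+G^2=0$ with $\deg F=2p$, $\deg G=3p$. A parameter count for this family, set against the expected dimension $\frac{d(d+3)}{2}-2\cdot 6p^2$, shows that the equisingular germ has \emph{excess} dimension along these curves; positivity of the excess is a quadratic inequality in $d$ whose solution is exactly the stated interval $6p\le d\le 12p-\frac32-\sqrt{35p^2-15p+\frac14}$. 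For part (3) I would run the analogous construction through the cyclic cover $(x_0,x_1,x_2)\mapsto(y_0^k,y_1^k,y_2^k)$ of Ivinskis--Hirano type, producing $k^2$ ordinary $m$-fold points, with the upper bound $s\le k\bigl(1-\sqrt{2/m}\bigr)-\frac32$ calibrated to keep the excess positive.

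Next I would produce the competing expected-dimension component. The lower bounds $d\ge 6p$ (resp.\ $s\ge\frac{k-1}{2}$) are precisely what keeps the number of singularities below the patchworking threshold: for part (2) one checks $2\cdot 6p^2\le\frac{d^2-4d+6}{2}$, so the complex patchworking construction under~(\ref{e32}) yields an irreducible curve with $6p^2$ cusps lying in a $T$-smooth family, hence of exactly the expected dimension; for part (3) the bound $s\ge\frac{k-1}{2}$ makes (\ref{eq:ordinary2}) of Theorem~\ref{t-ordinary1} satisfiable, giving a $T$-smooth component with $k^2$ ordinary $m$-fold points. Since the Zariski--Segre component is strictly larger, the two are distinct, proving the dimension assertions of (2) and (3). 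In part (1) the excess $(p-1)(p-2)/2$ vanishes at $d=6,12$, so both components have the expected dimension; here I instead invoke Zariski's classical analysis of the six-cuspidal sextic, whose cusps lie on a conic for $F^3+G^2=0$ but not for the patchworked curve, the two being separated by their Alexander polynomial and hence by $\pi_1(\PP^2\setminus C)$.

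Finally, for the \emph{moreover} clauses I would establish $\pi_1(\PP^2\setminus C)\simeq\Z/d\Z$ for every $C$ in the family. In part (2) the extra hypothesis $d>6p$ is exactly the Nori-type bound $6k=36p^2<d^2$, under which Nori's theorem forces $\pi_1(\PP^2\setminus C)$ to be abelian; the abelianization of the complement of an irreducible plane curve of degree $d$ being cyclic of order $d$, this gives $\pi_1\simeq\Z/d\Z$, and the same weighted-count argument (built into the upper bound on $s$) settles part (3). The main obstacle I anticipate is twofold: making the dimension estimate for the Zariski--Segre family sharp enough to recover the exact square-root thresholds, where the auxiliary degrees must be optimized and the dependence among the cuspidal conditions quantified; and verifying the Nori abelianity \emph{uniformly} over the entire family, not merely at the special curves, so that a cyclic fundamental group genuinely coexists with an excess-dimensional Segre locus inside one and the same $V^{irr}_d$.
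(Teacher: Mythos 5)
Your overall strategy --- an excess-dimensional component built from Zariski--Segre torus-type curves played off against a $T$-smooth component of expected dimension obtained by patchworking, with the fundamental-group statement coming from Nori's theorem via the numerical condition $6\cdot 6p^2<d^2$ (equivalent to $d>6p$) --- is exactly the route the paper takes for part (2), and your calibration of the square-root threshold by a parameter count of the family $HF^3+H'G^2$ against the expected dimension is the intended argument. Two of your steps, however, have genuine gaps. First, in part (1) you only treat $V^{irr}_6(6\cdot A_2)$; for $V_{12}(24\cdot A_2)$ the excess vanishes, so a dimension comparison is unavailable, and your proposed discriminant (the Alexander polynomial) is only justified by Zariski's analysis for the sextic. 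The paper instead separates the two components of $V_{12}(24\cdot A_2)$ by the geometric condition that the $24$ cusps lie, or do not lie, on a plane quartic: the Segre curves $F^3+G^2=0$ ($\deg F=4$, $\deg G=6$) fill a $42$-dimensional family whose cusps sit on the quartic $F=0$, while the second component is produced by taking a $T$-smooth curve with $28$ cusps (which exists by Theorem \ref{t-nodecusp}(1), since $28\le\frac{144-36+4}{4}$; note that $2\cdot 28$ exceeds the bound (\ref{e32})), smoothing out four cusps, and checking that some $24$-tuple of the remaining cusps lies on no quartic. Without this, or a proof that the patchworked curve has trivial Alexander polynomial, your argument does not distinguish the two degree-$12$ components.

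Second, your construction for part (3) does not work as stated. The Ivinskis--Hirano substitution $(x_0,x_1,x_2)\mapsto(y_0^k,y_1^k,y_2^k)$ multiplies the degree by $k$, so it cannot produce curves of degree $km+s$ with $s$ in the stated range, and, more importantly, it yields only a low-dimensional set of curves (parametrized by the base curve and coordinates), which gives no lower bound on the dimension of the component exceeding the expected one. The paper's excess family is $\sum_{i=0}^m R_s^{(i)}F_k^iG_k^{m-i}=0$ with $\deg F_k=\deg G_k=k$ and $\deg R_s^{(i)}=s$: the $m+1$ free polynomials $R_s^{(i)}$ contribute roughly $(m+1)\binom{s+2}{2}$ parameters, and it is precisely this count, compared with $\frac{d(d+3)}{2}-k^2\frac{m(m+1)}{2}$, that produces the upper bound $s\le k\bigl(1-\sqrt{2/m}\bigr)-\frac{3}{2}$. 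A smaller point: your formula $AP^2R^2+BQ^2$ (borrowed from Theorem \ref{t-nodecusp1}) produces nodes, not cusps, at $P\cap Q$, and extra nodes at $R\cap Q$; for part (2) the correct form is $HF^3+H'G^2$ with $\deg F=2p$, $\deg G=3p$, $\deg H=\deg H'=d-6p$, which has the $6p^2$ cusps at $F=G=0$ as its only singularities. Finally, your worry about verifying Nori's bound ``uniformly'' is unnecessary: Nori's theorem is a statement about each individual irreducible curve satisfying the numerical hypothesis, so it applies to every member of $V^{irr}_d(6p^2\cdot A_2)$ at once.
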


In part (1) the former family is the classical Zariski's example discussed above. The latter family $V^{irr}_{12}(24\cdot A_2)$ contains a component of expected dimension $42$ formed by the curves given by
$$F^3+G^2=0,\quad\deg F=4,\quad\deg G=6,$$ whose $24$ cusps lie on a plane quartic curve. However, according to Theorem \ref{t-nodecusp}(1) there exists a $T$-smooth component of the family $V^{irr}_{12}(28\cdot A_2)$. Smoothing out any four cusps of a curve $C\in V^{irr}_{12}(28\cdot A_2)$, one obtains curves in $T$-smooth components of $V^{irr}_{12}(24\cdot A_2)$, and it is not difficult to verify that there is a $24$-tuple of cusps of $C\in V^{irr}_{12}(28\cdot A_2)$ which does not lie on any plane quartic curve.

In parts (2), resp. (3), one obtains components of the equisingular families of dimension above the expected one formed by the Zariski type curves (cf. Theorem \ref{t-squ1}) given by
$C_{2p}^2C'_{d-6p}+C_{3p}^2C''_{d-6p}=0$, where $$\deg C_{2p}=2p,\ \deg C_{3p}=3p,\ \deg C'_{d-6p}=\deg C''_{d-6p}=d-6p,$$
resp.
$\sum_{i=0}^mR_s^{(i)}F_k^iG_k^{m-k}=0$, where $$\deg F_k=\deg G_k=k,\ \deg R_s^{(i)}=s,\ i=0,...,m.$$
However, in both the cases there exists a $T$-smooth component (of expected dimension): in part (2) by Theorem \ref{t-nodecusp}(i), in part (3) this can be derived by means of the Alexander-Hirschowitz theorem \cite[Theorem 1.1]{AH1} (see also \cite[Theorem 3.4.22]{GLS3}).
The fact that the fundamental group of the complement to the considered curves is always abelian follows from Nori's theorem \cite[Proposition 3.27]{Nor}.

\subsection{Curves on other algebraic surfaces}

For other algebraic surfaces than $\PP^2$, we consider only the case of nodal curves, which is the most important one, since it is directly related to the vanishing/nonvanishing of Gromov-Witten invariants.

In the following cases we know complete answers in which the equisingular family is $T$-smooth \cite{XiChen,XiChen1,CC,GLS2,Ta2} (see also \cite[Section 4.5.6.3]{GLS3}).

\begin{theorem}\label{t-others} \index{existence!of nodal curves!on surface}
(1) Let $\Sigma$ be a toric surface associated with the planar nondegenerate lattice polygon $\Delta$, ${\mathcal L}(\Delta)$ the corresponding tautological line bundle. Then the inequality
$$0\le n\le\#(\Int(\Delta)\cap\Z^2)$$
is necessary and sufficient for the existence of an irreducible curve with $n$ nodes (as its only singularities) in the linear system $|{\mathcal L}(\Delta)|$.

(2) Let $\Sigma=\PP^2_k$, $1\le k\le 9$, be the plane blown up at $k$ distinct generic points, $D$ an effective divisor class of type $D=dL-d_1E_1-...-d_kE_k$, where $L$ is the lift of a general line on $\PP^2$, $E_1,...,E_k$ are exceptional divisors, $d\ge d_1\ge...\ge d_k>0$. Suppose that $-DK_\Sigma>0$. An irreducible curve $C\in|D|$ with $n$ nodes as its only singularities exists if and only if either
$$k=1,\quad 0\le n\le p_a(D)=\frac{D^2+DK_\Sigma}{2}+1,$$
or
$$k=2,\quad 0\le n\le p_a(D),\quad \begin{cases}&\text{either}\ d\ge d_1+d_2,\\
&\text{or}\ d=d_1=d_2=1,\end{cases}$$
or
$$k\ge3,\quad D^2>0,\quad 0\le n\le p_a(D).$$

(3) For any $g\ge3$, given a general smooth K3 surface $\Sigma$ of the principal series in $\PP^g$, and $m>0$ and $n$ satisfying
$$0\le n\le \dim|{\mathcal O}_\Sigma(m)|,$$ there exists an irreducible curve in the linear system $|{\mathcal O}_\Sigma(m)|$ with $n$ nodes as its only singularities.

(4) Let $\Sigma\subset\PP^3$ be a generic smooth surface of degree $d\ge5$. Then, for all
$$m\ge d,\quad 0\le n\le\dim|{\mathcal O}_\Sigma(m)|$$ there exists an irreducible curve in the linear system $|{\mathcal O}_\Sigma(m)|$ having $n$ nodes as its only singularities.
\end{theorem}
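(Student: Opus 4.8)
The plan is to treat all four parts through the common two-step strategy that already governs the $\PP^2$ case: first establish the necessity of the stated bound from the genus formula, and then produce, for the \emph{maximal} admissible value of $n$, a single irreducible nodal curve lying in a $T$-smooth equisingular family; existence for every smaller $n$ then follows by smoothing out a prescribed subset of nodes, exactly as in Remark \ref{r-tsmoothness} and the proof of Theorem \ref{t-Severi}. For necessity in parts (1) and (2), an irreducible curve $C\in|D|$ with $n$ nodes has geometric genus $g(C)=p_a(D)-n\ge0$ by (\ref{geomgenus}); on the toric surface $\Sigma_\Delta$ the adjunction formula gives $p_a(\mathcal{L}(\Delta))=\#(\Int(\Delta)\cap\Z^2)$, which is the bound in (1), while on $\PP^2_k$ adjunction gives $p_a(D)=(D^2+DK_\Sigma)/2+1$, the bound in (2). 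On a K3 surface $K_\Sigma=0$ forces $p_a=\dim|\mathcal{O}_\Sigma(m)|$ by Riemann--Roch, so the genus bound and the dimension bound coincide, which explains the form of the bound in (3).

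For sufficiency in the toric case (1) I would apply the patchworking Theorem \ref{t-patchworking} directly: choose a convex near-unimodular subdivision of $\Delta$ arranged so that each prescribed node sits in its own small lattice triangle carrying a nodal truncation, verify that the number of available interior lattice points is exactly $\#(\Int(\Delta)\cap\Z^2)$, and check $\mathcal{S}$-transversality of every patch as in Theorem \ref{t-realnode}. For the del Pezzo-type surfaces in (2), the hypothesis $-DK_\Sigma>0$ is precisely what is needed to invoke the cohomology-vanishing and $T$-smoothness machinery of \cite{GK} (its Theorem 6.1(iii)) on the blown-up surface: one degenerates $D$ into a union of exceptional curves, lines, and anticanonical members carrying the requisite nodes, checks that each component $C'$ satisfies $-C'K_\Sigma>0$ so that the nodes impose independent conditions, and smooths. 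The bookkeeping that the maximal node count reaches $p_a(D)$ under the three numerical regimes on $d,d_1,\dots,d_k$ is the routine part.

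The genuinely hard parts are (3) and (4), where $\Sigma$ is neither toric nor rational, so patchworking of the \emph{ambient} curve is unavailable and one must instead degenerate the \emph{surface}. For the K3 case I would place $\Sigma$ in a one-parameter family degenerating to a union of rational surfaces (a Type II or III Kulikov degeneration), build a limiting nodal curve component by component on the central fibre, and then simultaneously smooth the surface and the curve while keeping all $n$ nodes; the crucial input --- the existence of an irreducible rational (i.e.\ $g=0$, maximally nodal) member of $|\mathcal{O}_\Sigma(m)|$ on a \emph{general} K3 --- is Chen's theorem \cite{XiChen,XiChen1}. For the degree-$d$ surface in (4) the canonical class $K_\Sigma=(d-4)H$ is now positive, so the $-C'K_\Sigma>0$ shortcut fails entirely; here one degenerates $\Sigma$ to a reducible or otherwise special surface in $\PP^3$ on which nodal curves in $|\mathcal{O}(m)|$ can be written explicitly, and transports them to the general $\Sigma$ by a dimension count that uses $m\ge d$ to guarantee that the linear system is base-point free and that the Severi variety has the expected dimension $\dim|\mathcal{O}_\Sigma(m)|-n$.

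I expect the main obstacle to be the $T$-smoothness and transversality of the smoothing in parts (3) and (4): because these surfaces carry trivial (K3) or positive (general type) canonical classes, the $H^1$-vanishing arguments that make the $\PP^2$, toric, and del Pezzo cases clean do not transfer, and one has no \emph{a priori} guarantee that the limiting nodal curve deforms with exactly $n$ nodes rather than acquiring worse singularities or shedding components. Overcoming this is exactly the content of Chen's transversality method and the degeneration analyses of \cite{CC,GLS2,Ta2}, and constitutes the technical heart of parts (3) and (4).
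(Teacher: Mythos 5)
Your overall strategy --- necessity from the genus bound, sufficiency by exhibiting a maximally nodal irreducible curve in a family where the nodes can be smoothed independently, and degeneration of the surface itself in parts (3) and (4) --- is essentially the route the paper takes; note that the paper, being a survey, does not prove the theorem but delegates to \cite{XiChen,XiChen1,CC,GLS2,Ta2} and only sketches part (4) in Remark \ref{r-nodalother}. Your description of (4) is compatible with, but vaguer than, that sketch: the actual argument is an induction on $d$ with the quartic K3 of part (3) as base, degenerating the generic $\Sigma_d$ to the union $\Sigma_{d-1}\cup\pi$ of a generic surface of degree $d-1$ and a generic tangent plane $\pi$, and matching a maximally nodal irreducible curve in $|{\mathcal O}_{\Sigma_{d-1}}(m)|$ with a plane nodal curve of degree $m$ in $\pi$ carrying $\frac{1}{2}(m-d+2)(m-d+3)$ nodes; the identity $\dim|{\mathcal O}_{\Sigma_{d-1}}(m)|+\frac{1}{2}(m-d+2)(m-d+3)=\dim|{\mathcal O}_{\Sigma_d}(m)|$ is what makes the node count close, and the transversality of this simultaneous deformation is the technical heart, as you correctly anticipate.

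The one genuine gap is in the necessity direction of part (2). The genus bound only yields $n\le p_a(D)$; it does not explain why, for $k=2$, one must in addition have $d\ge d_1+d_2$ or $d=d_1=d_2=1$, nor why $D^2>0$ is required for $k\ge3$. These side conditions are not ``routine bookkeeping'' of the maximal node count: they are obstructions to the mere existence of an \emph{irreducible} member of $|D|$, arising from irreducible curves $E$ of negative self-intersection (for $k=2$, the $(-1)$-curve $E=L-E_1-E_2$) with $D\cdot E<0$; any curve in $|D|$ must then contain $E$ as a component, hence is reducible unless $D=E$ itself, which is exactly the exceptional case $d=d_1=d_2=1$. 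You need to run this intersection-theoretic argument separately for each $k$; it is carried out in \cite{GLS2}.
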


\begin{remark}\label{r-nodalother}{\em
Part (1) is actually well-known, one can find details in \cite[Theorem 4.5.32]{GLS3}.

Part (2) admits an extension to the generic surfaces $\PP^2_k$, $k>10$, with extra restrictions to the divisor $D$ and the number of nodes $n$ (see \cite[Theorem 5]{GLS2} or \cite[Tearem 4.5.30 and Corollary 4.5.31]{GLS3}).

Part (4) is proved by the method resembling the patchworking construction. Namely, the proof goes by induction with the case $d=4$ (settled in part (3)) as the base. The induction step consists in a pair of deformations:
\begin{itemize}\item the union of a generic surface $\Sigma_{d-1}$ of degree $d-1$ with a generic tangent plane $\pi$ to it deforms in a family into a generic smooth surface $\Sigma_d$ of degree $d$;
\item an inscribed deformation of a curve in the central fiber that consists of an irreducible curve in the linear system $|{\mathcal O}_{\Sigma_{d-1}}(m)|$ on $\Sigma_{d-1}$ having $\dim|{\mathcal O}_{\Sigma_{d-1}}(m)|$ nodes, and of a nodal curve in the plane $\pi$ of degree $m$ having $\frac{1}{2}(m-d+2)(m-d+3)$ nodes.
\end{itemize} Under certain transversality conditions, the above central curve can be deformed into an irreducible curve
$C\in|{\mathcal O}_{\Sigma_d}(m)|$ having
$$\dim|{\mathcal O}_{\Sigma_{d-1}}(m)|+\frac{(m-d+2)(m-d+3)}{2}=\dim|{\mathcal O}_{\Sigma_d}(m)|$$ nodes.

It should be noted that Chiantini and Ciliberto \cite[Section 1]{CC} exhibit examples of superabundant nodal curves on surfaces in $\PP^3$: in particular, for $d\ge20$ and $m=3$ there are curves in $|{\mathcal O}_\Sigma(3)|$ with $n>\dim|{\mathcal O}_\Sigma(3)|$ nodes, and for $d\ge8$ and $m\gg0$ there exists a component of the equisingular family of curves with $n<\dim|{\mathcal O}_\Sigma(m)|$ nodes that has a dimension greater than the expected one.}
\end{remark}

One can obtain some sufficient existence conditions for curves with arbitrary singularities on smooth projective surfaces. For a topological or analytic singularity type $S$, denote by $e(S)$ the minimal degree of a reduced plane curve $C$ having a singular point of type $S$ as its only singularity, belonging to a $T$-smooth equisingular family, which intersects transversally with the space of curves passing through the intersection of $C$ with a generic fixed line. Then the following holds (see \cite[Proposition 4.5.26]{GLS3}).

\begin{lemma}\label{l-other}
Let $\Sigma$ be a smooth projective algebraic surface, $D$ an effective divisor on $\Sigma$, $L$ a very ample divisor on $\Sigma$. Given topological or analytic singularity types $S_1,...,S_r$ and a zero-dimensional scheme $Z\subset\Sigma$ defined in some distinct $r$ points $z_1,...,z_r\in\Sigma$ by the powers ${\mathfrak m}_{z_i}^{e(S_i)}\subset{\mathcal O}_{\Sigma,z_i}$ of the maximal ideals so that
$$H^1(\Sigma,{\mathcal J}_{Z/\Sigma}(D-L))=0\quad\text{and}\quad\max_{1\le i\le r}e(S_i)<L(D-L-K_\Sigma)-1,$$
then there exists an irreducible curve $C\in|{\mathcal O}_\Sigma(D)|$ with $r$ singular points of types $S_1,...,S_r$, respectively, as its only singularities.
\end{lemma}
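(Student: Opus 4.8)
The plan is to build $C$ by a degeneration-and-deformation argument in the spirit of the patchworking construction, governed by \emph{two} cohomology vanishing conditions: the global one $H^1(\Sigma,\mathcal{J}_{Z/\Sigma}(D-L))=0$ supplied by hypothesis, and a local one along a generic member of $|L|$ that is encoded in the numerical inequality. First I would fix the local models. By the very definition of $e(S_i)$, for each $i$ there is a reduced plane curve $C_i^{0}$ of degree $e_i:=e(S_i)$ with a single singular point $p_i$ of type $S_i$, lying in a $T$-smooth equisingular family which meets transversally the linear subsystem of degree-$e_i$ curves passing through $C_i^{0}\cap\ell_i$ for a generic line $\ell_i$. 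I record the germ $(C_i^{0},p_i)$ together with the contact scheme $C_i^{0}\cap\ell_i$ of $e_i$ points on $\ell_i$.

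Next I would transplant these models onto $\Sigma$. Since $L$ is very ample, a generic member $\Lambda\in|L|$ is smooth and irreducible by Bertini; as the hypotheses concern a scheme supported at points we may take in general position (the vanishing being generic in their position by semicontinuity), I place $z_1,\dots,z_r$ on such a $\Lambda$ and choose local coordinates near each $z_i$ in which $\Lambda$ is a coordinate curve, so that $\Lambda$ plays the role of the generic line $\ell_i$ of the model. The global vanishing $H^1(\Sigma,\mathcal{J}_{Z/\Sigma}(D-L))=0$ makes the restriction $H^0(\Sigma,\mathcal{O}_\Sigma(D-L))\to H^0(Z,\mathcal{O}_Z)$ surjective, so I can pick $B\in|D-L|$ whose $(e_i-1)$-jet at each $z_i$ matches the germ of the chosen model. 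The reducible curve $C_0:=\Lambda+B\in|\mathcal{O}_\Sigma(D)|$ then reproduces, near each $z_i$, precisely the ``line $\cup$ model-jet'' configuration coming from $(C_i^{0},\ell_i)$.

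The heart of the argument is to deform $C_0$ inside $|\mathcal{O}_\Sigma(D)|$ so that, near each $z_i$, the smooth branch $\Lambda$ and the jet of $B$ merge into a singular point of type exactly $S_i$, with the resulting family $T$-smooth. Here the numerical hypothesis enters: by adjunction $2p_a(\Lambda)-2=L(L+K_\Sigma)$, hence $L(D-L-K_\Sigma)-1=LD-2p_a(\Lambda)+1$, and the inequality $\max_i e_i<L(D-L-K_\Sigma)-1$ says exactly that a contact divisor of degree $e_i+1$ can be subtracted from $\mathcal{O}_\Sigma(D)|_\Lambda$ while keeping its degree above $2p_a(\Lambda)-2$, i.e. that the relevant $H^1$ on the curve $\Lambda$ vanishes. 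This vanishing makes the deformation transverse to $\Lambda$ unobstructed and independent over the points $z_i$; combined with the transversality-to-lines property built into $e(S_i)$, it guarantees that each germ deforms into $(C_i^{0},p_i)$, producing $S_i$ and nothing worse. Concretely this is the transfer to $\Sigma$ of the plane-curve deformation results of \cite[Theorem, page 31]{Sh4} and \cite[Theorem 6.1(iii)]{GK}, applied either directly to the germs along $\Lambda$ or after blowing up the $z_i$.

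Finally, irreducibility and the absence of further singularities come from the same deformation: it smooths the transverse intersection points of $\Lambda$ and $B$ lying away from $z_1,\dots,z_r$, joining the two components into a single irreducible curve, while Bertini applied to the sufficiently positive system $|\mathcal{O}_\Sigma(D)|$ forces smoothness outside the $z_i$. The step I expect to be the main obstacle is the deformation step of the previous paragraph, namely making precise that the degenerate configuration $\Lambda+B$ deforms to yield \emph{exactly} the type $S_i$ in a $T$-smooth manner. This requires carefully matching the local transversality encoded in the definition of $e(S_i)$ with the global freedom supplied by the two vanishing conditions, and it is precisely here that the combination of patchworking ideas with $H^1$-vanishing is indispensable.
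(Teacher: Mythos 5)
Your reading of the numerical hypothesis is correct: $L(D-L-K_\Sigma)-1=\deg\bigl(\mathcal{O}_\Sigma(D)|_\Lambda\bigr)-2p_a(\Lambda)+1$ for $\Lambda\in|L|$, so the inequality is an $H^1$-vanishing statement on a curve in $|L|$ after removing a length-$(e_i+1)$ subscheme, and the ingredients you assemble (plane models of degree $e(S_i)$, jet interpolation from $H^1(\mathcal{J}_{Z/\Sigma}(D-L))=0$, the transversality built into $e(S_i)$) are the right ones. But the degeneration you build around them does not work. You place all of $z_1,\dots,z_r$ on a single member $\Lambda\in|L|$ and set $C_0=\Lambda+B$. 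First, this is usually impossible: $r$ may far exceed $\dim|L|$ (already for $\Sigma=\PP^2$, $L$ a line and $r$ of order $d^2$, the basic situation the lemma must cover). Second, even when it is possible, it is a \emph{specialization} of the points, and semicontinuity of $h^1$ runs the wrong way: the hypothesis gives vanishing at the given (hence at the generic) position, which says nothing about the special position where all points lie on one $\Lambda$. Third, if all the points did lie on one $\Lambda$, the degree count on $\Lambda$ that your deformation step needs would involve $\sum_i(e_i+1)$, whereas the hypothesis only controls $\max_i e_i$; this mismatch signals that the inequality is meant to be used pointwise, with a separate generic $\Lambda_i\in|L|$ through the single point $z_i$, via a Horace-type residual sequence
$$0\to\mathcal{J}_{Z_i'}(D-L)\to\mathcal{J}_{Z'}(D)\to\mathcal{J}_{(Z'\cap\Lambda_i)/\Lambda_i}\bigl(D|_{\Lambda_i}\bigr)\to 0,$$
whose third term has vanishing $H^1$ exactly when $e_i+1<L(D-L-K_\Sigma)$.

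The construction the paper cites (\cite{Sh2000,ST1}, \cite[Section 2.3.5 and Proposition 4.5.26]{GLS3}) is local rather than a global degeneration to $\Lambda+B$: the points stay where they are, the two vanishing statements are combined to make sections of $\mathcal{O}_\Sigma(D)$ surject onto the relevant jets at the $z_i$, and one glues in the rescaled local models $t^{e_i}f_i(x/t,y/t)$, whose limit at $t=0$ is the leading degree-$e_i$ form of $f_i$, i.e.\ the cone over $C_i^0\cap\ell_i$ --- not a ``line plus jet of $B$'' configuration. The transversality in the definition of $e(S_i)$ is precisely the $T$-smoothness of these patchworking patterns and guarantees that the glued curve acquires exactly the singularities $S_i$; irreducibility and the absence of further singularities then come from genericity within the jet-constrained subsystem, not from a bare appeal to Bertini on $|D|$, which as you set it up is not justified. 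So the proposal has a genuine gap at its central step, even though its guiding heuristics point in the right direction.
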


The proof is based on a version of the patchworking construction as it appears in \cite{Sh2000,ST1} (see also \cite[Section 2.3.5]{GLS3}). Some numerical conditions, based on $h^1$-vanishing criteria \cite{Xu}, can be found in \cite[Section 4.5.6.2]{GLS3}.

\subsection{Other related problems}

\noindent{\bf Rational cuspidal curves.} A rational cuspidal curve is a complex rational plane curve homeomorphic to a sphere, equivalently, a  rational plane curve having only irreducible singularities (called (generalized) cusps). They attracted much attention due to their interesting properties and tight links to the Jacobian conjecture, affine algebraic geometry, and birational geometry (see \cite{AM,ZL,
KP1}). The subject definitely deserves a separate full size survey. We only mention one result directly related to the existence problem for singular plane curves \cite[Theorem 1.1]{KP2}:

\begin{theorem}\label{t-kp}
A rational cuspidal curve has at most $4$ singular points.
\end{theorem}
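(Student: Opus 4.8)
The plan is to study not the curve $C$ itself but its complement $U=\PP^2\setminus C$, exploiting the fact that rationality together with the unibranch (cuspidal) nature of the singularities forces $U$ to be an extremely special open surface. First I would record the topological input. By definition $C$ is homeomorphic to the $2$-sphere, so $\chi_{\text{top}}(C)=2$ and hence $\chi_{\text{top}}(U)=\chi_{\text{top}}(\PP^2)-\chi_{\text{top}}(C)=1$; moreover $H_1(U;\Z)\cong\Z/d\Z$ is finite. Thus $U$ is a $\mathbb{Q}$-homology plane: it carries the rational homology of a point. This structural property is what replaces the weaker, degree-dependent restrictions (Hirzebruch--Ivinskis, Langer, the spectral bound) recalled in Section~\ref{sec.arb:restr}; none of those can by itself yield an \emph{absolute} constant bound, whereas the $\mathbb{Q}$-homology plane condition encodes exactly the rigidity coming from $g(C)=0$.

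Next I would pass to the minimal embedded (log) resolution $\pi\colon X\to\PP^2$ with reduced total transform $D=\pi^{-1}(C)_{\mathrm{red}}$ a simple normal crossing divisor, so that $U=X\setminus D$, and stratify the argument by the logarithmic Kodaira dimension $\bar\kappa(U)\in\{-\infty,0,1,2\}$. The cases $\bar\kappa(U)\le 1$ are governed by the classification of $\mathbb{Q}$-homology planes of non-general type: such $U$ admit affine-line or $\CC^*$-fibration structures whose fibres meet $D$ in a tightly controlled way, and a direct inspection of how a fibre can acquire a cuspidal degeneration bounds the number of cusps comfortably below $4$. The substantial case is therefore $\bar\kappa(U)=2$, where $U$ is of log general type.

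For the log general type case the main tool is the logarithmic Bogomolov--Miyaoka--Yau inequality in its orbifold form (the circle of ideas behind Langer's generalization \cite{Lan}). Running the logarithmic minimal model program on the pair $(X,D)$ produces an almost minimal model on which the Zariski decomposition of $K_X+D$ has a positive part $(K_X+D)_+$ satisfying
$$(K_X+D)_+^2\;\le\;3\,\bar e_{\mathrm{orb}}(U),$$
where $\bar e_{\mathrm{orb}}(U)$ is the orbifold Euler number. Because $U$ is a $\mathbb{Q}$-homology plane one has $\chi_{\text{top}}(U)=1$, which pins $\bar e_{\mathrm{orb}}(U)$ down to a small explicit quantity modified only by the local contributions of the cusps, read off from the dual graphs of their resolutions. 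Since $\bar\kappa(U)=2$ forces the left-hand side to be positive while each cusp contributes a definite nonnegative amount to the right-hand side, the inequality collapses to a finite arithmetic constraint on the multiset of cusp types, and summing the local contributions caps their number.

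The hard part is the final step: converting this orbifold inequality into the \emph{sharp} bound $c\le 4$ rather than some larger constant, and in particular excluding every hypothetical configuration of five or more cusps. This requires a delicate case analysis of the admissible cusp singularity types, their Zariski-decomposition contributions, and the constraints coming from $\sum_i\del(C,z_i)=\tfrac{(d-1)(d-2)}{2}$ (the genus formula (\ref{genusformula}) with $g(C)=0$) together with the Pl\"ucker relations (\ref{plucker}) and (\ref{eq:hess}) for the dual curve. Organizing this combinatorics so that no case with $c\ge 5$ survives is where essentially all of the work lies; the preceding reductions are structural, but the exclusion of the borderline configurations is the genuine obstacle.
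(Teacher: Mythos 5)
First, a point of reference: the paper does not prove this theorem at all --- it is quoted from Koras--Palka \cite{KP2} as a statement of record, so there is no in-paper argument to compare against. Your outline does correctly identify the framework that \cite{KP2} actually uses: pass to the complement $U=\PP^2\setminus C$, observe that it is a $\mathbb{Q}$-homology plane with $\chi_{\text{top}}(U)=1$ and finite $H_1$, stratify by the logarithmic Kodaira dimension $\bar\kappa(U)$, and in the log general type case combine a log minimal model program with a logarithmic Bogomolov--Miyaoka--Yau inequality. As a roadmap it points in the right direction.

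However, there is a genuine gap, and it sits exactly where you concede that ``essentially all of the work lies.'' The log BMY inequality applied to the standard pair $(X,D)$, which is what you describe, is known to yield only a bound of roughly $8$ singular points (Tono's theorem; Orevkov and Zaidenberg had earlier non-sharp constants by the same route); it does not ``collapse to'' the bound $4$. The decisive new idea in \cite{KP2} is to run the minimal model program for the half-integral pair $(X,\tfrac{1}{2}D)$ --- Palka's theory of almost minimal models with respect to $K_X+\tfrac{1}{2}D$ --- which extracts much stronger numerical constraints from the local resolution data of the cusps; even with that input, excluding every configuration of five or more cusps occupies the bulk of a long paper. Your sketch contains neither the half-integral MMP nor any concrete mechanism for the terminal case analysis, and the assertion that the cases $\bar\kappa(U)\le 1$ are disposed of ``by direct inspection'' likewise papers over a nontrivial classification of $\mathbb{Q}$-homology planes admitting $\CC^*$- or affine-line fibrations. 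As written, the proposal is a plausible plan in which every quantitative step is deferred, so it does not constitute a proof of the stated bound.
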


There is a series of classification results for rational cuspidal curves (see references in \cite{KP2}).

\medskip
\noindent {\bf Curves in the higher-dimensional projective spaces.} The general irreducible curve of any degree and genus in $\PP^n$, $n\ge3$, is smooth. Thus, singular spacial curves are of a moderate interest. The question on the number of nodes of an irreducible curve in $\PP^n$, $n\ge3$, of degree $d$ and genus $g$ was studied in \cite{Ta-1,Ta1} over the complex field and in \cite{Pe1} over the real field. For $n\ge3$, the genus of an irreducible nondegenerate (i.e., not contained in a hyperplane) curve of degree $d\ge n$ in $\PP^n$ is bounded from above by
$$C(d,n)=\frac{1}{2}m((m-1)(n-1)+2e),\quad\text{where}\ d-1=m(n-1)+e,\ 0\le e<n-1,$$
(see \cite{Ca} or \cite[Page 57]{GH}).

\begin{theorem}\label{t-pecker}
For any $d\ge n\ge3$ and any $\delta\le C(d,n)$, there exists a real irreducible nondegenerate curve of degree $d$ in $\PP^n$ with $\delta$ real nodes as its only singularities.
\end{theorem}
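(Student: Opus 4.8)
The plan is to realize the extremal value $\delta=C(d,n)$ first and then descend to all smaller $\delta$ by smoothing nodes independently, exactly in the spirit of Severi's construction and Remark \ref{r-tsmoothness}. Concretely, I would reduce the theorem to the single assertion that there is a real, irreducible, nondegenerate \emph{rational} curve $C_0\subset\PP^n$ of degree $d$ whose only singularities are $C(d,n)$ real nodes and which lies in a $T$-smooth stratum of the Hilbert scheme of nodal curves. Indeed, the geometric genus of a nondegenerate rational curve of degree $d$ equals its arithmetic genus, which for a Castelnuovo-extremal curve in $\PP^n$ is precisely $C(d,n)$; hence such a curve carries the maximal admissible number of nodes, and the constraint $\delta\le C(d,n)$ is exactly the condition that the geometric genus stay nonnegative. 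Once $C_0$ is known to sit in a $T$-smooth family, a deformation smoothing out any prescribed subset of $C(d,n)-\delta$ of its nodes, while preserving the remaining ones and the real structure, yields the required curve for every $\delta\le C(d,n)$.

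For the construction of $C_0$ I would work on a real surface of minimal degree, namely a real rational normal scroll $S\subset\PP^n$ of degree $n-1$, that is, $S=\PP(\ko_{\PP^1}(a)\oplus\ko_{\PP^1}(b))$ with $a+b=n-1$, embedded by its tautological linear system; $S$ is a real rational $\PP^1$-bundle over $\PP^1$ with $\Pic(S)=\Z C_0'\oplus\Z F$, $F$ a fibre and $C_0'$ a section. Choosing on $S$ a bisection class $D$ whose smooth members are the Castelnuovo-extremal curves of degree $d$ and genus $C(d,n)$, I would take inside $|D|$ a real rational member whose only singularities are $C(d,n)$ real nodes; because $S$ is nondegenerate of minimal degree and $D$ meets a general fibre in two points, the resulting curve is irreducible and nondegenerate in $\PP^n$. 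The existence of such a maximally nodal rational member, with all nodes real and no worse degeneration, follows from the theory of nodal curves on a rational surface (compare Theorem \ref{t-others}(1),(2)) carried out equivariantly for the real structure, which is precisely where Pecker's explicit real rational parametrizations \cite{Pe1} do the work.

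The $T$-smoothness at $C_0$ is where rationality is decisive. The normalization of $C_0$ is $\PP^1$, and the deformation theory of the corresponding morphism $\varphi\colon\PP^1\to\PP^n$ is governed by $H^1(\PP^1,\varphi^*T_{\PP^n})$; via the Euler sequence $\varphi^*T_{\PP^n}$ is a quotient of $\ko_{\PP^1}(d)^{\oplus(n+1)}$, hence globally generated, so all its line-bundle summands have nonnegative degree and $H^1(\PP^1,\varphi^*T_{\PP^n})=0$. Thus the map is unobstructed, the nodes impose independent conditions, and the stratum of $\delta$-nodal curves is smooth of the expected dimension; each node can therefore be smoothed or preserved independently by a deformation of $\varphi$, and performing all deformations equivariantly under complex conjugation keeps the curve real while removing any chosen $C(d,n)-\delta$ of the real nodes. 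I expect the genuinely hard step to be the construction in the second paragraph \emph{over the reals}: producing the extremal rational curve with \emph{all} $C(d,n)$ nodes real and with no tangential or higher collisions (no cusps, tacnodes, triple points, or trisecant degenerations), i.e.\ controlling simultaneously the reality and the transversality of all $C(d,n)$ self-intersections. Over $\CC$ this is a dimension count on the scroll, but over $\R$ it requires the explicit real parametrizations of \cite{Pe1} and constitutes the technical heart of the proof.
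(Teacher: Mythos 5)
Your two-step outline --- realize the extremal case by a real rational curve with $C(d,n)$ real nodes, then smooth any prescribed subset of them using unobstructedness of maps $\PP^1\to\PP^n$ --- is exactly the paper's strategy, and the smoothing step is essentially the standard argument the paper delegates to \cite{Ta1}. The genuine gap is in your second paragraph, which is where the entire content of the theorem sits. You assert that a real rational member of the extremal linear system on a real scroll, with \emph{all} $C(d,n)$ of its nodes real and no worse degenerations, ``follows from the theory of nodal curves on a rational surface carried out equivariantly for the real structure.'' No such off-the-shelf result exists: the complex existence theorems for nodal curves on rational surfaces (Theorem \ref{t-others}) give no control whatsoever over how many nodes of a real member are real, and controlling the real type of nodes is precisely the difficulty this survey emphasizes for plane curves before Theorem \ref{t-realnode} (generic real degeneration-and-smoothing arguments fail to produce prescribed real node types). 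You half-acknowledge this by saying that Pecker's parametrizations ``do the work,'' but that turns the proof of the key step into a citation rather than an argument. The paper's actual route is concrete: Pecker \cite{Pe1} writes down a real plane rational curve with $C(d,n)$ real nodes in the affine plane and pushes it into $\PP^n$ by the monomial map $\psi(x,y)=(x,x^2,\dots,x^{n-k},y,yx,\dots,yx^{k-1})$; the image is a surface of minimal degree, so your scroll picture is the right ambient geometry, but the reality and transversality of all the nodes are verified on the explicit parametrization, not deduced from deformation theory on the scroll.

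Two secondary inaccuracies. First, the Castelnuovo-extremal curves on the scroll are not bisections: they meet the general fibre in $m=\left[(d-1)/(n-1)\right]$ points, so your class $D$ is an $m$-section, and for $d$ large relative to $n$ this is far from $2$. Second, $H^1(\PP^1,\varphi^*T_{\PP^n})=0$ gives smoothness of the space of maps at $\varphi$, but it does not by itself show that the $\delta$ conditions ``preserve the $i$-th node,'' each of codimension $n-2$ in that space when $n\ge3$, are independent; one needs in addition a vanishing for the normal sheaf twisted down at the preimages of the nodes, or the argument of \cite{Ta1}, to conclude that the nodes can be smoothed independently while staying real.
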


For the proof, Pecker \cite{Pe1} constructs a suitable real plane rational curve with $C(d,n)$ real nodes in the affine plane, then maps it by
$$\psi(x,y)=(x,x^2,...,x^{n-k},y,yx,yx^2,...,yx^{k-1}),\quad k=n-\left[\frac{d-1}{m+1}\right],$$ to $\PP^n$ with the image of degree $d$. It is not difficult to see that prescribed nodes of the obtained curve can be smoothed out (cf. also \cite{Ta1}).

\medskip\noindent{\bf Deformations of plane curves singularities.} A local version of the problems discussed in this survey is the following local adjacency problem:

\smallskip{\it Given a reduced plane curve singular germ $(C,p)$, what collections of singularities can appear in its (versal) deformation?}

\smallskip The question on the existence of a global plane curve of a given degree $d$ with prescribed singularities can be considered as the above local deformation question for an ordinary $d$-fold singular point.

We show only two specific examples, both over the real field and both concerning the nodal deformations of arbitrary real plane curve singular points.

The first result is due to Pecker \cite{Pe2}. Recall that the maximal number of nodes appearing in a deformation of a plane curve singularity $(C,p)$ equals $\delta(C,p)$, which in case of an irreducible (i.e., unibranch) germ $(C,p)$ can be written as $\frac{1}{2}\mu(C,p)$.

\begin{theorem}\label{t=pecker} \index{deformation!real elliptic}
Given an irreducible real plane curve singularity $(C,p)$ and any nonnegative $a\le\frac{1}{2}\mu(C,p)$. Then there exists a real deformation of $(C,p)$, whose general member has $a$ real elliptic nodes as its only singularities.
\end{theorem}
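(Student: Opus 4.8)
The plan is to realize all these nodal deformations through the normalization. Since $(C,p)$ is irreducible (unibranch) and real, conjugation fixes its unique branch, so the branch is itself real and admits a real analytic parametrization $\phi(t)=(x(t),y(t))$ with $x,y\in\R\{t\}$, generically injective, whose complexification $\phi_\CC\colon(\CC,0)\to(\CC^2,0)$ carries exactly $\delta(C,p)=\tfrac12\mu(C,p)$ virtual double points. The decisive observation is a reality dichotomy for the double points of a \emph{real} deformation $\phi_s$ of $\phi$: a double point $\phi_s(t_1)=\phi_s(t_2)$ with $t_1\ne t_2$ is \emph{hyperbolic} when $t_1,t_2\in\R$, and \emph{elliptic} exactly when $t_2=\bar t_1\notin\R$ --- in the latter case the image point $\phi_s(t_1)=\phi_s(\bar t_1)=\overline{\phi_s(t_1)}$ is automatically real and the two branches through it are complex conjugate, i.e. the local model is $x^2+y^2=0$. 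All remaining double points are genuinely non-real and occur in conjugate pairs. Thus the whole problem reduces to controlling the \emph{type} and \emph{number} of double points of a real perturbation of $\phi$.

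First I would reduce to the extremal case $a=\delta$. It suffices to produce a single real deformation of $(C,p)$ whose general member has $\delta$ elliptic nodes and no other singularities; the intermediate values $a<\delta$ then come for free. Indeed, each elliptic node is an $A_1$-singularity, and in the real semiuniversal deformation of $(C,p)$ the curve $C'$ carrying the $\delta$ elliptic nodes appears; by openness of versality the restriction of that deformation near $[C']$ is a real semiuniversal deformation of the multigerm $\delta\cdot A_1$, so every partial-smoothing pattern is realized by nearby real fibers. Smoothing $\delta-a$ of the nodes (replacing the local model $x^2+y^2$ by $x^2+y^2-\eps$ with $\eps<0$, which kills the node over $\CC$ while staying real) and keeping the other $a$ untouched produces a real fiber of a deformation of $(C,p)$ with exactly $a$ real elliptic nodes as its only singularities.

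For the extremal construction I would deform $\phi$ inside a real family $\phi_s$, $s\in(\R,0)$, so that for small $s\ne0$ the real arc $\phi_s(\R)$ is embedded (killing all hyperbolic double points) while all $\delta$ complex double points sit on the antidiagonal $t_2=\bar t_1$. The quasihomogeneous model shows the mechanism: for $y^2=x^{2k+1}$ (where $\delta=k$) one takes
\[
y^2=(x+c_0)\prod_{i=1}^{k}(x+c_i)^2,\qquad 0<c_0<c_1<\dots<c_k,
\]
with all $c_i\to0$; at each double root $x=-c_j$ the local equation is $y^2\approx\bigl[(c_0-c_j)\prod_{i\ne j}(c_i-c_j)^2\bigr](x+c_j)^2$, and since $c_0$ is the smallest the bracket is negative, giving precisely $k=\delta$ elliptic nodes and no real crossings. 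The general germ should be treated by perturbing the parametrization one Puiseux/semigroup level at a time, at each level splitting off the corresponding elliptic double points by the same sign-cascade, and invoking conservation of the total double-point number ($\equiv\delta$ throughout a deformation of the parametrization) to certify that no additional non-real nodes are created.

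The \textbf{main obstacle} is exactly this last reality bookkeeping for arbitrary Puiseux data: arranging one real deformation that makes all $\delta$ double points simultaneously conjugate-type --- none hyperbolic and, more delicately, none forming conjugate pairs of genuinely non-real nodes. This is where Pecker's explicit real parametrizations of the perturbed branch carry the argument. By contrast, the double-point count, the reality dichotomy, and the final independent-smoothing step are routine once the extremal all-elliptic deformation is in hand.
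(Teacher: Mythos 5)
Your proposal follows essentially the same route as the paper: reduce to the extremal case $a=\delta(C,p)=\frac{1}{2}\mu(C,p)$ by openness of versality and independent real smoothing of the nodes, and then obtain the all-elliptic deformation from Pecker's explicit deformation of the real parametrization. Your reality dichotomy for double points and the worked quasihomogeneous model are consistent with that construction, and, like the paper, you correctly defer the general Puiseux bookkeeping to Pecker's explicit parametrizations.
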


Due to the openness of versality (see, for instance, \cite[Theorem I.1.15]{GLS-ISD}), given a deformation of a singularity $(C,p)$ with a singular general member, there exists a deformation of $(C,p)$ in which the singularities of that general member can independently be deformed in a prescribed way. That is, to prove the theorem it is enough to find a deformation realizing $a=\frac{1}{2}\mu(C,p)$ elliptic nodes. For the latter deformation, Pecker explicitly constructs a deformation of the parametrization of $(C,p)$.

An  in a sense opposite question is to find a deformation with the maximal possible number of hyperbolic nodes. Such deformations are called {\em morsifications}, and they carry out an important information on the topology of the singularity $(C,p)$ \cite{AC,GZ} (see also \cite{FPST} for the relation of morsifications to mutations of quivers). A'Campo and Gusein-Zade \cite{AC,GZ} proved the following claim.

\begin{theorem}\label{t-morsification}\index{morsification}
Every totally real plane curve singularity (i.e., a real plane curve singularity $(C,p)$ whose all local branches are real) possesses a morsification, and each morsification exhibits $\delta(C,p)$ hyperbolic nodes.
\end{theorem}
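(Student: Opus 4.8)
The plan is to prove two things separately: that the maximal number of nodes in any real deformation of $(C,p)$ is $\delta(C,p)$ (the upper bound, so that any morsification has \emph{at most} $\delta(C,p)$ nodes), and that for a totally real germ this bound is attained by a deformation whose nodes are \emph{all} hyperbolic. The upper bound is precisely the fact recalled just before the statement: a node has $\delta$-invariant $1$ and, by conservation of the $\delta$-invariant, a nearby fibre can carry in total at most $\delta(C,p)$ units of $\delta$, so no deformation produces more than $\delta(C,p)$ nodes. Hence it remains to construct a single real deformation realizing $\delta(C,p)$ pairwise distinct hyperbolic nodes; the openness of versality, used already in the proof of Theorem~\ref{t=pecker}, then upgrades such an example into a genuine morsification and shows that every morsification attains precisely this number.

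For the construction I would exploit the additivity of the $\delta$-invariant over the branches. Writing $(C,p)=\bigcup_{i=1}^r(B_i,p)$ with $B_1,\dots,B_r$ the branches, all real by hypothesis, one has
$$\delta(C,p)=\sum_{i=1}^r\delta(B_i,p)+\sum_{1\le i<j\le r}(B_i\cdot B_j)_p,$$
and for a unibranch germ $\delta(B_i,p)=\tfrac12\mu(B_i,p)$. The strategy is therefore to morsify each branch separately and then to superpose the branches in real general position. First I would treat a single branch, given by a real Puiseux parametrization $\phi(t)=(t^{n},\,\lambda t^{m}+\cdots)$: following A'Campo and Gusein-Zade, deform $\phi$ within real parametrizations to a map $\phi_s$ which, for small $s\neq0$, is a generic immersion of $(\R,0)$ whose image has exactly $\delta(B_i,p)$ transverse self-crossings, all of them real. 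Concretely one builds the deformation from Chebyshev-type polynomials so that the immersed real arc realizes the ``divide'' of the branch, whose double points are in bijection with a count giving $\tfrac12\mu(B_i,p)=\delta(B_i,p)$. Each such double point is a transverse crossing of two real arcs of the same branch, that is, a hyperbolic node $x^2-y^2=0$ of the deformed curve.

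Second, I would place the $r$ morsified branches in real general position so that, for $i\neq j$, the deformed branches meet in exactly $(B_i\cdot B_j)_p$ transverse real points near $p$; for real branches a suitable real perturbation splits each tangency into that many \emph{real} transverse intersections (for instance $y=0$ and $y=\prod_k(x-a_k)$ with distinct real $a_k$ models the local case of intersection multiplicity $m$). The product $f_s=\prod_i f_{i,s}$ of the deformed branch equations is then a real curve germ whose only singularities are
$$\sum_{i=1}^r\delta(B_i,p)+\sum_{1\le i<j\le r}(B_i\cdot B_j)_p=\delta(C,p)$$
hyperbolic nodes, which is the required morsification.

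The main obstacle is the \emph{reality} of the full node count, not its complex bookkeeping. Over $\CC$ the additivity formula already yields $\delta(C,p)$ crossings, but a careless real deformation would let self-crossings degenerate, or let mutual intersection points escape into complex-conjugate pairs; the delicate point is to choose the branch morsifications and their relative position so that \emph{all} $\delta(C,p)$ crossings remain real and simple simultaneously. This is exactly where total-realness of $(C,p)$ is indispensable: it is what allows the divide of each branch and the inter-branch intersections to be realized by honest real transverse double points, and organizing these real pictures coherently is the technical heart of the A'Campo--Gusein-Zade argument.
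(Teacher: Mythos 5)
Your reduction (the upper bound $\delta(C,p)$ from conservation of the $\delta$-invariant, one explicit real deformation with $\delta(C,p)$ hyperbolic nodes, and openness of versality to conclude) is the right skeleton, and the single-branch step --- a real deformation of the Puiseux parametrization to a generic immersion with $\delta(B_i,p)=\tfrac12\mu(B_i,p)$ real transverse self-crossings --- is correct and is exactly Gusein-Zade's treatment of irreducible germs. The gap is the superposition step. You claim that the $r$ independently morsified branches can be put in ``real general position'' so that each pair meets in $(B_i\cdot B_j)_p$ real transverse points, and you support this only with the model of two \emph{smooth} branches ($y=0$ against $y=\prod_k(x-a_k)$). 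For singular branches this is not a general-position statement: the number of real mutual intersections depends on how the internal structure of each branch's morsification is correlated with its placement, and a generic placement can land in the wrong chamber. Concretely, take $B_1=\{y^2=x^3\}$ and $B_2=\{y^2=-x^3\}$, so that $(B_1\cdot B_2)_0=6$; morsify each in the standard way and translate the second, i.e.\ take $y^2=x^2(x+\eps)$ and $(y-\delta)^2=x^2(\eps-x)$. Eliminating $y$ reduces the mutual intersections to $u^3-\eps\delta^2u+\delta^4/4=0$ with $u=x^2$; the roots sum to zero, so at most two are positive and at most $4$ of the $6$ intersection points are real. Hence ``general position'' does not suffice: the individual morsifications and their mutual placement must be chosen simultaneously and coherently, which is precisely the content of the theorem and precisely what your write-up defers (to references that do not argue by superposition) rather than proves. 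Note also that the restriction to product deformations $f_s=\prod_i f_{i,s}$ is itself an extra constraint the cited constructions do not impose.

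The paper's proof (following A'Campo and Gusein-Zade) uses a different decomposition that avoids this difficulty: induction on the embedded resolution, combined with explicit local formulas. One blows up $p$, adjoins the (real) exceptional curve, applies the inductive hypothesis to the totally real configuration formed by the strict transform together with the exceptional divisor --- whose branches have strictly smaller pairwise intersection multiplicities --- and then blows down, converting the order-$\mt(C,p)$ contact with the exceptional curve into real hyperbolic nodes by an explicit formula (the quintic of Figure \ref{fig-quintic}(a) is such an explicit morsification of $y^4-2x^5=0$). The blow-ups separate the branches \emph{before} any morsification is performed, which is exactly the control your superposition lacks. To salvage your architecture you would need a genuine real gluing lemma for pairs of singular real branches, and I do not see how to prove one without in effect redoing the blow-up induction.
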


\noindent The proof is a combination of the blow-up induction and certain explicit formulas (the quintic shown in Figure \ref{fig-quintic}(a) represents, in fact, a morsification of the singularity $y^4-2x^5=0$).
\medskip

The question on the existence of morsifications for real singularities $(C,p)$ having complex conjugate local branches turns to be much harder. A partial answer to this question was suggested in \cite{LS}.

\subsection{Some questions and conjectures}

In principle, every time we presented a partial answer to a specific or general existence problem, we encourage the reader to improve or even complete the answer. However, several questions deserve a more detailed comment.

\medskip\noindent{\bf Cuspidal plane curves.} In Section \ref{ssec:2.2} we discussed one of the most challenging questions: what is the maximal number $k_{\max}(d)$ of ordinary cusps of a plane curve of degree $d$? Langer \cite{Lan} conjectures that the coefficient of $d^2$ in the right-hand side of (\ref{eq:langer}) is sharp. More precisely,

\begin{conjecture}\label{con1}
$$\lim_{d\to\infty}\sup\frac{k_{\max}(d)}{d^2}=\frac{125+\sqrt{73}}{432}.$$
\end{conjecture}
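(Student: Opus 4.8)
The inequality ``$\le$'' in Conjecture \ref{con1} is already contained in Langer's estimate (\ref{eq:langer}); what remains, and what carries the entire difficulty, is the matching lower bound, namely to exhibit a sequence of cuspidal plane curves $C_d\subset\PP^2$ of degree $d\to\infty$ whose number of cusps $k(d)$ satisfies $k(d)/d^2\to\frac{125+\sqrt{73}}{432}=0.3091\ldots$. The best constructions presently available (Hirano \cite{HA}, Kulikov \cite{Ku}, Calabri et al. \cite{CPS}) only reach the ratio $\frac{2567}{8640}=0.2971\ldots$, so the plan is purely constructive and must close this gap. My guiding principle is the equality case of the underlying orbifold Bogomolov--Miyaoka--Yau inequality.

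The starting observation is that the one-parameter family (\ref{elan1}) is precisely the orbifold Bogomolov--Miyaoka--Yau inequality $\bar c_1^2\le 3\bar c_2$ applied to the log surface carrying an orbifold weight $\alpha$ along $C$, and that $\alpha=\frac{\sqrt{73}-1}{24}$ is the weight yielding the sharpest cusp bound (\ref{eq:langer}). Since equality in orbifold BMY is attained only by quotients of the complex $2$-ball, any extremal sequence must be \emph{asymptotically log-ball-quotient}, with the cusps contributing the local orbifold terms dictated by this $\alpha$. The first step is therefore to translate the extremal condition into the requirement that a suitable abelian cover of $\PP^2$ branched along $C$, with ramification tuned to the cusp structure, be (asymptotically) a ball quotient. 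The second step is to search for sequences realizing this, along one of two routes: either (a) refine the iterated covering constructions of Section \ref{ssec:2.2}, replacing the base sextic with nine cusps by a more efficient cuspidal configuration and optimizing the covering degrees so that each step drives the cusp-to-degree-squared ratio toward the ball-quotient value; or (b) start directly from known ball-quotient surfaces (Hirzebruch-type Kummer covers of line and curve arrangements) and project them to $\PP^2$ so that the branch locus is a cuspidal curve with the prescribed local invariants, verifying the limit ratio by computing the orbifold Euler numbers and checking BMY equality in the limit.

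The main obstacle, and the reason the problem is genuinely open, is the rigidity of the equality case. Ball quotients form a very thin, essentially discrete class of surfaces, and there is no known flexible infinite family whose projections to $\PP^2$ produce cuspidal branch curves with cusp ratio tending to the Langer value. Worse, the optimizing $\alpha$ arises from a formal optimization of a numerical inequality and need not equal the log-canonical threshold of any honest orbifold, so it is conceivable that no configuration saturates the bound and that the true supremum is strictly smaller---that is, the conjecture may well be false. A realistic strategy therefore advances on two fronts at once: on the constructive side, push the covering and patchworking methods past the $0.2971$ barrier while tracking the orbifold Euler number to measure how closely one can approach BMY equality; on the restrictive side, seek refinements of Langer's orbifold estimate---sharper local contributions at cusps, or global corrections reflecting the geometry of large cuspidal configurations---that might instead \emph{lower} the target coefficient, so that an improved construction and an improved bound meet in the middle.
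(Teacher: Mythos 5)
The statement you are addressing is a \emph{conjecture} (attributed to Langer), and the paper offers no proof of it: the survey only records the upper bound (\ref{eq:langer}), coming from Langer's orbifold Bogomolov--Miyaoka--Yau estimate (\ref{elan1}) optimized at $\alpha=\frac{\sqrt{73}-1}{24}$, and the best known lower bound $\lim_k s_k/d_k^2=\frac{2567}{8640}=0.2971\ldots$ from the Calabri--Paccagnan--Stagnaro refinement of Hirano's and Kulikov's covering constructions. Your write-up correctly identifies exactly this state of affairs, and your honesty that the matching construction is missing --- and that the conjecture could even be false if no configuration saturates the orbifold inequality --- is consistent with the paper's presentation.

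That said, what you have written is a research program, not a proof, and the gap is the entire substance of the statement: no sequence of curves $C_d$ with $k(d)/d^2\to\frac{125+\sqrt{73}}{432}$ is produced, and neither of your two proposed routes is carried out. Route (a) (re-optimizing the iterated Kummer-type covers) is precisely what Kulikov and Calabri et al.\ already pushed to $0.2971\ldots$, and there is no argument that further refinement can cross that barrier; route (b) (projecting ball quotients to $\PP^2$) founders on the rigidity you yourself point out --- ball quotients do not come in the flexible infinite families needed to realize an asymptotic statement, and the optimizing weight $\alpha$ need not correspond to an actual orbifold structure. So the proposal should be read as a correct framing of why Conjecture \ref{con1} is open, together with a plausible but unexecuted strategy; it does not establish the claimed equality, and indeed nothing in the current literature does.
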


Concerning the maximal number $k_{\max,\R}(d)$ of real cusps on a real plane curve of degree $d$, the best existence result is Theorem \ref{t-nodecusp}. We conjecture that the coefficient of $d^2$ in (\ref{eq:realcusp}) is sharp, i.e.,

\begin{conjecture}\label{con2}
$$\lim_{d\to\infty}\sup\frac{k_{\max,\R}(d)}{d^2}=\frac{1}{4}.$$
\end{conjecture}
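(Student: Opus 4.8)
The plan is to prove the two inequalities $\limsup_{d\to\infty}\frac{k_{\max,\R}(d)}{d^2}\ge\frac14$ and $\limsup_{d\to\infty}\frac{k_{\max,\R}(d)}{d^2}\le\frac14$ separately, the first being a construction and the second an obstruction. For the lower bound there is nothing to do beyond quoting Theorem \ref{t-nodecusp}(1): for every $d\ge3$ it produces a real plane curve of degree $d$ carrying $\left[\frac{d^2-3d+4}{4}\right]$ real cusps and nothing else. Hence $k_{\max,\R}(d)\ge\frac{d^2-3d+4}{4}-1$, and dividing by $d^2$ and letting $d\to\infty$ gives already $\liminf_{d\to\infty}\frac{k_{\max,\R}(d)}{d^2}\ge\frac14$, a fortiori the desired lower bound for the $\limsup$.

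The whole difficulty is the matching upper bound $\limsup\le\frac14$, which is precisely the content of the conjecture. The complex bounds recalled in Section \ref{ssec:2.2} are not enough: since real cusps are in particular cusps, Langer's inequality (\ref{eq:langer}) only yields $\limsup\le\frac{125+\sqrt{73}}{432}=0.3091\ldots$, and neither Hirzebruch--Ivinskis nor the spectral estimate does better. One therefore needs a genuinely real argument, and the numerology indicates where it should come from: the target coefficient $\frac14$ is exactly half of the coefficient $\frac12$ of the complex genus/Milnor bounds for cusps, so the strategy I would pursue is to install a real ``halving'' on top of the complex deficiency count.

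Two concrete implementations seem plausible. First, via duality: by the Pl\"ucker correspondence a cusp of $C$ is a flex of the dual curve $C^*$, and a real cusp is a real flex; one would try to bound the number of real flexes of the real curve $C^*$ of class $d^*=d(d-1)-3k$ (cf. (\ref{e304})) by a real Klein/Pl\"ucker-type inequality, exploiting that real inflections satisfy constraints sharper than their complex count. Second, via branched covers: for even $d$, pass to the double cover $X\to\PP^2$ branched along $C$, resolve the $A_2$-points, and feed the resulting real surface into a Smith--Thom/Harnack estimate $\sum_i b_i(X(\R);\Z/2)\le\sum_i b_i(X(\CC);\Z/2)$. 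This is exactly the mechanism that produced the sharp bound $s_{\max,\R}(6)=7$ through the moduli of real K3 surfaces, and the hope is that its asymptotic form supplies the missing factor $\frac12$.

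The main obstacle is precisely this upper bound, which is why the statement remains only a conjecture. The complex techniques are insensitive to the real structure, while the available real inputs (Smith--Thom, real Klein formulas, Viro combinatorics) have so far resisted being sharpened to the constant $\frac14$: real cusps can be distributed over many ovals and connected components in ways that are hard to control uniformly in $d$, and the real orbifold-BMY machinery that one would want on the branched cover is not available in the asymptotically sharp form required. I would expect that any successful proof must, as in the sextic case, combine the global real topology of the associated K3 (or general-type) cover with a deformation argument showing that the configurations saturating the complex bound cannot be realised with all cusps real.
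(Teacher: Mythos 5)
This statement is a \emph{conjecture} in the paper, so there is no proof of it in the text to compare against; the only rigorous content the authors attach to it is the lower bound, and your treatment of that part is correct and coincides with theirs. Quoting Theorem \ref{t-nodecusp}(1) gives $k_{\max,\R}(d)\ge\left[\frac{d^2-3d+4}{4}\right]$, hence $\liminf_{d\to\infty}k_{\max,\R}(d)/d^2\ge\frac14$ and a fortiori the same for the $\limsup$; this is exactly the paper's reason for conjecturing that the coefficient of $d^2$ in (\ref{eq:realcusp}) is sharp. You are also right that the known upper bounds are all insensitive to the real structure: since every real cusp is a cusp, the best one can currently extract is Langer's bound (\ref{eq:langer}), namely $\limsup k_{\max,\R}(d)/d^2\le\frac{125+\sqrt{73}}{432}=0.3091\ldots$, which falls short of $\frac14$.

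The genuine gap is therefore the entire upper bound $\limsup_{d\to\infty}k_{\max,\R}(d)/d^2\le\frac14$, and you acknowledge this honestly rather than papering over it. Your two proposed mechanisms --- a real Klein/Pl\"ucker-type count of real flexes on the dual curve of class $d^*=d(d-1)-3k$ as in (\ref{e304}), and a Smith--Thom estimate on the resolved real double cover in the spirit of the proof that $s_{\max,\R}(6)=7$ --- are plausible places to look, but neither is carried out, and there is no evidence in the paper (or elsewhere) that either can be pushed to the asymptotically sharp constant $\frac14$; in particular the K3 argument for sextics is a delicate moduli-space analysis with no known asymptotic analogue for covers of general type. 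So the proposal should be read as: a correct proof of the easy half, plus a correct diagnosis that the other half is open. It does not prove the conjecture, and no proof is currently available.
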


The following version of the problem was pointed by Vik. Kulikov. Choose an almost complex structure on the plane tamed by the standard symplectic structure.
\medskip

\smallskip\noindent{\it Question 1

What is the maximal number of cusps of a pseudo-holomorphic plane curve of degree $d$? Does there exists a cuspidal plane pseudoholomorphic curve of degree $d$ with the number of cusps breaking consequences of the Bogomolov-Miyaoka-Yau inequality (e.g., the Hirzebruch-Ivinskis bound (\ref{eq:Iv2}))?}

\smallskip
The latter question reflects the fact that there is no analogue of the Bogomolov-Miyaoka-Yau inequality for symplectic fourfolds.

\medskip\noindent{\bf Reducible equisingular families of plane curves.} In contrast to the sufficient $T$-smoothness conditions of equisingular families of plane curves, which were shown to be sharp (or close to sharp) in several important cases (see Section \ref{ssec:smoothirred}), the known examples of reducible equisingular families like in Theorem \ref{t-red} are very far from the available general sufficient irreducibility conditions, which consist of three inequalities (see \cite[Theorem 4.6.4]{GLS3})
\begin{align}\max_{1\le i\le r}\nu'(S_i)&\le\frac{2}{5}d-1,\nonumber\\
\sum_{i=1}^r(\nu'(S_i)+2)^2&<\frac{9}{10}d^2,\nonumber\\
\frac{25}{2}\cdot\#(\text{nodes})+18\cdot\#(\text{cusps})+\sum_{S_i\ne A_1,A_2}(\tau'(S_i)+2)^2&<d^2,\label{eq:irr}\end{align}
where the singularity invariants $\nu'$ and $\tau'$ are of the order of the Tjurina number $\tau$, and hence the coefficients assigned to the singularities in (\ref{eq:irr}) are of order $\tau^2$. For instance, an ordinary $m$-fold singular point (considered up to topological equivalence) enters the left-hand side of (\ref{eq:irr}) with the coefficient $\frac{1}{4}m^2(m+1)^2$ (see \cite[Corollary 4.6.7]{GLS3}), while in the series of reducible equisingular families of curves with ordinary singularities from Theorem \ref{t-red}(3), the ratio of $d^2$ to the number of ordinary $m$-fold singularities does not exceed $(m+1)^2$. This leaves completely open the following question
\medskip

\smallskip\noindent{\it Question 2

How sharp are the sufficient irreducibility conditions (\ref{eq:irr})?}
\smallskip

Another specific feature of the examples in Theorem \ref{t-red}, namely, the fact that the curves in different components of the equisingular family have the same fundamental group of the complement (i.e., form a so-called {\em anti-Zariski pair} \index{pair!anti-Zariski}) raises the following interesting question:
\medskip

\smallskip\noindent{\it Question 3

Can the curves in an anti-Zariski pair be transferred to each other by a homeomorphism of the plane onto itself?}

\medskip\noindent{\bf Sharpness of restrictions to curves with arbitrary singularities.} We have discussed above the sharpness of the known restrictions, notably, of Langer's bound in the case of curves with ordinary cusps. On the other hand, in Section \ref{ssec:3.2} we have seen that, for $A_n$ singularities,  with large Milnor number $n$, Hirano's examples (Theorem \ref{t-An} and Remark \ref{r-hirano}) have almost the same asymptotics as the spectral bound does. Beyond the range of simple or ordinary multiple singularities, the spectral bound and the genus and Pl\"ucker formulas are the only universal bounds applicable to arbitrary singularities, and the spectral bound is much stronger than the genus and Pl\"ucker bounds. So, it is natural to ask
\medskip

\smallskip\noindent{\it Question 4

For which singularity types (say, semiquasihomogeneous, irreducible, etc.) with large Milnor numbers is the spectral bound (asymptotically) sharp, or almost sharp?}

As said above, so far this is known to be true only for $A_n$ singularities.
\medskip

\medskip\noindent{\bf Gromov-Witten invariants of rational surfaces.}
Let $\PP^2_r$ be the plane blown up at $r>0$ generic points. For $r\le 9$, we know a complete answer about the existence of nodal curves of arbitrary genus in an arbitrary linear system on $\PP^2_r$ (see Theorem \ref{t-others}(2)).

If $r>9$, one can find in the literature only partial answers, see \cite[Theorem 5 and Corollary 3.1.7]{GLS2} or
\cite[Theorem 4.5.30 and Corollary 4.5.31]{GLS3}. For a divisor class $D\in\Pic(\PP^2_r)$, the expected dimension
of the moduli space ${\mathcal M}_{0,g}(\PP^2_r,D)$ of stable maps of (unmarked) curves of genus $g$ to $\PP^2_r$ representing the class $D$ equals (cf. \cite{FP})
$$-DK_{\PP^2_r}+g-1\ .$$
The following question arises
\medskip

\smallskip\noindent{\it Question 5

Suppose that $r>9$ and $D\in\Pic(\PP^2_r)$ satisfies the conditions $-DK_{\PP^2_r}>0$ and $D^2\ge-1$. Does there exist a nodal rational curve $C\in|D|$?}

\medskip
The restriction $D^2\ge-1$ comes from the fact that $\PP^2_r$ does not contain $(-k)$-curves with $k>1$. The above question is directly related to the non-vanishing of genus zero Gromov-Witten invariants of $\PP^2_r$: it is shown in \cite[Theorem 4.1 and Section 5.2]{GP} that these Gromov-Witten invariants do count rational curves in $|D|$ if either $-DK_{\PP^2_r}>1$, or $d\le10$, or some $d_i$ equals $1$ or $2$, where $D=dL-d_1E_1-...-d_rE_r$ ($L$ being the lift of a generic line in $\PP^2$, $E_1,...,E_r$ the exceptional divisors of the blowing up). We note also that, in view of the condition
$-DK_{\PP^2_r}>0$, an affirmative answer to Question 5 yields the existence of a nodal curve $C'\in|D|$ with any nonnegative number of nodes fewer than for the rational curve $C$; hence, the nonvanishing of the corresponding Gromov-Witten invariants of positive genus. Furthermore, Question 5 can be extended in the following way:
\medskip

\smallskip\noindent{\it Question 5'

Suppose that $r>9$, $g\ge0$, and $D\in\Pic(\PP^2_r)$ satisfies the conditions $-DK_{\PP^2_r}+g>0$ and $D^2>0$. Does there exist a curve $C\in|D|$ of genus $g$? What is the enumerative meaning of the corresponding genus $g$ Gromov-Witten invariants of $\PP^2_r$?}\\

{\bf Acknowledgement: }
The second author was supported by the Israel Science Foundation grant no. 501/18 and by the Bauer-Neuman Chair in Real and Complex Geometry.

\bigskip

{\footnotesize
\noindent Gert-Martin Greuel\\
Technische Universität Kaiserslautern, Germany, e-mail: gm.greuel@gmail.com\\
Eugenii Shustin\\
Tel Aviv University, Israel, e-mail: shustin@tauex.tau.ac.il
}
\printindex
\end{document}